\documentclass[11pt]{article}
\usepackage{graphicx}
\usepackage{amsmath,amssymb,amsthm,amsfonts,mathrsfs}
\usepackage{hyperref}
\usepackage[numbers,sort&compress]{natbib}
\usepackage{amssymb}

\usepackage{txfonts}
\usepackage{mathrsfs}
\usepackage{amssymb}
\usepackage{amsmath}
\usepackage{amsfonts}
\usepackage{amsfonts,enumerate}
\usepackage{pifont}
\usepackage{enumerate}
\usepackage{graphics}
\usepackage{verbatim}
\usepackage{amsthm}
\usepackage{amsmath}
\allowdisplaybreaks

\newtheorem{thm}{Theorem}[section]

\newtheorem{cor}[thm]{Corollary}
\newtheorem{lem}[thm]{Lemma}

\theoremstyle{definition}

\theoremstyle{remark}
\newtheorem{rem}{Remark}[section]

\numberwithin{equation}{section}

\DeclareMathSymbol{\C}{\mathalpha}{AMSb}{"43}

\newcommand{\N}{{\mathbb N}}
\newcommand{\R}{{\mathbb{R}}}

\newcommand{\bsub}{\begin{subequations}}
\newcommand{\esub}{\end{subequations}$\!$}

\begin{document}

\title{Existence of solution for Schr\"odinger-Newton system with vanishing magnetic potential
%\thanks {This work is supported by NSFC (No.12471107).}
}

\author{Liying Shan \thanks{School of Mathematics and Statistics, Hubei Key Laboratory of Mathematics \& Intelligent Computation and Interdisciplinary Sciences, Wuhan University of Technology, Wuhan 430070, P. R. China. Email: \texttt{shanly@whut.edu.cn}}
\qquad Wei Shuai \thanks{School of Mathematics and Statistics \& Key Laboratory of Nonlinear Analysis and Applications (Ministry of Education), Central China Normal University, Wuhan 430079, P. R. China. Email: \texttt{wshuai@ccnu.edu.cn}}
\qquad Ping Yang \thanks{School of Mathematics and Statistics, Central China Normal University, Wuhan 430079, P. R. China. Email: \texttt{yangping0427@163.com}}
\qquad Jianghua Ye \thanks{School of Mathematics \& Center for Applied Mathematics of Guangxi, Guangxi University, Nanning 530004, P. R. China. Email: \texttt{jhye@gxu.edu.cn}}
}

%\date{\date}

\smallbreak \maketitle

\begin{abstract}
In this paper, we study the following Schr\"odinger-Newton system with a vanishing magnetic potential and an electric potential
\begin{equation}\label{0.1}
\left\{
 \renewcommand{\arraystretch}{1.25}
 \begin{array}{ll}
 (-i\nabla +A(x))^2u+V(x)u=\Phi u, \ & x\in\R^3,\\
-\Delta \Phi =|u|^2, \ & x\in\R^3,
 \end{array}
 \right.
\end{equation}
where $i$ is the imaginary unit, $A:\R^3\to \R^3$ is a continuous magnetic potential, and $V:\R^3\to \R$ is a continuous electric potential. 
First, we show that the ground state energy of system \eqref{0.1} with $A(x)\equiv0$ and $V(x)\equiv V_{\infty}>0$ is isolated. Using this fact together with a global compactness lemma, a barycenter function, and Brouwer degree theory, we prove that under suitable conditions on $A$ and $V$, system \eqref{0.1} admits either a ground state solution or a high energy solution.
Interestingly, whether a ground state solution exists is significantly affected by the magnetic potential $A$ and the electric potential $V$. Furthermore, we provide sufficient conditions that guarantee the existence of a ground state solution, as well as conditions under which no ground state solution exists and a high energy solution appears instead.
%we give sufficient conditions under which system \eqref{0.1} admits a ground state solution, as well as sufficient conditions under which system \eqref{0.1} admits no ground state solution but possesses a high energy solution.
%When the magnetic potential vanishe and 
%When $A$ and $V$ satisfy certain conditions, we obtain that system \eqref{0.1} either has a ground state solution or a high energy solution by using variational methods.
%Furthermore,
%whether a ground state solution exists is significantly affected by the relationship between the potentials 
%and their limits.
\end{abstract}

\vskip 0.05truein

\noindent {\bf Keywords:} Schr\"odinger-Newton system; vanishing magnetic; ground state solution; high energy solution.

\vskip 0.05truein

\noindent {\bf 2020 AMS Subject Classification:} 35J20, 35J60, 35Q40.
%\vskip 0.08truein
%
%\noindent {\bf Data Availability Statement:}
%The data that supports the findings of this study are available within the article.

\vskip 0.05truein

\section{Introduction}
The present paper is concerned with the following nonlinear Schr\"odinger-Newton system with a magnetic potential and an electric potential
\begin{equation}\label{eqs1.1}
 \left\{
 \renewcommand{\arraystretch}{1.25}
 \begin{array}{ll}
 (-i\nabla +A(x))^2u+V(x)u=\Phi u, \ & x\in\R^3,\\
-\Delta \Phi =|u|^2, \ & x\in\R^3,
 \end{array}
 \right.
\end{equation}
where $i$ is the imaginary unit, $A:\R^3\to \R^3$ is a continuous magnetic potential, and $V:\R^3\to \R$ is a continuous electric potential. System \eqref{eqs1.1} originates from the study of physical phenomena in quantum mechanical systems that are subject to both nonlocal interactions and external electromagnetic fields \cite{FG,GHW}. For more physical interpretations and applications of the magnetic Schr\"odinger-Newton system, we refer the reader to \cite{Lions,J} and the references therein.

%Under the influence of the magnetic field $B=\nabla \times A$, %using the principle of minimal coupling, 
%the ordinary gradient operator $\nabla$ is replaced by the covariant derivative $\nabla_A:=-i\nabla +A(x)$, and the magnetic Schrödinger operator is defined by  
%\begin{align*}
%     -\Delta_A=(-i\nabla +A(x))^2=-\Delta-2iA(x)\cdot \nabla -i\operatorname{div} A(x)+|A(x)|^2.
    %(-i\nabla +A(x))^2=-\Delta-2iA(x)\cdot \nabla -i\operatorname{div} A(x)+|A(x)|^2.
%\end{align*}
%More discussion of the Schrödinger Hamiltonian $ (-i\nabla +A(x))^2$ may be found in \cite{A,La,Mi}.
% Furthermore, equation \eqref{eqs1.1} has a profound physical background and is used to describe the dynamics of an electrically charged particle in a magnetic field. 
% Specifically, it was first used to describe the quantum mechanics of a polaron at rest by Pekar \cite{P} in 1954. 
% In 1976, Choquard used it to describe an electron trapped in its own hole\cite{L}. 
% This equation also arises in many interesting situations related to the quantum theory of large systems of nonrelativistic bosonic atoms and molecules, see \cite{LS}. It was also used by Penrose \cite{Pen} to describe a nonlinear system obtained by coupling the linear Schr\"odinger equation of quantum mechanics with the gravitational law of Newtonian mechanics.  For more details, see \cite{A,Mi,Me,La} and their references. 

%and other background, see \cite{A,Mi,Me,La} and their references.

It is well-known that, for $u\in L^{\frac{12}{5}}(\mathbb{R}^3,\mathbb{C})$, by the Lax-Milgram theorem, the second equation of system \eqref{eqs1.1} has a unique solution $\Phi_{|u|}\in D^{1,2}(\mathbb{R}^3,\R)$, which can be written as 
\begin{align*}%\label{eq1.2}
        \Phi_{|u|}(x)=\frac{1}{4\pi}\int_{\mathbb{R}^3}\frac{|u(y)|^{2} }{|x-y|}dy.
\end{align*}
Then, system \eqref{eqs1.1} can be transformed into the following single equation with a nonlocal term
\begin{equation}\label{eqs1.1.0}
 (-i\nabla +A(x))^2u+V(x)u=\Phi_{|u|} u, \  x\in\R^3.
\end{equation}

If $A(x)\equiv0$, equation \eqref{eqs1.1.0}  reduces to 
\begin{equation}\label{1.2}
       -\Delta u+V(x)u=\Phi_{|u|} u, \  x\in\R^3.
\end{equation}
Equation \eqref{1.2} has a profound physical background. It was used to describe the quantum mechanics of a polaron at rest \cite{P} and the quantum theory of large systems of nonrelativistic bosonic atoms and molecules \cite{LS}. For more background, see \cite{Pen} and the references therein.
% Equation \eqref{1.2} was first used to describe the quantum mechanics of a polaron at rest by Pekar \cite{P} in 1954. In 1976, Choquard used the equation \eqref{1.2} to describe an electron trapped in its own hole\cite{L}. This equation also arises in many interesting situations related to the quantum theory of large systems of nonrelativistic bosonic atoms and molecules, see \cite{LS}. It was also used by Penrose \cite{Pen} to describe a nonlinear system obtained by coupling the linear Schr\"odinger equation of quantum mechanics with the gravitation law of Newtonian mechanics.
 Equation \eqref{1.2} has been widely investigated under various assumptions on the electric potential $V$.   %and its solution is usually considered to be real-valued.
% It is well-known that system \eqref{1.2} can be transferred to a single equation with a nonlocal term. In fact, for each $u\in L^{\frac{12}{5}}(\R^3,\mathbb{C})$, according to the Lax-Milgram theorem, there exists a unique $\Phi_{|u|}\in D^{1,2}(\mathbb{R} ^3)$ such that $-\Delta \Phi_{|u|}=|u|^{2}$, and
% \begin{align}\label{1.4}
%         \Phi_{|u|}(x)=\frac{1}{4\pi}\int_{\mathbb{R}^3}\frac{|u(y)|^{2} }{|x-y|}dy,
% \end{align}
% where $H^1(\R^3)$ is the usual Sobolev space equipped with the inner product and norm
% \[
% (u,v)_{H^1}=\ds\int_{\R^3}(\nabla u \nabla v+uv)dx,\ \ \|u\|=(u,u)_{H^1}^{\frac{1}{2}}.
% \]
 % substituting \eqref{1.4} into equation \eqref{1.2}, one obtains
 % \begin{align}\label{1.5}
 %         -\Delta u+u=\Phi_{|u|} u,\ \  u\in H^1(\R^3,\R).
 % \end{align}
% In the past several decades, equation \eqref{1.2} has been extensively investigated by many researchers.
For example, in the special case $V(x)\equiv1$, equation \eqref{1.2} becomes 
 \begin{equation}\label{1.5}
        -\Delta u+u=\Phi_{|u|} u, \  x\in\R^3,
 \end{equation}
 and the real-valued solutions of equation \eqref{1.5} have received particularly extensive attention in the literature.
 By using variational methods, Lieb \cite{L}, Lions \cite{Lions}, and Tod and Moroz \cite{TM} obtained the existence and uniqueness of the ground state solution for equation \eqref{1.5}.
%The existence and uniqueness of ground state solutions for equation \eqref{1.5} were obtained by Lieb \cite{L}, Lions \cite{Lions} and Tod-Moroz \cite{TM} using variational methods.
In \cite{MZ}, Ma and Zhao proved that every positive solution of equation \eqref{1.5} is radially symmetric and monotone decreasing about some fixed point by employing moving plane methods. Combining this radial symmetry  with the results of Lieb \cite{L}, Ma and Zhao subsequently established the uniqueness of the positive solution for equation \eqref{1.5}.
We refer the reader to \cite{DHL,LPW,SS,GLWY,WW} and the references therein for further results on equation \eqref{1.2}.

Obviously, equation \eqref{1.5} is a special case of the following Choquard equation
\begin{equation}\label{1.6}
        -\Delta u+u=(I_{\alpha}*|u|^p)|u|^{p-2} u, \ x\in\R^N,
\end{equation}
where $N\ge 1$, $\alpha\in (0,N)$, $p\in(1,+\infty)$ %$p\in(\frac{N+\alpha}{N},\frac{N+\alpha}{N-2})$ $\frac{1}{p}\in(\frac{N-2}{N+\alpha},\frac{N}{N+\alpha})$ 
and $I_{\alpha}$ is the Riesz potential defined by
\begin{align*}%\label{i}
        I_{\alpha}(x)=\frac{\varGamma(\frac{N-\alpha}{2})}{\varGamma(\frac{\alpha}{2}) \pi^{\frac{N}{2}} 2^{\alpha} |x|^{N-\alpha}},
\end{align*}
and $\varGamma$ is the Gamma function, see \cite{R}.
In \cite{MV}, Moroz and Van Schaftingen demonstrated the
existence of a ground state solution for equation \eqref{1.6} with 
  $\frac{1}{p}\in(0,1)\cap (\frac{N-2}{N+\alpha},\frac{N}{N+\alpha})$. %$p\in(\frac{N+\alpha}{N},\frac{N+\alpha}{N-2})$.
%They also %established both the regularity and positivity of this ground state solution, 
% provd that all positive ground state solutions are radially symmetric and exhibit monotone decay around a certain point. Furthermore, they derived the decay asymptotics of the ground state solution as $|x|$ approaches infinity.
Under the conditions that $N=3$, $\alpha=2$, $p>2$, and $p$ is sufficiently close to 2, Xiang \cite{X} established the uniqueness and nondegeneracy of the ground state solution for equation \eqref{1.6}.
For additional results on the Choquard equation, we refer the reader to \cite{MV1,MV3,CL,GHPS} and the references therein.

%Furthermore, if the potential is not constant and satisfies certain conditions, there are also many results.
%In \cite{CL}, Cao and Li studied the Choquard equation with the potential on the right for $\alpha=p=2$ and $N=3,4,5$. Under certain assumptions, they obtained that the ground state of the Choquard equation does not exist. However, by global compactness analysis, they proved that there exists a positive high energy solution, and the energy of the positive high energy solution is between one and two times that of the ground state solutions of the limit equation.
 
On the other hand, the Schr\"odinger-Newton system with magnetic was investigated in \cite{CC2,CSS,LiB2,LL1}. In \cite{CSS}, Cingolani et al. studied the semi-classical regime for 
 \begin{equation}\label{1.7}
      (-\varepsilon i\nabla -A(x))^2u+V(x)u=\frac{1}{\varepsilon ^2}\Phi_{|u|} u, \  x\in\R^3,
 \end{equation}
 under the assumptions that the magnetic potential $A$ is of class $C^1$ and the electric potential $V$ is a continuous function with several local minima. They proved that for any sufficiently small $\varepsilon>0$, equation \eqref{1.7} possesses a solution having multiple concentration regions which are located around the minimum points of the electric potential $V$.
 In \cite{CC2}, the authors considered equation \eqref{1.7} with $A$ and $V$ compatible with the action of a group $G$ of linear isometries of $\R^3$. They showed that, for
any given homomorphism $\tau: G\to \mathbb{S}^1$ (the unit complex numbers), there is a combined effect of the symmetries and the potential $V$ on the number of semiclassical solutions $u$ satisfying $u(gx)=\tau(g)u(x)$ for all $g\in G$, $x\in\R^3$.
% If $A(x) \nequiv 0$, there are also many results on Choquard equations. 
 %Moreover, %for Choquard equation, 
 %For $p=2$ and $N=3$, 
 Li, Lu and Luo \cite{LL1} investigated normalized solutions of the magnetic Schr\"odinger-Newton system, namely,
 \begin{equation}\label{e1}
      \left\{
 \renewcommand{\arraystretch}{1.25}
 \begin{array}{ll}
  - (\nabla -iA(x))^2u+V(x)u=\mu u+\Phi_{|u|} u, \ x\in\R^3,\\
     \int_{\R^3}|u|^2dx=m>0,
 \end{array}
 \right.
 \end{equation}
 where $V\in L^{\infty}(\R^3)$ and $A(x)=\frac{b}{2}(-x_2,x_1,0)$ with $b\in \R$. Under certain assumptions on $V$, they proved the existence of ground state solutions to equation \eqref{e1} by employing the concentration compactness principle. Furthermore, they provided a refined description of the asymptotic behaviour of ground states as $m\to +\infty$. %Additionally, they concluded that there exists a unique ground state, (up to a constant phase), which must be real-valued and cylindrically symmetric when $V(x)$ is cylindrically symmetric and $m$ is sufficiently large. 
%Salazar \cite{Sa} researched the following nonlinear equation
 % \begin{equation}\label{e3}
%  -\Delta_A u+(V(x)+V_{\infty})u=\left(\frac{1}{|x|^{\alpha}}*|u|^p\right)|u|^{p-2} u, \ x\in\R^N,
% \end{equation}
 %where $N\ge 3$, $\alpha\in (0,N)$, $p\in [2,\frac{2N-\alpha}{N-2})$, $A:\R^N\to \R^N$ is a vector potential, $V:\R^N\to \R$ is a scalar potential. 
 %For a given group $\Gamma$ of linear isometries of $\R^N$, Salazar assumed that $A(gx)= gA(x)$  and $V(gx) = V(x)$ for all $g \in \Gamma,\  x \in \R^N$. Under some assumptions on the decay of $A(x)$ and $V(x)$ at infinity, he established the existence of vortex-type solution $u$ to equation\eqref{e3} which satisfies
%$u(\gamma x) = \phi (\gamma)u(x)$ for all $\gamma \in \Gamma,\  x \in \R^N$,
%where $\phi:\Gamma\to S_1$ is a given continuous group homomorphism into the unit complex numbers.
 % Bueno et al.
 %Mamani, and Pereira 
% \cite{Bu} researched the following nonlinear Choquard equation
% \begin{equation}\label{e2}
%  -\Delta_A u+V(x)u=\left(\frac{1}{|x|^{\alpha}}*F(|u|)\right)\frac{f(|u|)}{|u|} u, \ x\in\R^N,
% \end{equation}
%where
%$\alpha\in(0,N)$, $A:\R^N\to \R^N$ is a vector potential, $V:\R^N\to \R$ is a scalar potential, 
%$f:\R\to\R $ and $F$ is the primitive of $f$. 
% Under mild hypotheses, they proved the existence of a ground state solution for equation \eqref{e2}. They also proved a simple multiplicity result by applying Ljusternik–Schnirelmann methods.
 For results about the Choquard equation with magnetic potential, readers can see \cite{CC,J,B1,CW,CS} and the references therein.

Recently, Li and Wei  \cite{LiB} studied the following Schr\"odinger  equation
 \begin{align}
     \label{01}
     (-i\nabla +A(x))^2u+V(x)u=|u|^{q-2}u, \  x\in\R^N,
 \end{align}
 where $N\ge 3$, $q\in(2,\frac{2N}{N-2})$, $A$ and $V$ satisfy the following conditions: 
 
 $(A)$\ \ $A\in C(\R^N,\R^N)$, $\lim \limits_{|x|\to +\infty} A(x)=0$;
 
 $(V)$\ \ $V\in C(\R^N,\R)$, $V_0:=\inf \limits_{x\in \R^N} V(x)>0$, $\lim \limits_{|x|\to +\infty} V(x)=V_{\infty}$;\\
 and some additional conditions.
  % $(H)$\ \   $V_{\infty}<|A(x)|^2+V(x)< V_{\infty}+c_0 $, where $c_0$ is a constant that is related to the ground state energy of the limiting equation.
 % there exist $c_0>0$ and $k_0>2\sqrt{V_{\infty}}$ such that 
 % $$
 % |A(x)|^2+V(x)\le V_{\infty}+c_0e^{-k_0|x|}\  \text{for} \ x \in\R^N. 
 % $$
Then, by using the variational methods, they obtained that the equation \eqref{01} either has a ground state solution or has a high energy solution.  
% The result differs from previous ones, which were determined to be either ground state solutions or high energy solutions, and the energy of the high energy solution was between one and two times that of the ground state solution of the limiting equation.
As far as we know, there seems to be no relevant result for the Schr\"odinger-Newton system with magnetic potential.

Inspired by the above research, in this paper, we study the existence of nontrivial solutions to equation \eqref{eqs1.1.0} under the condition that the potentials $A$ and $V$ satisfy $(A)$ and $(V)$ with $N=3$. 
It is worth noting that for the real-valued case (see \cite{BL,C,CV,GHPS}), the uniqueness of the positive solution to the limit equation is crucial for establishing the existence of high energy solutions. However, for the complex-valued case, in order to obtain high energy solutions, we need the isolation of the ground state energy of the limit equation.
%\vskip 0.08 truein
 % Moreover, it is worth noting that for the real-valued case, likely following \cite{CL}, the uniqueness of the positive solution of the limit equation is crucial for establishing the existence of high energy solutions. For the complex-valued case, in order to obtain high energy solutions, we need the isolation of the ground state energy of the limit equation. 
 The limit equation of equation \eqref{eqs1.1.0} is as follows
%in order to obtain the existence of nontrivial solutions of equation \eqref{eqs1.1.0}, we need to discuss the properties of the following limit equation
\begin{equation}\label{1.8.2}
        \left\{
 \renewcommand{\arraystretch}{1.25}
 \begin{array}{ll}
 -\Delta u+V_{\infty}u=\Phi_{|u|} u, \ x\in \R^3,\\
u\in H^1(\mathbb{R}^3,\mathbb{C}),
 \end{array}
 \right.
\end{equation}
where
\begin{align*}
    H^1(\R^3,\mathbb{C}):=\left\{u\in L^2(\R^3,\mathbb{C})\ |\ \nabla u\in L^2(\R^3,\mathbb{C}^3)\right\}
  \end{align*}  
  is a Hilbert space equipped with the inner product
  \begin{align*}
    (u,\varphi ):=\mathrm{Re}\left(\int_{\R^3}\left(\nabla u \overline{\nabla \varphi}+V_{\infty}u\overline{\varphi}\right)dx\right),
\end{align*}
and the norm $\|u\|=(u,u)^{\frac{1}{2}}$.
 
 However, to the best of our knowledge, there is no result concerning the isolation of the ground state energy of the equation \eqref{1.8.2}.
Therefore, the first objective of this paper is to fill this gap, namely, to prove the isolation of the ground state energy of the equation  \eqref{1.8.2}. 
For this purpose, define the energy functional of equation \eqref{1.8.2} by
\begin{align*}
        I_{\infty}(u):=\int_{\R^3}\left[\frac{1}{2}\big(|\nabla u|^2+V_\infty|u|^2 \big)-\frac{1}{4}\Phi_{|u|} |u|^2\right]dx,\ \ u\in H^1(\mathbb{R}^3,\mathbb{C}).
\end{align*}
It is obvious that $I_{\infty}$ is well-defined in $H^1(\mathbb{R}^3,\mathbb{C})$ and belongs to $C^1(H^1(\mathbb{R}^3,\mathbb{C}),\mathbb{R})$. Moreover, the Gateaux derivative of $I_{\infty}$ is given by
\begin{align*}
  \langle I'_{\infty}(u),\varphi\rangle=\mathrm{Re}\left(\int_{\R^3}\left[\nabla u \overline{\nabla \varphi}+V_{\infty}u\overline{\varphi}-\Phi_{|u|}u\bar{\varphi}\right]dx\right).
\end{align*}
Denote the ground state energy of equation \eqref{1.8.2} by
\begin{equation}\label{nc1}
        c_{\infty}:= \inf \big\{I_{\infty}(u)\ |\ u\in \mathcal{N}_{\infty} \big\},
\end{equation}
where 
\begin{align}\label{nc2}
        \mathcal{N}_{\infty}:= \big\{u\in H^1(\mathbb{R}^3,\mathbb{C})\backslash \{0\} \ |\ J_{\infty}(u):=\langle I'_{\infty}(u),u\rangle=0\big\}.
\end{align}

% \begin{equation}\label{1.8.1}
%         \left\{
%  \renewcommand{\arraystretch}{1.25}
%  \begin{array}{ll}
%  -\Delta u+V_{\infty}u=\Phi_u u, \ \text{in}\ \R^3,\\
% u\in H^1(\mathbb{R}^3).
%  \end{array}
%  \right.
% \end{equation}
% By Theorem A, the positive solution of equation \eqref{1.8.1} is uniquely determined up to translations. Denote %$u_{\infty}(x):=V_{\infty}\bar{u}_{\infty}(\sqrt{V_{\infty}}x)$,
% \begin{align}\label{1.8.3}
%     u_{\infty}(x):=V_{\infty}\bar{u}_{\infty}(\sqrt{V_{\infty}}x),
% \end{align}
% then $u_{\infty}$ is a positive ground state solution of equation \eqref{1.8.1}.

%\vskip 0.08truein
The first result of this paper is as follows.
%The following lemma shows that $c_{\infty}$ is a isolated critical value of $I_{\infty}$, which is the key to obtain a high energy solution of equation \eqref{eqs1.1.0} .   

\begin{thm}\label{th1.0}
      There exists $\tau >0$ such that if $u$ is a nontrivial solution of equation \eqref{1.8.2} with $I_{\infty}(u)\leq c_{\infty}+\tau$, then there exists $\theta \in \mathbb{R}$ and $x_0 \in \mathbb{R}^3$ such that $u=e^{i\theta }u_{\infty}(\cdot -x_0)$,
where 
\begin{align}\label{1.8.3}
    u_{\infty}(x):=V_{\infty}\bar{u}_{\infty}(\sqrt{V_{\infty}}x)%,\ u_{\infty} \in H^1(\R^3,\R).
\end{align}
and $\bar{u}_{\infty}$ the unique positive solution of equation \eqref{1.5} with $\bar{u}_{\infty}(0)=\max_{x\in \R^3}\bar{u}_{\infty}(x)$.
% and then $u_{\infty}$ is a positive ground state solution of the follwing equation
% \begin{equation}\label{1.8.1}
%         \left\{
%  \renewcommand{\arraystretch}{1.25}
%  \begin{array}{ll}
%  -\Delta u+V_{\infty}u=\Phi_u u, \ \text{in}\ \R^3,\\
% u\in H^1(\mathbb{R}^3).
%  \end{array}
%  \right.
% \end{equation}
\end{thm}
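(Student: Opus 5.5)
Argue by contradiction, combined with a compactness argument and a separate classification of complex ground states. Suppose no such $\tau$ exists. Then there is a sequence of nontrivial solutions $u_n$ of \eqref{1.8.2} with $I_\infty(u_n)\le c_\infty+1/n$, none of the form $e^{i\theta}u_\infty(\cdot-x_0)$. Each $u_n$ lies on $\mathcal N_\infty$, so $I_\infty(u_n)\ge c_\infty$ and $I_\infty(u_n)\to c_\infty$.

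The first step classifies the minimizers at level $c_\infty$. If $w\in\mathcal N_\infty$ attains $c_\infty$, then $|w|\in H^1(\R^3,\R)$ satisfies $\int|\nabla|w||^2\le\int|\nabla w|^2$ (diamagnetic inequality with $A=0$), while the nonlocal term is unchanged. Projecting $|w|$ onto the real Nehari manifold therefore gives energy $\le c_\infty$. Since the real ground level equals $c_\infty$ (the real Nehari set embeds in $\mathcal N_\infty$), equality must hold, so $\int|\nabla|w||^2=\int|\nabla w|^2$ and $|w|$ is a nonnegative real ground state. By the strong maximum principle $|w|>0$, and by the Ma--Zhao uniqueness plus the scaling \eqref{1.8.3}, $|w|=u_\infty(\cdot-x_0)$. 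Equality in the diamagnetic inequality with $|w|>0$ forces $w=e^{i\theta}|w|$ with $\theta$ constant. Indeed, writing $w=|w|\xi$ with $|\xi|=1$, one has $|\nabla w|^2=|\nabla|w||^2+|w|^2|\nabla\xi|^2$, so $\nabla\xi=0$.

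The second step is compactness. The sequence $(u_n)$ is bounded, since $I_\infty(u_n)=\frac14\|u_n\|^2$ on $\mathcal N_\infty$. Vanishing is excluded because $\|u_n\|^2=\int\Phi_{|u_n|}|u_n|^2\le C\|u_n\|_{12/5}^4$ and $\|u_n\|^2\ge 4c_\infty>0$, so Lions' lemma gives translations $y_n$ with $v_n:=u_n(\cdot+y_n)\rightharpoonup v\ne0$. By weak–strong continuity of the Hartree term, $v$ solves \eqref{1.8.2}. Brezis–Lieb type splitting, $I_\infty(u_n)=I_\infty(v)+I_\infty(v_n-v)+o(1)$ together with $J_\infty(v_n-v)\to0$, combined with $I_\infty(v)\ge c_\infty$ and $I_\infty(v_n-v)=\frac14\|v_n-v\|^2+o(1)\ge o(1)$, yields $v_n\to v$ strongly and $I_\infty(v)=c_\infty$. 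By the first step, $v=e^{i\theta}u_\infty(\cdot-x_0)$. Absorbing translations and phases, we may assume $v_n\to u_\infty$ in $H^1$, where each $v_n$ is a solution not equal to any $e^{i\theta}u_\infty(\cdot-z)$.

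The third step, which is the main obstacle, is local isolation via nondegeneracy. Over $\mathbb C$, the kernel of $I_\infty''(u_\infty)$ is spanned by $\partial_j u_\infty$ ($j=1,2,3$) and $iu_\infty$. For real perturbations this is the nondegeneracy result (Wei–Winter, or Lenzmann). For the imaginary part, the operator $-\Delta+V_\infty-\Phi_{u_\infty}$ has $u_\infty>0$ as its ground state, so its kernel is exactly $\mathrm{span}\{u_\infty\}$. The manifold $\mathcal M=\{e^{i\theta}u_\infty(\cdot-z)\}$ is therefore a nondegenerate critical manifold. Choose $\theta_n,z_n$ minimizing $\|v_n-e^{i\theta}u_\infty(\cdot-z)\|$, so that $w_n:=e^{-i\theta_n}v_n(\cdot+z_n)-u_\infty$ is orthogonal to the kernel and $w_n\to0$, $w_n\ne0$. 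Subtracting the equations gives $L w_n=N(w_n)$, where $L$ is the linearization and $\|N(w_n)\|_{H^{-1}}=O(\|w_n\|^2)$ by the Hardy–Littlewood–Sobolev estimates. Since $L$ is invertible on the orthogonal complement of its kernel, $\|w_n\|\le C\|w_n\|^2$, which contradicts $0<\|w_n\|\to0$. This proves the theorem. The delicate parts are verifying the kernel description and the Fredholm/invertibility property of $L$ on $H^1(\R^3,\mathbb C)$ with the nonlocal term; the latter holds because the nonlocal part is compact relative to $-\Delta+V_\infty$.
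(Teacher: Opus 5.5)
Your proof is correct. Steps 1 and 2 follow the paper's Lemmas \ref{lem:1} and \ref{lem:3} in substance, with two small variations:
\begin{itemize}
\item Your identity $|\nabla w|^2=|\nabla|w||^2+|w|^2|\nabla\xi|^2$ replaces the paper's componentwise equality argument.
\item Your Br\'ezis--Lieb splitting replaces the paper's shorter observation that $c_\infty\le\frac14\|\bar u\|^2\le\frac14\liminf\|\bar u_n\|^2=c_\infty$.
\end{itemize}

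The genuine difference is in Step 3, in two places.

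\textbf{Orthogonality.} The paper gets $\hat u_n\perp\mathrm{Ker}L$ from a Brouwer degree argument (Lemma \ref{lem:4}). The map $\psi_n(z,t)$ of inner products with the four kernel elements converges uniformly to $\psi$. The Jacobian of $\psi$ at the origin is diagonal with positive entries, so $\psi_n$ has a zero near $0$. You instead minimize the distance from $v_n$ to the orbit $\{e^{i\theta}u_\infty(\cdot-z)\}$ and read off the four relations as first-order conditions. This is more elementary, but you should write down two facts:
\begin{itemize}
\item A minimizer exists: as $|z|\to\infty$ the distance tends to $(\|v_n\|^2+\|u_\infty\|^2)^{1/2}$, which exceeds its value at $(0,0)$ for large $n$.
\item The map $z\mapsto u_\infty(\cdot-z)$ is $C^1$ into $H^1$, because $u_\infty\in H^2$.
\end{itemize}

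\textbf{Final contradiction.} The paper does not invoke Fredholm theory. It normalizes $w_n=(\hat u_n-u_\infty)/\|\hat u_n-u_\infty\|$ and shows the weak limit $w_0$ lies in both $\mathrm{Ker}L$ and its orthogonal complement, so $w_0=0$. It then uses Hardy--Littlewood--Sobolev estimates on the equation for $w_n$ to show $\|w_n\|^2\to\|w_0\|^2$, i.e. no norm is lost. This is in effect a hand-made proof of the coercivity $\|w\|\le C\|Lw\|_{H^{-1}}$ on $(\mathrm{Ker}L)^\perp$, which you take from the Fredholm alternative via compactness of the Hartree part of $L$. Combined with your quadratic bound on the remainder, your version is shorter and quantitative; the paper's is self-contained.

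Your direct negation of the statement is also slightly more faithful to the theorem than the paper's ``critical values $c_n\searrow c_\infty$''. It handles solutions at exactly level $c_\infty$ through Step 1 and makes $w_n\neq0$ immediate.
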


The proof of Theorem \ref{th1.0} is inspired by \cite{B}. 
We first show that the ground state solution of \eqref{1.8.2} is unique %determined by $u_{\infty}$ 
up to translations and complex phases. %by showing $c_{\infty}=\tilde{c}_{\infty}$ 
%the ground energy $c_{\infty}$ is equivalent to the real-valued ones  $\tilde{c}_{\infty}$ 
%(see Lemma \ref{lem:1}).
Then to prove Theorem \ref{th1.0}, we assume by contradiction that $c_{\infty}$ is not an isolated critical level of $I_{\infty}$, that is, there exists a critical value sequence $\{c_n\}$ of  $I_{\infty}$ satisfying $c_n\searrow c_{\infty}$ and a corresponding critical point sequence $\{u_n\}$. %$\{u_n\}\subset H^1(\R^3,\mathbb{C})$. 
Next, we derive that $\{u_n\}$ is compact in $H^1(\R^3,\C)$ up to translations and complex phases. %that is, there exist $\{x_n\}\subset\R^3$ and $\theta_0\in\R$ such that $\tilde{u}_n:=e^{-i\theta_0}u_n(\cdot+x_n)\to u_{\infty}$ in $H^1(\R^3,\C)$. %(see Lemma \ref{lem:3}). 
%$\tilde{u}_n\to u_{\infty}$ strongly in $H_A^1(\R^3,\C)$, where $\tilde{u}_n$ is obtained from $u_n$ by translations and complex phases 
Furthermore, using Brouwer degree theory, we succeed in showing that there exists $\{(\tilde{x}_n,t_n)\}\subset \R^3\times \R$ %with $(\tilde{x}_n,t_n)\rightarrow\vec{0}$ 
such that $\hat{u}_n:=e^{it_n}u_n(\cdot+\tilde{x}_n) \perp \mathrm{Ker}L$, %$\hat{u}_n:=e^{it_n}\tilde{u}_n(\cdot+\tilde{x}_n) \perp \mathrm{Ker}L$, 
where $L$ is the linearized operator of equation \eqref{1.8.2} at $u_{\infty}$ (see \eqref{L}). 
Finally, define $w_n:=\frac{\hat{u}_n-u_{\infty}}{\left\|\hat{u}_n-u_{\infty}\right\|}$; then $\|w_n\|=1$ and $w_n\perp \mathrm{Ker}L$. 
%and $w_n$ is a bounded sequence in $H^1(\R^3,\C)$. 
On the one hand, there exists $w_0\in H^1(\R^3,\C)$ such that $w_n \rightharpoonup w_0$ in $H^1(\R^3,\C)$, which gives $w_0\perp \mathrm{Ker}L$ and $w_0\in \mathrm{Ker}L$. Hence, %and moreover  
$w_0=0$. 
On the other hand, we verify $\|w_0\|=\lim\limits_{n\rightarrow\infty}\|w_n\|=1$, which implies a contradiction and proves Theorem \ref{th1.0}. 
However, due to the presence of the nonlocal term, some delicate computation is needed (see Proof of Theorem \ref{th1.0}).
%$\hat{u}_n \perp \mathrm{Ker}L$ and $u_{\infty}\perp \mathrm{Ker}L$, 
%we know $w_0 \perp \mathrm{Ker}L$. Hence, $w_0=0$, 
%Moreover, we also prove $ w_n \to w_0$ in $H^1(\R^3,\C)$, then $\|w_0\|=1$. This is a contradiction. 

With Theorem \ref{th1.0} in hand, we are able to investigate the existence of nontrivial solutions to equation \eqref{eqs1.1.0}.
%Before stating our main results, we give some notations.
Define the work space
\begin{align*}
    H^1_A(\R^3,\mathbb{C}):=\left\{u\in L^2(\R^3,\mathbb{C})\ |\ \nabla_A u\in L^2(\R^3,\mathbb{C}^3)\right\},
\end{align*}
which is a Hilbert space endowed with the inner product
\begin{align*}
    (u,\varphi )_A:=\mathrm{Re}\left(\int_{\R^3}\left(\nabla_A u \overline{\nabla_A \varphi}+V(x)u\overline{\varphi}\right)dx\right),
\end{align*}
and the norm $\|u\|_A:=(u,u)_A^{\frac{1}{2}}$, where $\nabla_A u:=-i\nabla u+A(x)u$.
%\begin{align*}
%\|u\|_A:=\left(\int_{\R^3}(|\nabla_A u|^2 +V(x)|u|^2)dx\right)^{\frac{1}{2}}.
%\end{align*}
%Introduce the diamagnetic inequality \cite[theorem 7.21]{LL}, %for each $u\in H_A^1(\R^3,\C)$, 
%In addition, for $u\in L^p(\R^3,\mathbb{C})$, denote the norm 
%\begin{align*}
%     \|u\|_{L^p(\R^3)}:=\left(\int_{\R^3}|u|^pdx\right)^{\frac{1}{p}}.
%\end{align*}
%For simplicity, we note
%$\|u\|:=\|u\|_{H^1(\R^3,\mathbb{C})}$ and  $\|u\|_{L^p(\R^3)}:=\|u\|_{L^p(\R^3,\mathbb{C})}$.
Define the energy functional corresponding to equation \eqref{eqs1.1.0} as follows
\begin{align*}
       I(u):=\int_{\R^3}\left[\frac{1}{2}\big(|\nabla_A u|^2+V(x)|u|^2 \big)-\frac{1}{4}\Phi_{|u|} |u|^2\right]dx,\ \ u\in H^1_A(\R^3,\mathbb{C}).
\end{align*}
By the diamagnetic inequality \cite[Theorem 7.21]{LL}
\begin{align}\label{nor}
    |\nabla |u||\le |\nabla_A u|,\ u\in H_A^1(\R^3,\C),
\end{align}
we have $|u|\in H^1(\R^3,\R)$ for $u\in H_A^1(\R^3,\C)$, which, together with the Sobolev embedding theorem, implies that the embedding $H_A^1(\R^3,\mathbb{C})\hookrightarrow L^p(\R^3,\mathbb{C})$ is continuous for $p\in[2,6]$.
According to the Hardy-Littlewood-Sobolev inequality (see Lemma \ref{lm2.0.2}),  $I$ is well-defined in $H_A^1(\R^3,\mathbb{C})$ and $I\in C^1(H_A^1(\R^3,\mathbb{C}),\R)$.
Denote the ground state energy of equation \eqref{eqs1.1.0} by
\begin{equation}\label{1.11}
        c:= \inf \big\{I(u)\ |\ u\in \mathcal{N} \big\},
\end{equation}
where
\begin{align}\label{1.12}
    \mathcal{N}:= \big\{u\in H^1_A(\R^3,\mathbb{C})\backslash \{0\} \ |\ J(u):=\langle I'(u),u\rangle=0\big\}.
\end{align}
We know that %(see in Lemma \ref{lm3.1})%According to Lemma \ref{lm3.1}, 
if $u\in \mathcal{N}$ is a minimizer of $c$, then $u$ is a ground state solution of equation \eqref{eqs1.1.0} (see Lemma \ref{lm3.1}).

%On this basis, our results regarding equation \eqref{eqs1.1.0} are as follows.
%main results are as follows.
Our first result regarding equation \eqref{eqs1.1.0} is as follows.

\begin{thm}\label{th1.4}
       Suppose $(A)$ and $(V)$ hold with $N=3$. If $A$ and $V$ satisfy 
       
       %$(H_1)$\ \ $|A(x)|^2+V(x)\leq V_{\infty}+k_1e^{-k_2|x|}$,
       %$(H_1)$\ \ $|A(x)|^2+V(x)\leq \left(\frac{c_{\infty}+\min\{c_{\infty},\tau\}}{c_{\infty}}\right)^{\frac{1}{2}}V_{\infty}$,
       $(H_1)$\ \ $|A(x)|^2+V(x)\leq V_{\infty}+4\|u_{\infty}\|_{L^2(\R^3)}^{-2}\left(\sqrt{c_{\infty}(c_{\infty}+\min\{c_{\infty},\tau\})}-c_{\infty}\right)$,
       
      \noindent{where}
      %\begin{align}\label{c}
      %    c=4\|u_{\infty}\|_{L^2(\R^3)}^{-2}\left(\sqrt{c_{\infty}(c_{\infty}+\min\{c_{\infty},\tau\})}-c_{\infty}\right),
     % \end{align}
      $\tau $ and $u_{\infty}$ are given in Theorem \ref{th1.0}, then equation \eqref{eqs1.1.0}  either has a ground state solution, or has a high energy solution $u$ with $I(u)\in (c_{\infty},c_{\infty}+\min\{c_{\infty},\tau\})$.
\end{thm}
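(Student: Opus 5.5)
Let $c_A:=c$ denote the Nehari level in \eqref{1.11}. The plan has three stages. First, compare $c$ with $c_\infty$. Second, dispose of the case $c<c_\infty$ by compactness. Third, in the case $c=c_\infty$ with no minimizer, build a linking-type min--max level in $(c_\infty,c_\infty+\min\{c_\infty,\tau\})$ via a barycenter map and Brouwer degree, and use Theorem \ref{th1.0} to rule out loss of compactness at that level.

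\textbf{Comparison and the compact case.} Test $\mathcal N$ with projections of $u_\infty(\cdot-y)$, $|y|\to\infty$. Since $A(x)\to0$ and $V(x)\to V_\infty$, we get $\|u_\infty(\cdot-y)\|_A^2\to\|u_\infty\|^2$, hence $c\le c_\infty$. Now take a $(PS)_c$ sequence on $\mathcal N$; by Ekeland it is a genuine $(PS)$ sequence, and it is bounded. Apply the global compactness lemma, splitting it into a weak limit $u_0$ solving \eqref{eqs1.1.0} plus bubbles, which are solutions of \eqref{1.8.2} translated to infinity. Each bubble carries energy at least $c_\infty$, and any nontrivial limit has $I(u_0)\ge c$. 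So if $c<c_\infty$ there are no bubbles, $u_0\neq0$, and $u_0$ is a ground state. If $c=c_\infty$ and some minimizer exists, we are also done. The remaining case is $c=c_\infty$ with no minimizer.

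\textbf{Min--max construction in the non-attained case.} The idea is to slide $u_\infty$ around on a large sphere. Fix $R$ large and $\Sigma=\partial B_R$, and for $y\in\Sigma$ set $\psi_y:=t_y\,u_\infty(\cdot-y)$ with $t_y$ the Nehari scaling. For $u\in\mathcal N$ define the barycenter $\beta(u)=\int \frac{x}{|x|}\wedge 1\cdot|u|^2\,dx/\int|u|^2$, or a similar truncated version.

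\emph{Claim 1.} There is $\delta>0$ such that $\beta(u)\ne0$ whenever $u\in\mathcal N$ and $I(u)<c_\infty+\delta$. Otherwise a sequence with $\beta=0$ and $I\to c_\infty=c$ would, by the splitting, either be compact, giving a minimizer (excluded), or be a single bubble $u_\infty$ escaping to infinity, whose barycenter tends to a unit vector. Either way this contradicts $\beta=0$.

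\emph{Claim 2.} Let $\mathcal H=\{\gamma\in C(\bar B_R,\mathcal N):\gamma(y)=\psi_y \text{ on }\Sigma\}$ and $d:=\inf_{\gamma\in\mathcal H}\max I\circ\gamma$. The degree argument gives $d\ge c_\infty+\delta>c_\infty$: since $\beta\circ\gamma$ has nonzero degree on the boundary, it vanishes somewhere. Note also that $\max_{\Sigma}I(\psi_y)\to c_\infty$, so the boundary lies below $d$.

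\textbf{Upper bound via $(H_1)$.} This is the main obstacle. We must show $d<c_\infty+\min\{c_\infty,\tau\}$. Take the path $\gamma(sy)$ given by the Nehari projection of $u_\infty(\cdot-sy)$, $s\in[0,1]$. By the diamagnetic-type identity,
\[
\|u\|_A^2\le\int|\nabla u_\infty|^2+\int(|A|^2+V)u_\infty^2
\]
for real $u=u_\infty(\cdot-z)$, because the cross term $\mathrm{Re}(-i\nabla u\cdot A\bar u)$ vanishes for real $u$. Then $(H_1)$ gives
\[
\|u\|_A^2\le\|u_\infty\|^2+\kappa\|u_\infty\|_2^2,
\]
where $\kappa$ is the constant in $(H_1)$. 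Projected Nehari energies scale like $\tfrac14\|u\|_A^4/\int\Phi u^2$, and $\|u_\infty\|^2=4c_\infty=\int\Phi u_\infty^2$. Hence the maximum is at most
\[
\frac{(4c_\infty+\kappa\|u_\infty\|_2^2)^2}{16c_\infty}.
\]
The specific constant in $(H_1)$ is precisely chosen so that this equals $c_\infty+\min\{c_\infty,\tau\}$; I need strictness, which should follow from $V(x)+|A|^2\not\equiv$ the bound, or else accept equality and argue via the endpoint separately.

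\textbf{Conclusion.} By the deformation lemma there is a $(PS)_d$ sequence with $d\in(c_\infty,c_\infty+\min\{c_\infty,\tau\})$. In the splitting, two bubbles would cost at least $2c_\infty>d$. A single bubble plus $u_0=0$ would give a solution of \eqref{1.8.2} with energy $d\in(c_\infty,c_\infty+\tau)$, which Theorem \ref{th1.0} forbids, since any such solution has energy exactly $c_\infty$. A bubble plus $u_0\ne0$ would give $d\ge c+c_\infty=2c_\infty$, again too large. Hence the sequence converges strongly, and its limit is the high energy solution.
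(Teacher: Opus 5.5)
Your proposal is correct and follows essentially the same route as the paper. The steps match: $c\le c_\infty$ via translates of $u_\infty$, global compactness when $c<c_\infty$, and otherwise a Brouwer-degree min--max over $\mathcal N$-valued maps on $\overline{B_R}$ whose boundary data are the Nehari projections of $u_\infty(\cdot-z)$. The lower bound comes from the barycenter gap (your Claim 1 is the paper's $c<c_0$; as there, apply Ekeland on $\mathcal N$ to get a $(PS)_c$ sequence before invoking the splitting). The upper bound comes from $(H_1)$ along that same map, and compactness from Theorem \ref{th1.0}.

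The strictness you were unsure about is automatic. The constant in $(H_1)$ is positive, $|A|^2+V\to V_\infty$ at infinity, and $u_\infty>0$, so the inequality is strict for every $z$ and hence for the maximum over the compact ball.
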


One of the difficulties in proving Theorem \ref{th1.4} is the lack of compactness of the embedding $H_A^1(\R^3,\mathbb{C})\hookrightarrow L^p(\R^3,\mathbb{C})$. First, we show $c\le c_{\infty}$. Then, in order to recover the compactness, we establish a global compactness lemma (see Lemma \ref{lm4.2}), which provides a complete description of Palais-Smale sequences for the energy functional $I$. Note that the Br\'ezis–Lieb type splitting properties of the functional $\int_{\R^3}\Phi_{|u|} |u|^2dx$ and its derivative play an important role in the proof of the global compactness lemma. However, to the best of our knowledge, there is no result on the Br\'ezis–Lieb type splitting property of the derivative of $\int_{\R^3}\Phi_{|u|} |u|^2dx$ in $H^1(\R^3,\mathbb{C})$. We fill this gap (see \eqref{A2}). 
Next, if $c<c_{\infty}$, using the global compactness lemma, we obtain the compactness of the minimizing sequence for $c$, proving that equation \eqref{eqs1.1.0} has a ground state solution. 

On the other hand, if equation \eqref{eqs1.1.0} has no ground state solution, we will show that it possesses a high energy solution. To this end, by employing the global compactness lemma and Theorem \ref{th1.0}, we succeed in recovering the compactness of $(PS)_{d}$ sequence for $I$ with $d\in (c_{\infty},c_{\infty}+\min\{c_{\infty},\tau\})$. The second challenge lies in finding a $(PS)_{d}$ sequence for $I$ in this range. Inspired by \cite{BL,C}, utilizing a barycenter function and Brouwer degree theory, we construct a special minimax value $\bar{c}\in (c_{\infty},c_{\infty}+\min\{c_{\infty},\tau\})$ (see \eqref{3.29}). Then, the general minimax principle yields a $(PS)_{\bar{c}}$ sequence. Hence, equation \eqref{eqs1.1.0} admits a high energy solution $u$ with $I(u)=\bar{c}$.

Theorem \ref{th1.4} provides an alternative conclusion: equation \eqref{eqs1.1.0} has either a ground state solution or a high energy solution. 
We are thus led to the following question: under what conditions does equation \eqref{eqs1.1.0} have a ground state solution?
The following result answers this question.
%concerns the ground state solution of equation \eqref{eqs1.1.0}.
\begin{thm}\label{th1.1}
        Suppose $(A)$ and $(V)$ hold with $N=3$. If $A$ and $V$ satisfy 
        
       $(H_2)$\ \ $|A(x)|^2+V(x)\leq V_{\infty}$ and $|A(x)|^2+V(x)\nequiv V_{\infty}$,
       
        \noindent{then} equation \eqref{eqs1.1.0} has a ground state solution.
\end{thm}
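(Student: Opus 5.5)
The approach is to show that $(H_2)$ forces the strict inequality $c<c_\infty$. Once that is known, the existence of a ground state follows from the compactness mechanism already outlined for Theorem \ref{th1.4}. A minimizing sequence for $c$ on $\mathcal{N}$ can be turned, via Ekeland's principle (or the standard Nehari-manifold argument), into a $(PS)_c$ sequence. This sequence is bounded in $H^1_A(\R^3,\C)$. By the global compactness lemma (Lemma \ref{lm4.2}), it decomposes as a weak limit $u_0$ plus finitely many translated profiles, each of which solves the limit equation \eqref{1.8.2}. Each nontrivial profile carries energy at least $c_\infty$. Since $c<c_\infty$, no profile can occur, so the sequence converges strongly and $u_0\in\mathcal{N}$ attains $c$. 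By Lemma \ref{lm3.1}, $u_0$ is then a ground state solution.

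The core step is the strict inequality. Take $u_\infty$ from Theorem \ref{th1.0}, which is real valued and positive. Since $u_\infty$ is real, $\nabla_A u_\infty=-i\nabla u_\infty+Au_\infty$ has $|\nabla_A u_\infty|^2=|\nabla u_\infty|^2+|A|^2u_\infty^2$, because the cross term $2\mathrm{Re}(-i\nabla u_\infty\cdot A u_\infty)$ vanishes. Hence, for every $t>0$,
\begin{align*}
I(tu_\infty)=\frac{t^2}{2}\int_{\R^3}\big(|\nabla u_\infty|^2+(|A|^2+V)u_\infty^2\big)dx-\frac{t^4}{4}\int_{\R^3}\Phi_{u_\infty}u_\infty^2dx.
\end{align*}
Set $W:=|A|^2+V$. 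By $(H_2)$, $W\le V_\infty$ and $W\not\equiv V_\infty$. Since $W$ is continuous and $u_\infty>0$ everywhere, $\int_{\R^3}(W-V_\infty)u_\infty^2dx<0$. Therefore $I(tu_\infty)<I_\infty(tu_\infty)$ for all $t>0$.

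Next, choose $t_0>0$ with $t_0u_\infty\in\mathcal{N}$. This is possible because the quadratic part is positive, as $V_0>0$. Then
\begin{align*}
c\le I(t_0u_\infty)<I_\infty(t_0u_\infty)\le\max_{t>0}I_\infty(tu_\infty)=I_\infty(u_\infty)=c_\infty.
\end{align*}
The final equality uses that $u_\infty\in\mathcal{N}_\infty$ is a ground state, so $t=1$ maximizes the fibering map.

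The step I expect to be the main obstacle is excluding concentration at infinity when the $(PS)_c$ sequence has vanishing weak limit, for example if $u_0=0$. I will rely on the splitting given by the global compactness lemma: $c=I(u_0)+\sum_j I_\infty(w^j)$. Each term $I_\infty(w^j)$ with $w^j\neq0$ is at least $c_\infty$. If $u_0\ne0$, then $u_0$ is a nontrivial critical point of $I$, so $I(u_0)\ge c>0$ with $c>0$ by the usual Nehari bound. Together these leave no room for any profile when $c<c_\infty$, which also rules out $u_0=0$ since $c>0$. Some care is needed because the profiles are complex valued, possibly carrying phases $e^{i\theta}$ and the magnetic gauge. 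Assumption $(A)$ guarantees that $A(\cdot+y_n)\to0$ locally uniformly, so translated profiles solve the $A$-free limit equation, and $c_\infty$ bounds their energy regardless of phase.
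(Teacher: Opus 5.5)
Your proposal is correct and follows the paper's route. Like the paper, you reduce to $c<c_\infty$ via Lemma \ref{lm3.6} (Ekeland plus the global compactness lemma) and prove the strict inequality by projecting $u_\infty$ onto $\mathcal{N}$. The only cosmetic difference is in that last step: the paper notes $J(u_\infty)<0$, so $t\in(0,1)$ and $c\le \frac{t^4}{4}\int_{\R^3}\Phi_{|u_\infty|}u_\infty^2dx<c_\infty$, whereas you compare $I(t_0u_\infty)<I_\infty(t_0u_\infty)\le\max_{t>0}I_\infty(tu_\infty)=c_\infty$, which is equivalent.
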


\begin{rem}
In \cite{LiB}, Li and Wei proved that if $A$ and $V$ satisfy $(A)$, $(V)$ and 

$(H')$\ \ there exist $c_0>0$ and $k_0>0$ such that $|A(x)|^2+V(x)\le V_{\infty}-c_0 e^{-k_0|x|}$ for $x\in \R^N$,
   % $$|A(x)|^2+V(x)\le V_{\infty}-c_0 e^{-k_0|x|}\ \ \text{for}\ x\in \R^N, $$
   
    \noindent{then} equation \eqref{01} admits a ground state solution. Notice that $(H_2)$ is weaker than $(H')$. Following a similar argument as in Theorem \ref{th1.1}, we can improve their result. Namely, if $A$ and $V$ satisfy $(A)$, $(V)$ and $(H_2)$, then equation \eqref{01} possesses a ground state solution.
   % Note that if $(A)$ and $(V)$ hold, $A$ and $V$ satisfy the following condition\\
  %  Li and Wei \cite{LiB} also proved that equation \eqref{01} admits a ground state solution. Obviously, $(H_2)$ is weaker than $(H')$, we promote their result.
\end{rem}

%By Theorem \ref{th1.1}, we know that equation \eqref{eqs1.1.0} has a ground state solution if $(A)$, $(V)$ and $(H_2)$ hold.  
%$\rho(x)$ satisfies $(\rho 1)$.
Next, a natural question is under which conditions equation \eqref{eqs1.1.0} does not admit a ground state solution but instead possesses a high energy solution. This aspect is addressed in our next results. %The following result concerns the ground state solution of equation  \eqref{eqs1.1.0}.
%A natural question is whether equation \eqref{eqs1.1.0} still has a ground state solution if $\rho\leq 0$. 
%The following result gives a negative answer.
\begin{thm}
    \label{th1.2}
        Suppose $(A)$ and $(V)$ hold with $N=3$. If $V$ satisfies
        
        $(H_3)$\ \ $ V_{\infty}< V(x)$ \ for all $x\in \R^3$,% and $V(x)\nequiv V_{\infty}$,
       %$(H_3)$\ \ $|A(x)|^2+V(x)> V_{\infty}$, % and $|A(x)|^2+V(x)\nequiv V_{\infty}$,
       
        \noindent{then} equation \eqref{eqs1.1.0} has no ground state solution.
\end{thm}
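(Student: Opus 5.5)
We argue by contradiction: we show that $c=c_{\infty}$ and that this value is never attained. The two ingredients are the diamagnetic inequality \eqref{nor} and the strict inequality $V>V_{\infty}$ from $(H_3)$.

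\emph{Step 1: $c\ge c_{\infty}$.} Fix $u\in\mathcal{N}$. By \eqref{nor} and $(H_3)$,
\[
\int_{\R^3}\big(|\nabla|u||^2+V_\infty|u|^2\big)dx\le\int_{\R^3}\big(|\nabla_Au|^2+V(x)|u|^2\big)dx=\int_{\R^3}\Phi_{|u|}|u|^2dx,
\]
and the right-hand side depends only on $|u|$. So $J_\infty(|u|)\le 0$. There is therefore a unique $t\in(0,1]$ with $t|u|\in\mathcal{N}_\infty$, namely
\[
t^2=\frac{\| |u| \|^2}{\int\Phi_{|u|}|u|^2}.
\]
On the Nehari manifold both energies reduce to a quarter of the quadratic part: $I(u)=\frac14\|u\|_A^2$ and $I_\infty(t|u|)=\frac14 t^2\||u|\|^2$. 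Hence
\[
c_\infty\le I_\infty(t|u|)=\frac14 t^2\||u|\|^2\le\frac14\||u|\|^2\le\frac14\|u\|_A^2=I(u).
\]
Taking the infimum over $u\in\mathcal{N}$ gives $c\ge c_\infty$.

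\emph{Step 2: $c\le c_{\infty}$.} This is the inequality the paper already establishes in the proof of Theorem~\ref{th1.4}. To recall the argument, take translates $u_\infty(\cdot-y)$ with $|y|\to\infty$ and project them onto $\mathcal{N}$. Since $A(x)\to0$ and $V(x)\to V_\infty$ by $(A)$ and $(V)$, and $u_\infty$ decays, we have $\|u_\infty(\cdot-y)\|_A^2\to\|u_\infty\|^2$. The nonlocal term is translation invariant, so the Nehari projection factors tend to $1$ and the energies tend to $c_\infty$. Combined with Step 1, $c=c_\infty$.

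\emph{Step 3: $c$ is not attained.} Suppose $u\in\mathcal{N}$ satisfies $I(u)=c=c_\infty$. Then every inequality in the chain of Step 1 is an equality. Equality in the middle step forces $t=1$. Equality in the first step, i.e.\ in the $V$-comparison, gives
\[
\int_{\R^3}\big(V(x)-V_\infty\big)|u|^2dx=0 .
\]
Because $V-V_\infty>0$ everywhere by $(H_3)$, this forces $u\equiv0$, contradicting $u\in\mathcal{N}$. Hence no ground state solution exists.

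Step 2 is the only point that needs any care, since the projection argument has to control both $A$ and $V$ at infinity; it is presumably already available from the proof of Theorem~\ref{th1.4}. Note also that the strictness exploited in Step 3 comes from the $V$-term alone, so we never need the diamagnetic inequality to be strict.
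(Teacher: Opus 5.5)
Your proof is correct and is essentially the paper's argument: it compares $u\in\mathcal{N}$ with $t|u|\in\mathcal{N}_\infty$ using \eqref{nor} and $(H_3)$, and combines this with Lemma~\ref{lm3.3}. The only difference is where the strictness enters. The paper uses $J_\infty(|u|)<0$ at the outset, so $t\in(0,1)$ and $c_\infty<I(u)=c$ directly. You instead first prove $c=c_\infty$ and then read off the strictness from the equality case, which makes your Step~1 unnecessary as a separate step.
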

% \begin{rem}
%     By $(H_3)$, we know that the following condition holds
    
%     $(H_4)$\ \ $V_{\infty}< |A(x)|^2+V(x)$  for all $x\in \R^3$.
    
%     \noindent{So,} $(H_3)$ is stronger. We point out that if the ground state solution of equation \eqref{eqs1.1.0} is uniquely determined by the real-valued one up to translations and complex phases, $(H_3)$ can be replaced by $(H_4)$.
% \end{rem}

%The relationship between the ground state energy of equation \eqref{eqs1.1.0} and that of equation \eqref{1.8.2} plays an important role in the proof of Theorems \ref{th1.1} and \ref{th1.2}. 
%More precisely, if $(H_2)$ holds, by proving $c<c_\infty$, equation \eqref{eqs1.1.0} has a ground state solution.
% While, if $(H_3)$ holds, we find the fact $c=c_\infty$, which is vital to establishing the nonexistence of ground state solutions for equation \eqref{eqs1.1.0}. Furthermore, we can prove the existence of high energy solutions. 
Combining Theorems \ref{th1.4} and \ref{th1.2}, we obtain the following result.
\begin{thm}\label{th1.3}
       Suppose $(A)$, $(V)$, $(H_1)$ and $(H_3)$ hold with $N=3$. Then equation \eqref{eqs1.1.0} has a high energy solution $u$ with $I(u)\in (c_{\infty},c_{\infty}+\min\{c_{\infty},\tau\})$.
\end{thm}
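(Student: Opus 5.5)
Theorem \ref{th1.3} follows directly by combining Theorems \ref{th1.4} and \ref{th1.2}, and this is the route I will take.

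First, since $(A)$, $(V)$ and $(H_1)$ hold with $N=3$, Theorem \ref{th1.4} applies. It gives a dichotomy: either equation \eqref{eqs1.1.0} has a ground state solution, that is, a minimizer of $c$ on $\mathcal{N}$ (which is a critical point by Lemma \ref{lm3.1}), or it has a high energy solution $u$ with
\[
I(u)\in \bigl(c_{\infty},c_{\infty}+\min\{c_{\infty},\tau\}\bigr).
\]

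Second, assumption $(H_3)$, namely $V_\infty<V(x)$ for all $x\in\R^3$, together with $(A)$ and $(V)$, puts us in the setting of Theorem \ref{th1.2}. That theorem says equation \eqref{eqs1.1.0} has no ground state solution, so the first alternative of the dichotomy is excluded and the second must hold. This is exactly the claimed high energy solution, with its energy in the stated interval.

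The only point to check is that the two theorems are compatible, i.e.\ that $(H_1)$ and $(H_3)$ can hold simultaneously so the statement is not vacuous. The right-hand side of $(H_1)$ is
\[
V_\infty+4\|u_\infty\|_{L^2(\R^3)}^{-2}\Bigl(\sqrt{c_\infty(c_\infty+\min\{c_\infty,\tau\})}-c_\infty\Bigr),
\]
and this is strictly bigger than $V_\infty$ because $\min\{c_\infty,\tau\}>0$. So any $V$ with $V_\infty<V(x)$, with $|A|^2+V$ staying below this threshold and $A\to0$ at infinity, satisfies both hypotheses. The argument has no real obstacle: all the analytic work (isolation of $c_\infty$, global compactness, the barycenter minimax) sits inside Theorems \ref{th1.0}, \ref{th1.4} and \ref{th1.2}.
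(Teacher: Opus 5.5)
Your proposal is correct and follows the paper's route exactly: it combines the dichotomy of Theorem \ref{th1.4} (specifically its ``no ground state $\Rightarrow$ high energy solution'' half, Lemma \ref{lm3.12}) with the nonexistence result of Theorem \ref{th1.2}. Your check that $(H_1)$ and $(H_3)$ are compatible is not needed for the logic, but it is a harmless sanity check.
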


\vskip 0.1truein
% \begin{rem}
%          It is pointed out that if $V(x)\equiv V_{\infty}$, Theorem \ref{th1.4} holds.
%          Moreover, for $A\equiv0$, Theorems \ref{th1.4}, \ref{th1.1}, \ref{th1.2} and \ref{th1.3} also hold.
% \end{rem}

\begin{rem}
        Under some obvious modifications, Theorems \ref{th1.1} and \ref{th1.2} hold for the following Choquard equation with a vanishing magnetic potential
        \begin{equation}\label{1.8}
                 (-i\nabla +A(x))^2u+V(x)u=\big[I_{\alpha}*|u|^p\big]|u|^{p-2} u, \ x\in\R^N,
        \end{equation}
        where $N\ge 3$, $\alpha\in (0,N)$ and $p\in (\frac{N+\alpha}{N},\frac{N+\alpha}{N-2})$. Moreover, if the positive ground state solution of equation \eqref{1.6} is unique up to translations and is nondegenerate, then Theorems \ref{th1.0}, \ref{th1.4}
         and \ref{th1.3} also hold for equation \eqref{1.8}.
\end{rem}

This paper is organized as follows. In Section \ref{s2.1}, we prove Theorem \ref{th1.0}, showing that $c_{\infty}$ is an isolated critical value of $I_{\infty}$.
Section \ref{s2} is devoted to the proof of Theorem \ref{th1.1}. Specifically, if $c<c_{\infty}$, we prove that equation \eqref{eqs1.1.0} has a ground state solution by using a global compactness lemma.
If, instead, equation \eqref{eqs1.1.0} admits no ground state solution, utilizing a barycenter function, Brouwer degree theory, the global compactness lemma, and Theorem \ref{th1.0}, we show the existence of a high energy solution for the equation \eqref{eqs1.1.0}.
We give the proofs of Theorems \ref{th1.1} and \ref{th1.2} in Section \ref{s4}.
%We first show that $c\le c_{\infty}$ holds. %with $(A)$ and $(V)$. 
%For $c<c_{\infty}$,  Next, we can prove $c=c_{\infty}$ if equation \eqref{eqs1.1.0} has no ground state solution. 
%Then, we construct a special min-max value $\bar{c}\in (c_{\infty},c_{\infty}+\min\{c_{\infty},\tau\})$. 
%Finally, combining the general minimax principle, the global compactness lemma, and Theorem \ref{th1.0}, we prove the existence of a high energy solution for the equation \eqref{eqs1.1.0}.
 
%has been studied extensively by many researchers, This causes some mathematical difficulties, which make the study of (\ref{eqs1.1}) particularly interesting.

%equation \eqref{eqs1.1.0} appears in the study of standing waves of the form $\varPsi(t,x)=e^{-iu_{\infty} t}u(x)$ to the following
%time-dependent nonlinear Schr\"odinger-Newton system
%\begin{equation}\label{eqs1.0.0}
%        \left\{\begin{array}{ll}
%          -i\varPsi_t-\Delta \varPsi+(1+u_{\infty})\varPsi=\Phi \varPsi, \hspace{3mm}(t,x)\in \R_+\times \R^3,\\
%          -\Delta \Phi  =| \varPsi| ^2+\rho(x), \hspace{3mm}(t,x)\in \R_+\times \R^3,
%        \end{array} \right.
%\end{equation}
%where $u_{\infty}\in\R$ is the frequency constant.

\section{Proof of Theorem \ref{th1.0}}\label{s2.1}
In this section, we prove Theorem 1.1. Firstly,  we introduce the Hardy-Littlewood-Sobolev inequality (see \cite[Thorem $A^*$]{SW}  and \cite[(1.1)]{L1}), which will be used later.%\cite[Lemma A.1]{ZZ} that \cite[Theorem 4.3]{LL}
%we present a property of the function $\Phi_{|u|}$ defined by \eqref{eq1.2}.
%\begin{lem}\label{lm2.0.3}
%There exists $C>0$ such that for all $u,v\in  L^{\frac{12}{5}}(\mathbb{R}^3,\mathbb{C})$, it holds 
%\begin{align*}
%    \left\|\Phi_{|u|}-\Phi_{|v|}\right\|_{D^{1,2}(\R^3)}\leq C\left\||u|^2-|v|^2\right\|_{L^{\frac{6}{5}}(\mathbb{R}^3)}.
%\end{align*}
%\end{lem}
%\noindent{\bf Proof.}
%We give the proof for completeness. Notice that $\Phi_{|u|}-\Phi_{|v|}$ satisfies 
%\begin{align*}
%    -\Delta (\Phi_{|u|}-\Phi_{|v|})=|u|^2-|v|^2,
%\end{align*}
%which, together with the H\"older inequality and the Sobolev inequality, implies 
%\begin{align*}
%    \left\|\Phi_{|u|}-\Phi_{|v|}\right\|_{D^{1,2}(\R^3)}^2
%    =&\int_{\R^3}\left(|u|^2-|v|^2\right)(\Phi_{|u|}-\Phi_{|v|})dx
%    \leq \left\||u|^2-|v|^2\right\|_{L^{\frac{6}{5}}(\mathbb{R}^3)} \left\|\Phi_{|u|}-\Phi_{|v|}\right\|_{L^{6}(\R^3)}\\
%    \leq &C\left\||u|^2-|v|^2\right\|_{L^{\frac{6}{5}}(\mathbb{R}^3)} \left\|\Phi_{|u|}-\Phi_{|v|}\right\|_{D^{1,2}(\R^3)}.
%\end{align*}
%This completes the proof.
%\qed

\begin{lem}\label{lm2.0.2}
    If $u\in L^p(\R^3,\R)$, $1<p<q<+\infty$, and $\frac{1}{q}=\frac{1}{p}+\frac{\lambda}{3}-1$ with $\lambda\in(0,3)$, then there exists $C_{p,q,\lambda}>0$, independent of $u$, %, but may depend on $p$, $q$ and $\lambda$, 
    such that
    \begin{align}\label{HLS1}
        \left\|\int_{\R^3}\frac{u(y)}{|x-y|^{\lambda}}dy\right\|_{L^q(\R^3)}\le C_{p,q,\lambda}\|u\|_{L^p(\R^3)}.
    \end{align}
    Moreover, combining the H\"older inequality, for all $v\in L^s(\R^3,\R)$ with  $1<s<+\infty$ and $\frac{1}{s}+\frac{1}{p}+\frac{\lambda}{3}=2$, it holds
    \begin{align}\label{HLS2}
         \left|\int_{\R^3}\int_{\R^3}\frac{u(y)v(x)}{|x-y|^{\lambda}}dydx\right|
    \le C \|v\|_{L^s(\R^3)}\left\|\int_{\R^3}\frac{u(y)}{|x-y|^{\lambda}}dy\right\|_{L^q(\R^3)}\le C \|v\|_{L^s(\R^3)}\|u\|_{L^p(\R^3)}.
    \end{align}
\end{lem}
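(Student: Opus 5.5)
The first inequality \eqref{HLS1} is the classical Hardy--Littlewood--Sobolev inequality for the Riesz potential $I_\lambda u:=|\cdot|^{-\lambda}*u$. I would simply invoke it from \cite[Theorem $A^*$]{SW} (the unweighted case); the paper cites it as such. If a self-contained argument were wanted, I would use Hedberg's pointwise method. Fix $x$ and a radius $R>0$, and split the integral into $|x-y|<R$ and $|x-y|\ge R$. On the near region, a dyadic decomposition bounds the contribution by $C R^{3-\lambda} Mu(x)$, where $M$ is the Hardy--Littlewood maximal function. On the far region, H\"older gives the bound $C R^{3-\lambda-3/p}\|u\|_{L^p}$; this exponent is negative exactly because $\frac1q=\frac1p+\frac\lambda3-1$ and $q<\infty$, so the tail integral converges. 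Optimizing in $R$ yields
\[
|I_\lambda u(x)|\le C\,\big(Mu(x)\big)^{p/q}\,\|u\|_{L^p(\R^3)}^{1-p/q}.
\]
Taking the $L^q$ norm and using the $L^p$ boundedness of $M$ (valid since $p>1$) then gives \eqref{HLS1}.

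For \eqref{HLS2}, I would first check the exponent bookkeeping. From $\frac1s+\frac1p+\frac\lambda3=2$ and $\frac1q=\frac1p+\frac\lambda3-1$ we get $\frac1s+\frac1q=1$, so $s$ and $q$ are H\"older conjugates. The constraint $1<s<\infty$ is equivalent to $1<q<\infty$, which matches the hypotheses of the first part. By Fubini (justified a posteriori by applying the same estimate to $|u|,|v|$), the double integral equals $\int_{\R^3}v(x)\,I_\lambda u(x)\,dx$. H\"older's inequality with the pair $(s,q)$ bounds this by $\|v\|_{L^s}\|I_\lambda u\|_{L^q}$, and \eqref{HLS1} finishes the estimate.

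The only genuinely nontrivial step is \eqref{HLS1} itself, i.e.\ the maximal-function bound, or equivalently weak-type Young plus Marcinkiewicz interpolation. Since this is a standard result, I would cite it rather than reprove it. The remaining care is in verifying that the exponent relations make the tail integral converge and make $s,q$ conjugate.
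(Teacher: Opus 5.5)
Your proposal is correct and follows the paper's approach: the paper also proves \eqref{HLS1} only by citing the Hardy--Littlewood--Sobolev inequality (Stein--Weiss, Theorem $A^*$), and it obtains \eqref{HLS2} from H\"older's inequality with the conjugate pair $(s,q)$. Your Hedberg-type sketch, exponent check and Tonelli justification of Fubini are all correct; they are extras the paper leaves implicit.
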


Now, we discuss the 
relationship between the ground state energy of equation \eqref{1.8.2} and that of the follwing equation
\begin{equation}\label{1.8.1}
        \left\{
 \renewcommand{\arraystretch}{1.25}
 \begin{array}{ll}
 -\Delta u+V_{\infty}u=\Phi_{|u|} u, \ \text{in}\ \R^3,\\
u\in H^1(\mathbb{R}^3,\R).
 \end{array}
 \right.
\end{equation}
% By Theorem A, the positive solution of equation \eqref{1.8.1} is uniquely determined up to translations.
% Denote the energy functional of \eqref{1.8.2} by
% \begin{align}
%         I_{\infty}(u):=\int_{\R^3}\left[\frac{1}{2}\big(|\nabla u|^2+V_\infty|u|^2 \big)-\frac{1}{4}\Phi_u |u|^2\right]dx,\ \ u\in H^1(\mathbb{R}^3,\mathbb{C}).
% \end{align}
% It is obvious that $I_{\infty}(u)$ is well-defined and belongs to $C^1(H^1(\mathbb{R}^3,\mathbb{C}),\mathbb{R})$, then the Gateaux derivative is given by
% \begin{align}
%   \langle I'_{\infty}(u),\varphi\rangle=\mathrm{Re}\left(\int_{\R^3}\left[\nabla u \overline{\nabla \varphi}+V_{\infty}u\overline{\varphi}-\Phi_uu\bar{\varphi}\right]dx\right).
% \end{align}
% We define the ground state energy of equation \eqref{1.8.2} by
% \begin{equation*}
%         c_{\infty}:= \inf \big\{I_{\infty}(u)\ |\ u\in \mathcal{N}_{\infty} \big\},
% \end{equation*}
% where 
% \begin{align*}
%         \mathcal{N}_{\infty}:= \big\{u\in H^1(\mathbb{R}^3,\mathbb{C})\backslash \{0\} \ |\ J_{\infty}(u):=\langle I'_{\infty}(u),u\rangle=0\big\}.
% \end{align*}
Define the Nehari manifold of equation \eqref{1.8.1} and the corresponding ground state energy by
\begin{align*}
\widetilde{\mathcal{N}}_{\infty} :=\left\{u\in H^1(\R^3,\R)\setminus\{0\}: J_{\infty}(u)=0\right\},\ \ \tilde{c}_{\infty}:=\inf\limits_{u\in \widetilde{\mathcal{N}}_{\infty}}I_{\infty}(u),
\end{align*}
and summarize the results in \cite{L,Lions,TM,MZ} as follows.

\begin{lem}\label{thm A} 
     The positive solution of equation \eqref{1.8.1} is uniquely determined up to translations. Moreover, the positive solution of equation \eqref{1.8.1} is also a ground state solution of equation \eqref{1.8.1} and must be radially symmetric and monotone decreasing about some fixed point.  
\end{lem}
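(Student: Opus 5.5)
The plan is to reduce everything to the normalized equation \eqref{1.5} by a scaling and then combine the cited results of \cite{L,Lions,TM,MZ}. For a real $v$, set $u(x):=V_{\infty}v(\sqrt{V_{\infty}}\,x)$. Substituting $y=z/\sqrt{V_\infty}$ in the Newtonian potential gives $dy=V_\infty^{-3/2}dz$ and $|x-y|=V_\infty^{-1/2}|\sqrt{V_\infty}x-z|$. Hence
\begin{align*}
\Phi_{|u|}(x)=V_\infty\,\Phi_{|v|}(\sqrt{V_\infty}x),\qquad -\Delta u+V_\infty u-\Phi_{|u|}u=V_\infty^2\big(-\Delta v+v-\Phi_{|v|}v\big)(\sqrt{V_\infty}\,\cdot).
\end{align*}
So the map $v\mapsto u$ is a bijection between solutions of \eqref{1.5} and of \eqref{1.8.1}. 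It preserves positivity, translations (up to rescaling of the shift) and radial symmetry. A similar computation shows that $I_\infty(u)$ and $J_\infty(u)$ are positive constant multiples of the corresponding quantities for \eqref{1.5}. Therefore Nehari manifolds and ground states also correspond, and it suffices to prove the lemma for $V_\infty=1$.

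For $V_\infty=1$ I would proceed in three steps. \emph{Step 1 (symmetry):} by the moving plane argument of Ma and Zhao \cite{MZ}, every positive solution of \eqref{1.5} in $H^1(\R^3,\R)$ is radially symmetric and monotone decreasing about some point $x_0$. \emph{Step 2 (uniqueness):} Lieb \cite{L} (see also \cite{TM}) shows that the positive radial solution of \eqref{1.5} is unique. Together with Step 1, this gives uniqueness of positive solutions up to translations. \emph{Step 3 (ground state):} by Lieb and Lions \cite{L,Lions}, $\tilde c_\infty$ is attained. Given a minimizer $w\in\widetilde{\mathcal N}_\infty$, the function $|w|$ also lies in $\widetilde{\mathcal N}_\infty$, because $|\nabla|w||=|\nabla w|$ a.e. and the nonlocal term depends only on $|w|$; it has the same energy, so it is also a minimizer. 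A standard Lagrange multiplier argument on the Nehari manifold shows that the multiplier vanishes, so $|w|$ is a nonnegative nontrivial solution. Since $\Phi_{|w|}\ge0$, we have $-\Delta|w|+|w|\ge0$, and the strong maximum principle gives $|w|>0$. By Steps 1 and 2, $|w|$ is a translate of the unique positive solution. Consequently every positive solution is a ground state of \eqref{1.8.1}.

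The main obstacle is Step 1. Moving planes for the nonlocal equation require decay and integrability of $u$ and $\Phi_{|u|}$ at infinity, and one must handle the nonlocal comparison term $\int(\frac{1}{|x-y|}-\frac{1}{|x-y^\lambda|})(u^2(y)-u^2(y^\lambda))\,dy$. For this I would rely directly on \cite{MZ}, after first checking via elliptic regularity and exponential decay that solutions in $H^1$ satisfy the hypotheses there. The remaining steps are standard once the cited results are quoted.
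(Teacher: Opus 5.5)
Your proposal is correct and follows essentially the paper's route. The paper gives no argument for this lemma beyond citing \cite{L,Lions,TM,MZ}. Your scaling $u(x)=V_\infty v(\sqrt{V_\infty}x)$ is exactly how the paper defines $u_\infty$ in \eqref{1.8.3}. Combined with Ma--Zhao's moving-plane symmetry, Lieb's uniqueness of the positive radial solution, and the existence of a Nehari minimizer (your Step 3, where $|w|>0$ plus uniqueness shows the positive solution is a ground state, as in Lemma \ref{lem:1}), this is precisely the content being summarized.
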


Therefore, the function $u_{\infty}$ defined by \eqref{1.8.3} is the unique positive solution of equation \eqref{1.8.1} with $u_{\infty}(0)=\max_{x\in \R^3}u_{\infty}(x)$, which is also a ground state solution. 
Note that $u_{\infty}\in \widetilde{\mathcal{N}}_{\infty}\subset \mathcal{N}_{\infty}$, where $\mathcal{N}_{\infty}$ is defined by \eqref{nc2}. So, $c_{\infty}$ defined by \eqref{nc1} is well-defined.
%Then,  combining Lemma \ref{thm A}, we know that the Nehari manifold $\mathcal{N}_{\infty}$ is non-empty.
Before proving $c_{\infty}=\tilde{c}_{\infty}$, we show some properties of the Nehari manifold $\mathcal{N}_{\infty}$.

\begin{lem}\label{lm2.0.1}
    For $v\in H^1(\mathbb{R}^3,\mathbb{C})\setminus \{0\}$, there exists a unique $t_v>0$ such that $t_vv\in \mathcal{N}_{\infty}$. Moreover, $t_v\in (0,1)$ if $J_{\infty}(v)<0$, while $t_v>1$ if $J_{\infty}(v)>0$. %$I_{\infty}(tv)<I_{\infty}(t_vv)$ for $t>0$ and $t\neq t_v$.
     %Especially, $\mathcal{N}$ is non-empty.
\end{lem}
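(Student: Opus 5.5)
The plan is to use the scaling structure of $J_{\infty}$ directly. Fix $v\in H^1(\R^3,\C)\setminus\{0\}$ and set
\begin{align*}
a:=\|v\|^2=\int_{\R^3}\big(|\nabla v|^2+V_\infty|v|^2\big)dx,\qquad b:=\int_{\R^3}\Phi_{|v|}|v|^2dx .
\end{align*}
Since $\Phi_{|tv|}=t^2\Phi_{|v|}$, for $t>0$ we get
\begin{align*}
J_{\infty}(tv)=\langle I'_{\infty}(tv),tv\rangle=t^2a-t^4b=t^2\big(a-t^2b\big).
\end{align*}
The claim then reduces to the sign and size of $a$ and $b$.

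First I check that $a>0$, which holds because $v\neq0$ and $V_\infty>0$. Next I check that $b>0$. Writing
\begin{align*}
b=\frac{1}{4\pi}\int_{\R^3}\int_{\R^3}\frac{|v(x)|^2|v(y)|^2}{|x-y|}\,dx\,dy,
\end{align*}
the integrand is nonnegative and is positive on a set of positive measure, so $b>0$. Finiteness of $b$ follows from Lemma \ref{lm2.0.2} with $\lambda=1$ and $p=s=\frac65$, applied to $|v|^2\in L^{6/5}$, which holds since $v\in L^{12/5}$ by Sobolev embedding.

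For $t>0$, the equation $J_\infty(tv)=0$ is equivalent to $t^2=a/b$. So $t_v:=\sqrt{a/b}$ is the unique positive value with $t_vv\in\mathcal N_\infty$, and $t_vv\neq0$.

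For the second part, note that $J_\infty(v)=a-b$. If $J_\infty(v)<0$, then $a<b$, so $t_v\in(0,1)$. If $J_\infty(v)>0$, then $a>b$, so $t_v>1$.

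I expect no step to be a real obstacle. The only point needing care is $b\in(0,\infty)$: finiteness comes from the Hardy–Littlewood–Sobolev inequality and positivity from the positivity of the Coulomb kernel.
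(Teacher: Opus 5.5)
Your proposal is correct and follows essentially the same route as the paper: the paper sets $g_v(t)=I_{\infty}(tv)$, computes $g_v'(t)=t\big(\|v\|^2-t^2\int_{\R^3}\Phi_{|v|}|v|^2dx\big)$ and uses $J_{\infty}(tv)=t\,g_v'(t)$ to get $t_v=\|v\|\big(\int_{\R^3}\Phi_{|v|}|v|^2dx\big)^{-1/2}$. This is exactly your $\sqrt{a/b}$ from $J_{\infty}(tv)=t^2(a-t^2b)$, and your explicit check that $b\in(0,\infty)$ (via Hardy--Littlewood--Sobolev and the positivity of the Coulomb kernel) supplies a detail the paper leaves implicit.
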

\noindent{\bf Proof.}
For each $v\in H^1(\mathbb{R}^3,\mathbb{C})\setminus \{0\}$ and $t\in \R^+$, define
\begin{align*}
        g_v(t):=I_{\infty}(tv)=\frac{t^2}{2}\int_{\R^3}(|\nabla v|^2+V_{\infty}|v|^2)dx-\frac{t^4}{4}\int_{\R^3}\Phi_{|v|} |v|^2 dx.
\end{align*}
One has
\begin{align*}
        g'_v(t)=t\left(\int_{\R^3}(|\nabla v|^2+V_{\infty}|v|^2)dx-t^2\int_{\R^3}\Phi_{|v|} |v|^2 dx\right),
\end{align*}
and then $g_v$ has a unique critical point% $t_v$.
\begin{align*}%\label{N1}
    t_v=\|v\|\left(\int_{\R^3}\Phi_{|v|} |v|^2 dx\right)^{-\frac{1}{2}}.
\end{align*}
Note that $J_{\infty}(t_v v)=t_vg'_v (t_v)=0$. So, $t_vv\in \mathcal{N}_{\infty}$. Moreover, $t_v\in (0,1)$ if $J_{\infty}(v)<0$, while $t_v>1$ if $J_{\infty}(v)>0$.
\qed

\vskip 0.08truein
Then, we show that the ground state energy of equation \eqref{1.8.1} is equal to that of equation \eqref{1.8.2}.

\begin{lem}\label{lem:1}
%We have the following identity:
%$$c_{\infty}=\tilde{c}_{\infty},$$
%where $c_{\infty}$ is defined by \eqref{nc1}. 
It holds $c_{\infty}=\tilde{c}_{\infty}$. Moreover, if $u$ is a ground state solution of equation \eqref{1.8.2}, then there exists $\theta_0\in\R$ and $x_0\in\R^3$ such that $u(x)=e^{i\theta_0}u_{\infty}(x-x_0)$. %where $u_{\infty}$ is defined by \eqref{1.8.3}.
%and then $u_{\infty}$ is a ground state solution of equation \eqref{1.8.1}.
\end{lem}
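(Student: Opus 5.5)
The plan is to compare complex-valued functions with their moduli, using the diamagnetic inequality with $A\equiv 0$: for $u\in H^1(\R^3,\C)$ we have $|u|\in H^1(\R^3,\R)$ and $|\nabla |u||\le |\nabla u|$ a.e. The nonlocal term depends only on $|u|$, since $\Phi_{|u|}|u|^2$ does not change when $u$ is replaced by $|u|$.

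Inequality $c_\infty\le \tilde c_\infty$ is immediate from $\widetilde{\mathcal N}_\infty\subset\mathcal N_\infty$. For the reverse inequality, take $u\in\mathcal N_\infty$ and set $v=|u|$. Then $J_\infty(v)\le J_\infty(u)=0$. By Lemma \ref{lm2.0.1} there is $t_v\in(0,1]$ with $t_v v\in\widetilde{\mathcal N}_\infty$. On the Nehari manifold $I_\infty(w)=\frac14\|w\|^2$, so
\[
I_\infty(t_v v)=\tfrac14 t_v^2\|v\|^2\le \tfrac14\|u\|^2=I_\infty(u).
\]
Taking the infimum gives $\tilde c_\infty\le c_\infty$.

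Now let $u$ be a ground state of \eqref{1.8.2}, so $u\in\mathcal N_\infty$ and $I_\infty(u)=c_\infty$. The chain above must then be a chain of equalities. Equality forces $t_v=1$ and $\int|\nabla|u||^2=\int|\nabla u|^2$. Hence $|u|\in\widetilde{\mathcal N}_\infty$ attains $\tilde c_\infty$. A Nehari minimizer is a critical point, by the Lagrange multiplier argument using that the constraint derivative $\langle J_\infty'(w),w\rangle=-2\|w\|^2\neq0$. So $|u|$ is a nonnegative nontrivial solution of \eqref{1.8.1}. Since $\Phi_{|u|}|u|\ge0$, the strong maximum principle (or the Harnack inequality) gives $|u|>0$. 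By Lemma \ref{thm A}, $|u|=u_\infty(\cdot-x_0)$ for some $x_0$.

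The remaining and main step is to show that the phase is constant. I would write $u=|u|e^{i\varphi}$ locally; this is legitimate since $|u|>0$ and $u$ is $C^1$ by elliptic regularity. Then
\[
|\nabla u|^2=|\nabla|u||^2+|u|^2|\nabla\varphi|^2 .
\]
Equality in the diamagnetic inequality forces $|u|^2|\nabla\varphi|^2=0$, hence $\nabla\varphi=0$. To avoid working with a multivalued $\varphi$, use the identity $|\nabla u|^2-|\nabla|u||^2=|u|^2\,|\nabla(u/|u|)|^2$, valid where $u\ne0$, which is everywhere. This gives $\nabla(u/|u|)=0$ on the connected set $\R^3$. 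So $u/|u|\equiv e^{i\theta_0}$, and $u=e^{i\theta_0}u_\infty(\cdot-x_0)$.

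The delicate points are justifying regularity so that the pointwise identity holds, and the positivity of $|u|$. For the latter I would use the equation $-\Delta|u|+V_\infty|u|=\Phi_{|u|}|u|\ge0$ in the weak sense, which holds because $|u|$ is a critical point, together with the weak Harnack inequality.
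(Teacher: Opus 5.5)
Your proposal is correct and follows essentially the same route as the paper: replace $u$ by $|u|$ using $|\nabla|u||\le|\nabla u|$, force equality throughout the Nehari chain (so $t=1$ and $\int|\nabla|u||^2=\int|\nabla u|^2$), invoke uniqueness of positive solutions of \eqref{1.8.1}, and conclude that $u/|u|$ is constant. The differences are in detail, and mostly improvements: you run the comparison for an arbitrary $u\in\mathcal{N}_\infty$, which gives $c_\infty=\tilde c_\infty$ without presupposing that $c_\infty$ is attained (the paper's chain starts from a ground state of \eqref{1.8.2}, whose existence it has not established at that point); you get $|u|>0$ from the weak Harnack inequality instead of Theorem~3 of Moroz--Van Schaftingen; and you get a constant phase from the identity $|\nabla u|^2-|\nabla|u||^2=|u|^2|\nabla(u/|u|)|^2$ instead of the paper's componentwise equality argument.
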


\begin{proof}
Since $\widetilde{\mathcal{N}}_{\infty}\subset\mathcal{N}_{\infty}$, we have $c_{\infty}\leq \tilde{c}_{\infty}$ .  Note that $\nabla|u|=\mathrm{Re}\left(\frac{\bar{u}}{|u|}\nabla u\right)$, then for $i=1,2,3$, 
\begin{align}\label{eq:01}
\left|\frac{\partial|u|}{\partial x_i}\right|=\left|\mathrm{Re}\left(\frac{\bar{u}}{|u|}\frac{\partial u}{\partial x_i}\right)\right|\leq\left|\frac{\bar{u}}{|u|}\frac{\partial u}{\partial x_i}\right|=\left|\frac{\partial u}{\partial x_i}\right|,
\end{align}
that is, 
\begin{equation}\label{eq:1}
    \left|\nabla |u|\right|\leq\left|\nabla u\right|.
\end{equation}
Suppose that $u\in H^1(\R^3,\mathbb{C})$ is a ground state solution of equation \eqref{1.8.2}, then 
\begin{align*}
    J_{\infty}(|u|)=\int_{\R^3}\left(\left|\nabla|u|\right|^2+V_{\infty}|u|^2-\Phi_{|u|}|u|^2\right)dx\leq\int_{\R^3}\left(|\nabla u|^2+V_{\infty}|u|^2-\Phi_{|u|}|u|^2\right)dx=0.
\end{align*}
So, by Lemma \ref{lm2.0.1}, we know that there exists $t\in(0,1]$ such that $t|u|\in\widetilde{\mathcal{N}}_{\infty}$. Then, one has
\begin{align*}
    \tilde{c}_{\infty}&\leq I_{\infty}(t|u|)=I_{\infty}(t|u|)-\frac{1}{4}J_{\infty}(t|u|)=\frac{t^2}{4}\int_{\R^3}\left(\left|\nabla|u|\right|^2+V_{\infty}|u|^2\right)dx
    \leq\frac{t^2}{4}\int_{\R^3}\left(\left|\nabla u\right|^2+V_{\infty}|u|^2\right)dx\\
    &\leq\frac{1}{4}\int_{\R^3}\left(\left|\nabla u\right|^2+V_{\infty}|u|^2\right)dx=c_{\infty}
    \leq \tilde{c}_{\infty},
\end{align*}
which implies that $c_{\infty}=\tilde{c}_{\infty}=I_{\infty}(|u|)$, $t=1$, and 
\begin{align}\label{eq:2}
\int_{\R^3}\left|\nabla|u|\right|^2dx=\int_{\R^3}\left|\nabla u\right|^2dx.
\end{align}
Therefore, $|u|$ is a nonnegative ground state solution of equation \eqref{1.8.1}. Then, using Theorem 3 in \cite{MV}, we obtain $|u|>0$. %$|u|\in C^2(\R^3,\R)$. By the strong maximum principle \cite[Chapter 6,Theorem 4]{E}, we obtain $|u|>0$.

Moreover, it follows from Lemma \ref{thm A} that $|u|(x)=u_{\infty}(x-x_0)$ for some $x_0\in\R^3$. For $z\in\mathbb{C}$, denote 
\begin{equation*}
    \mathrm{sign}(z):=
    \left\{
    \begin{aligned}
&\frac{z}{|z|}, &z\neq0;\\
&0, &z=0.
    \end{aligned}
    \right.
\end{equation*}
Note that $u=\mathrm{sign}(u)|u|=\mathrm{sign}(u) u_{\infty}(x-x_0)$. Since $|u|>0$, then $\mathrm{sign}(u)=\frac{u}{|u|}$. It suffices to prove $\nabla \mathrm{sign}(u)=0$ and then we can get $\mathrm{sign}(u)=e^{i\theta}$. In fact,
\begin{align*}
    \nabla\mathrm{sign}(u)=\nabla\left(\frac{u}{|u|}\right)=\frac{\nabla u|u|-u\nabla|u|}{|u|^2}.
\end{align*}
Combining \eqref{eq:1} and \eqref{eq:2}, we have $\left|\nabla|u|\right|=|\nabla u|$, which together with \eqref{eq:01}, gives 
$$\frac{\partial|u|}{\partial x_i}=\frac{\bar{u}}{|u|}\frac{\partial u}{\partial x_i}\in \R,\ \ i=1,2,3.$$ 
Thus,
$$u\frac{\partial |u|}{\partial x_i}=\frac{u\bar{u}}{|u|}\frac{\partial u}{\partial x_i}=|u|\frac{\partial u}{\partial x_i}, \ \ i=1,2,3.$$
It follows that $u\nabla|u|=|u|\nabla u$, so $\nabla\mathrm{sign}(u)=0$. The proof is completed.
\end{proof}

%\vskip 0.08truein

The next lemma shows that the sequence of critical points has a strongly convergent subsequence in $H^1(\R^3,\mathbb{C})$ up to translations and complex phases.
\begin{lem}\label{lem:3}
    Suppose $\{c_n\}\subset[c_{\infty},+\infty)$ is a critical value sequence of $I_{\infty}$ satisfying $c_n\searrow c_{\infty}$ and let $\{u_n\}\subset H^1(\R^3,\mathbb{C})$ be the corresponding critical point sequence. Then, there exist $\theta_0\in\R$ and $\{x_n\}\subset\R^3$ such that up to a subsequence,
    \begin{align*}
        e^{-i\theta_0}u_n(x+x_n)\rightarrow u_{\infty}\ \ \text{strongly in} \ \ H^1(\R^3,\mathbb{C}).
    \end{align*}
\end{lem}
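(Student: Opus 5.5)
We first show that $\{u_n\}$ is a minimizing sequence for $c_{\infty}$ on $\mathcal{N}_{\infty}$ and is bounded. Since $u_n\neq 0$ is a critical point, $J_{\infty}(u_n)=0$, so $u_n\in\mathcal{N}_{\infty}$ and
\begin{align*}
c_n=I_{\infty}(u_n)-\tfrac14 J_{\infty}(u_n)=\tfrac14\|u_n\|^2 .
\end{align*}
Hence $\|u_n\|^2=4c_n\to 4c_{\infty}$, and $\{u_n\}$ is bounded in $H^1(\mathbb{R}^3,\mathbb{C})$. By $J_{\infty}(u_n)=0$ we also have $\int\Phi_{|u_n|}|u_n|^2dx=\|u_n\|^2\to 4c_{\infty}>0$.

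\textbf{Nonvanishing and extraction of a profile.} By Lemma \ref{lm2.0.2} with $\lambda=1$, $p=s=6/5$,
\begin{align*}
\int\Phi_{|u|}|u|^2\,dx\le C\|u\|_{L^{12/5}}^4 .
\end{align*}
Therefore $\|u_n\|_{L^{12/5}}$ is bounded away from zero, and Lions' vanishing lemma, applied to $|u_n|\in H^1(\mathbb{R}^3,\mathbb{R})$, gives $x_n\in\mathbb{R}^3$ with $\int_{B_1(x_n)}|u_n|^2\ge\delta>0$. Set $v_n:=u_n(\cdot+x_n)$. Translation invariance keeps $v_n$ a critical point at level $c_n$. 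Up to a subsequence $v_n\rightharpoonup v\neq0$ in $H^1$, strongly in $L^2_{loc}$ and a.e. Passing to the limit in $\langle I'_{\infty}(v_n),\varphi\rangle=0$ for $\varphi\in C_c^\infty$ uses $\Phi_{|v_n|}\to\Phi_{|v|}$ weakly in $D^{1,2}$ (hence strongly in $L^q_{loc}$). This gives $I'_{\infty}(v)=0$, so $v\in\mathcal{N}_{\infty}$.

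\textbf{Strong convergence.} The lower semicontinuity of the norm gives
\begin{align*}
c_{\infty}\le I_{\infty}(v)=\tfrac14\|v\|^2\le\liminf\tfrac14\|v_n\|^2=c_{\infty}.
\end{align*}
Hence $\|v_n\|\to\|v\|$, which together with weak convergence yields $v_n\to v$ strongly in $H^1(\mathbb{R}^3,\mathbb{C})$. In particular $I_{\infty}(v)=c_{\infty}$, so $v$ is a ground state solution of \eqref{1.8.2}.

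\textbf{Identification.} By Lemma \ref{lem:1}, $v=e^{i\theta_0}u_{\infty}(\cdot-y_0)$ for some $\theta_0\in\mathbb{R}$ and $y_0\in\mathbb{R}^3$. Replacing $x_n$ by $x_n+y_0$ gives
\begin{align*}
e^{-i\theta_0}u_n(\cdot+x_n)\to u_{\infty}\quad\text{in } H^1(\mathbb{R}^3,\mathbb{C}),
\end{align*}
with a single phase $\theta_0$ after the subsequence has been fixed.

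\textbf{Main obstacle.} The only delicate point is the nonlocal term when passing to the limit in the weak form of the equation. The norm identity from the Nehari relation avoids needing a Brezis–Lieb splitting, and the remaining limit is handled by the local convergence of $\Phi_{|v_n|}$ described above.
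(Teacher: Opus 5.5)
Your proof is correct and follows essentially the same route as the paper: the Nehari identity $c_n=\frac14\|u_n\|^2$ for boundedness, nonvanishing via the HLS bound and Lions' lemma, translation to get a nonzero weak limit that is a critical point, lower semicontinuity to force strong convergence at level $c_\infty$, and Lemma \ref{lem:1} to identify the limit. The differences are cosmetic. You get nonvanishing from $\|u_n\|^2\to 4c_\infty>0$ rather than from the paper's uniform lower bound $\|u\|\ge\alpha$ on $\mathcal{N}_\infty$, which is also the natural justification for $c_\infty>0$, a fact you assert without proof. You also spell out the passage to the limit in the nonlocal term, which the paper leaves implicit.
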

\begin{proof}
For $u\in\mathcal{N}_{\infty}$, by the Hardy-Littlewood-Sobolev inequality \eqref{HLS2} and the Sobolev inequality, we have
    \begin{align*}
       %\min\{1,V_{\infty}\}
       \|u\|^2=
       \int_{\R^3} \left(|\nabla u|^2+V_{\infty}|u|^2\right)dx=\int_{\R^3}\Phi_{|u|}|u|^2dx\leq C\|u\|^4,
    \end{align*}
which implies that there exists $\alpha>0$ such that 
\begin{align}\label{eq:3}
    \|u\|\geq\alpha,\ \ \text{for}\ \ u\in\mathcal{N}_{\infty}.
\end{align}
%\begin{align}\label{eq:3}
 %   \|u\|_{H^1(\R^3)}\geq\alpha,\ \ \text{for}\ \ u\in\mathcal{N}_{\infty}.
%\end{align}
Moreover,
\begin{align*}
    c_{\infty}+o_n(1)=c_n=I_{\infty}(u_n)=I_{\infty}(u_n)-\frac{1}{4}J_{\infty}(u_n)=\frac{1}{4}\|u_n\|^2.
    %\int_{\R^3} \left(|\nabla u_n|^2+V_{\infty}|u_n|^2\right)dx%\ge \frac{\min\{1,V_{\infty}\}}{4}\|u_n\|^2.
\end{align*}
Thus, $\{u_n\}$ is bounded in $H^1(\R^3,\mathbb{C})$. Denote
\begin{align*}
\delta_0:=\lim\limits_{n\rightarrow\infty}\sup\limits_{z\in\R^3}\int_{B(z,1)}|u_n(x)|^2dx,
\end{align*}
where $B(z,1)$ is an open ball with $z$ as the center and 1 as the radius. (Hereafter, we denote $B(0,r)$ by $B_r$ for simplicity.) %If the center is 0, we also simply write it as $B_1$.) 
If $\delta_0=0$, by the vanishing lemma \cite[Lemma 1.21]{W}, $|u_n|\rightarrow0$ in $L^p(\R^3,\R)$, $p\in(2,6)$. Then,
\begin{align*}
     %\min\{1,V_{\infty}\}\|u_n\|^2\leq\int_{\R^3} \left(|\nabla u_n|^2+V_{\infty}|u_n|^2\right)dx
     \|u_n\|^2=\int_{\R^3}\Phi_{|u_n|}|u_n|^2dx\leq C\|u_n\|^4_{L^{\frac{12}{5}}(\R^3)}=o_n(1),
\end{align*}
which contradicts \eqref{eq:3}. This implies that $\delta_0>0$. So there exists $\{\bar{x}_n\}\subset\R^3$ such that 
\begin{align*}
    \int_{B(\bar{x}_n,1)}|u_n|^2dx\geq\frac{\delta_0}{2}.
\end{align*}
Define $\bar{u}_n(x):=u_n(x+\bar{x}_n)$. It is obvious that $\|\bar{u}_n\|=\|u_n\|\leq C$, which implies that there exists $\bar{u}\in H^1(\R^3,\mathbb{C})$ such that, up to a subsequence,
\begin{equation*}
\left\{
    \begin{aligned}
        &\bar{u}_n\rightharpoonup\bar{u}\ \ \text{in}\ \ H^1(\R^3,\mathbb{C}),\\
        &\bar{u}_n\rightarrow\bar{u}\ \ \text{in}\ \ L^p_{loc}(\R^3,\mathbb{C}),p\in[2,6),\\
        &\bar{u}_n\rightarrow\bar{u}\ \ \text{a.e. on}\ \ \R^3.
    \end{aligned}
    \right.
\end{equation*}
Then
\begin{align*}
\int_{B(0,1)}|\bar{u}|^2dx=\lim\limits_{n\rightarrow\infty}\int_{B(0,1)}|\bar{u}_n|^2dx=\lim\limits_{n\rightarrow\infty}\int_{B(\bar{x}_n,1)}|u_n|^2dx\geq\frac{\delta_0}{2}.
\end{align*}
Thus, we conclude that $\bar{u}\neq0$. Since %$I_{\infty}(\bar{u}_n)=I_{\infty}(u_n)$ and 
$\langle I'_{\infty}(\bar{u}_n),\varphi\rangle=\langle I'_{\infty}(u_n),\varphi(x-\bar{x}_n)\rangle=0$ for each $\varphi\in H^1(\R^3,\mathbb{C})$, $\bar{u}_n$ is also a critical point of $I_{\infty}$, which implies that $\bar{u}$ is a nontrivial solution of equation \eqref{1.8.2}. By the weakly lower semi-continuity of norm, we can get
\begin{align*}
    c_{\infty}\leq I_{\infty}(\bar{u})=
    \frac{1}{4}\|\bar{u}\|^2%\int_{\R^3} \left(|\nabla \bar{u}|^2+V_{\infty}|\bar{u}|^2\right)dx
    \leq \frac{1}{4}\lim\limits_{n\rightarrow\infty}\|\bar{u}_n\|^2%\int_{\R^3} \left(|\nabla \bar{u}_n|^2+V_{\infty}|\bar{u}_n|^2\right)dx\\
    =\frac{1}{4}\lim\limits_{n\rightarrow\infty}\|u_n\|^2%\int_{\R^3} \left(|\nabla u_n|^2+V_{\infty}|u_n|^2\right)dx
    =\lim\limits_{n\rightarrow\infty}I_{\infty}(u_n)=\lim\limits_{n\rightarrow\infty}c_n=c_{\infty},
\end{align*}
which implies that $I_{\infty}(\bar{u})=c_{\infty}$ and $\bar{u}_n\rightarrow\bar{u}$ in $H^1(\R^3,\mathbb{C})$. Thus, $\bar{u}$ is a ground state solution of equation \eqref{1.8.2}. By Lemma \ref{lem:1}, $\bar{u}(x)=e^{i\theta_0}u_{\infty}(x-\bar{x}_0)$ for some $\theta_0\in \R$ and $\bar{x}_0\in\R^3$. Define $x_n=\bar{x}_n+\bar{x}_0$ and 
\begin{align}\label{eq:4}
\tilde{u}_n(x)=e^{-i\theta_0}u_n(x+x_n),
\end{align}
then
\begin{align*}
    \|\tilde{u}_n-u_{\infty}\|=\|e^{-i\theta_0}u_n(x+\bar{x}_n+\bar{x}_0)-u_{\infty}\|=\|\bar{u}_n-\bar{u}\|\rightarrow0,\ \ \text{as} \ \ n\rightarrow\infty,
\end{align*}
that is, $e^{-i\theta_0}u_n(x+x_n)\rightarrow u_{\infty}$ in $H^1(\R^3,\mathbb{C})$.   
\end{proof}

% For convenience, we denote the inner product and the norm by
% \begin{align*}
%     (u,v)_{\infty}:=\mathrm{Re}\left(\int_{\R^3}\left(\nabla u\overline{\nabla v}+V_{\infty}u\bar{v}\right)dx\right),\ \ \|u\|_{\infty}:=(u,u)^{\frac{1}{2}}_{\infty}.
% \end{align*}
% It is easy to see that the norm $\|u\|_{\infty}$ is equivalent to the standard norm $\|u\|_{H^1(\R^3)}$.
Note that the linearized operator of equation \eqref{1.8.2} at $u_{\infty}$ is given by
\begin{align}\label{L}
    Lw:=-\Delta w+V_{\infty}w-\Phi_{|u_{\infty}|}w-\frac{1}{2\pi}u_{\infty}\int_{\R^3}\frac{\mathrm{Re}\left(u_{\infty}(y)\bar{w}(y)\right)}{|x-y|}dy
\end{align}
acting on $L^2(\R^3,\mathbb{C})$ with domain $H^1(\R^3,\mathbb{C})$. The nondegeneracy of ground state solution for equation \eqref{1.8.2} has been studied by Lenzmann \cite{Lenzmann2009}, as shown below.
\begin{lem}\cite[Remark (1)]{Lenzmann2009}\label{lem:2}
    The linearized operator $L$ satisfies
    \begin{align*}
        \mathrm{Ker} L=\mathrm{span}\left\{\frac{\partial u_{\infty}}{\partial x_1},\frac{\partial u_{\infty}}{\partial x_2},\frac{\partial u_{\infty}}{\partial x_3},iu_{\infty}\right\}.
    \end{align*}
\end{lem}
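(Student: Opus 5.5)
The plan is to split $L$ into real and imaginary parts and reduce the statement to two kernel computations for real scalar operators. The hard analytic input is the nondegeneracy theorem of Lenzmann \cite{Lenzmann2009} for the real operator, which I would invoke rather than reprove in full.

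\textbf{Splitting $L$.} Write $w=w_1+iw_2$ with $w_1,w_2\in H^1(\R^3,\R)$. Since $u_{\infty}$ is real, $\mathrm{Re}(u_{\infty}\bar w)=u_{\infty}w_1$. Hence $Lw=L_+w_1+iL_-w_2$, where
\begin{align*}
L_+w_1&:=-\Delta w_1+V_{\infty}w_1-\Phi_{|u_{\infty}|}w_1-\frac{1}{2\pi}u_{\infty}\int_{\R^3}\frac{u_{\infty}(y)w_1(y)}{|x-y|}dy,\\
L_-w_2&:=-\Delta w_2+V_{\infty}w_2-\Phi_{|u_{\infty}|}w_2 .
\end{align*}
Both operators are real, so $\mathrm{Ker}L=\mathrm{Ker}L_+\oplus i\,\mathrm{Ker}L_-$. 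It therefore suffices to show
\begin{align*}
\mathrm{Ker}L_-=\mathrm{span}\{u_{\infty}\},\qquad \mathrm{Ker}L_+=\mathrm{span}\{\partial_{x_1}u_{\infty},\partial_{x_2}u_{\infty},\partial_{x_3}u_{\infty}\}.
\end{align*}
By the scaling \eqref{1.8.3}, I may assume $V_{\infty}=1$ throughout.

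\textbf{Kernel of $L_-$.} The operator $L_-$ is a local Schr\"odinger operator $-\Delta+1-\Phi_{|u_{\infty}|}$. Its potential is continuous and satisfies $\Phi_{|u_{\infty}|}\to0$ at infinity, so the essential spectrum is $[1,\infty)$. The equation for $u_{\infty}$ gives exactly $L_-u_{\infty}=0$. Since $u_{\infty}>0$ by Lemma \ref{thm A}, the standard Perron--Frobenius (ground state) argument shows that $0$ is the lowest eigenvalue of $L_-$ and that it is simple. Hence $\mathrm{Ker}L_-=\mathrm{span}\{u_{\infty}\}$.

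\textbf{Kernel of $L_+$, inclusion.} Differentiating the equation $-\Delta u_{\infty}+u_{\infty}=\Phi_{u_{\infty}}u_{\infty}$ in $x_j$ gives $L_+\partial_{x_j}u_{\infty}=0$. This yields $\supseteq$.

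\textbf{Kernel of $L_+$, reverse inclusion.} Following Lenzmann, I would use that $u_{\infty}$ is radial (Lemma \ref{thm A}) and decompose $w_1=\sum_{l,m}f_{lm}(r)Y_{lm}$ in spherical harmonics. By Newton's theorem, the nonlocal term preserves each sector: on the sector of degree $l$ it acts as $u_{\infty}$ times an explicit radial kernel that is positivity preserving. This gives radial operators $L_{+,l}$ that are increasing in $l$ in the sense of quadratic forms.
\begin{itemize}
\item For $l=1$, the function $u_{\infty}'\le0$ is a sign-definite kernel element of $L_{+,1}$. A Perron--Frobenius argument (valid because the nonlocal part is positivity improving) makes it the ground state of $L_{+,1}$, so the kernel of $L_{+,1}$ is one-dimensional.
\item For $l\ge2$, $L_{+,l}>L_{+,1}\ge0$, so these sectors have trivial kernel.
\end{itemize}
Together these give exactly the three derivatives $\partial_{x_j}u_{\infty}$.

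\textbf{Main obstacle: the radial sector $l=0$.} Here no sign argument is available, since $L_+$ has a negative direction ($\langle L_+u_{\infty},u_{\infty}\rangle<0$). My plan is to rewrite $L_{+,0}f=0$, with $\psi:=\frac{1}{2\pi}\int\frac{u_{\infty}f}{|x-y|}$, as the coupled ODE system
\begin{align*}
-\Delta f+f-\Phi_{u_{\infty}}f-u_{\infty}\psi=0,\qquad -\Delta\psi=2u_{\infty}f .
\end{align*}
I would then compare it with the explicit solutions generated by the scaling family $\lambda^2u_{\infty}(\lambda x)$ and by $r\,\partial_r$. Using the decay of $f$ and $\psi$ at infinity and Wronskian-type identities, I would show that any decaying solution must be a multiple of a combination of these that fails to decay, and hence must vanish. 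This is precisely the step carried out in \cite{Lenzmann2009}, and in the proof I would quote it at this point rather than redo the analysis.
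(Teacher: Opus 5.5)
Your proposal is correct and follows the same route as the paper: the paper gives no argument of its own and simply cites Lenzmann's Remark (1), which consists of the splitting $L=L_+\oplus iL_-$ with $\mathrm{Ker}L_-=\mathrm{span}\{u_{\infty}\}$, together with Lenzmann's theorem $\mathrm{Ker}L_+=\mathrm{span}\{\partial_{x_j}u_{\infty}\}$. Your spherical-harmonics and Perron--Frobenius treatment of the sectors $l\ge 1$ reconstructs Lenzmann's proof, and since you also quote him for the radial sector $l=0$, your argument depends on exactly the same external input as the paper's.
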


Next, we modify the phase and translation of $\tilde{u}_n$ slightly to obtain the following orthogonality relations.
\begin{lem}\label{lem:4}
Suppose that $\{c_n\}$, $\{u_n\}$ satisfy the assumption in Lemma \ref{lem:3} and $\tilde{u}_n$ is defined in \eqref{eq:4}. Then there exists $\{t_n\}\subset\R$ and $\{\tilde{x}_n\}\subset\R^3$ such that 
\begin{align*}
    &e^{it_n}\tilde{u}_n(x+\tilde{x}_n)\rightarrow u_{\infty}\ \ \text{strongly in}\ \ H^1(\R^3,\mathbb{C}),\\
    &\left(e^{it_n}\tilde{u}_n(x+\tilde{x}_n),\frac{\partial u_{\infty}}{\partial x_k}\right)=0,\ \ k=1,2,3,\\
    &\left(e^{it_n}\tilde{u}_n(x+\tilde{x}_n),iu_{\infty}\right)=0.
\end{align*}
\end{lem}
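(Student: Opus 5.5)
We plan to obtain the modulation $(t_n,\tilde{x}_n)$ as a zero of a finite-dimensional map near the origin and to locate that zero with Brouwer degree. For $(y,t)\in\R^3\times\R$ define $F_n:\R^4\to\R^4$ by
\begin{align*}
F_n(y,t):=\Big(\big(e^{it}\tilde{u}_n(\cdot+y),\partial_{x_1}u_{\infty}\big),\big(e^{it}\tilde{u}_n(\cdot+y),\partial_{x_2}u_{\infty}\big),\big(e^{it}\tilde{u}_n(\cdot+y),\partial_{x_3}u_{\infty}\big),\big(e^{it}\tilde{u}_n(\cdot+y),iu_{\infty}\big)\Big),
\end{align*}
and the analogous limit map $F_\infty$, with $\tilde{u}_n$ replaced by $u_{\infty}$. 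Since translations and phase rotations are isometries of $H^1(\R^3,\C)$, we have
\[
|F_n(y,t)-F_\infty(y,t)|\le C\|\tilde{u}_n-u_{\infty}\|.
\]
By Lemma \ref{lem:3} this tends to $0$ uniformly in $(y,t)$. Both maps are continuous, because translation and phase are strongly continuous on $H^1$.

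The key step is to show that $(0,0)$ is a nondegenerate zero of $F_\infty$.

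\emph{Zero at the origin.} Since $u_{\infty}$ is real, $(u_{\infty},iu_{\infty})=\mathrm{Re}\int(\ldots)\,\bar{i}=0$. Since $u_{\infty}$ is radial, $(u_{\infty},\partial_k u_{\infty})=\frac12\int\partial_k(|\nabla u_\infty|^2+V_\infty u_\infty^2)\,dx=0$. Hence $F_\infty(0,0)=0$.

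\emph{Jacobian at the origin.} Differentiating, $\partial_{y_j}F_\infty^k(0,0)=(\partial_j u_{\infty},\partial_k u_{\infty})$, and by radial symmetry this equals $\delta_{jk}\|\partial_1 u_{\infty}\|^2$. Next, $\partial_t F^4_\infty(0,0)=(iu_{\infty},iu_{\infty})=\|u_{\infty}\|^2$. The mixed derivatives are $\partial_t F^k_\infty=(iu_\infty,\partial_k u_\infty)=0$ and $\partial_{y_j}F^4_\infty=(\partial_j u_\infty,iu_\infty)=0$, both because of realness.

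So the Jacobian is diagonal with positive entries, hence invertible. Consequently, for small $r>0$, $F_\infty$ has no zero on $\partial B_r(0)\subset\R^4$, and $\deg(F_\infty,B_r,0)=1$.

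By homotopy invariance, using $\inf_{\partial B_r}|F_\infty|>0$ and the uniform convergence of $F_n$, we get $\deg(F_n,B_r,0)=1$ for large $n$. Thus there is $(\tilde{x}_n,t_n)\in B_r$ with $F_n(\tilde{x}_n,t_n)=0$, which gives the orthogonality relations. Since $r$ can be taken arbitrarily small along a diagonal subsequence, we may arrange $(\tilde{x}_n,t_n)\to0$.

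The convergence statement then follows from
\begin{align*}
\|e^{it_n}\tilde{u}_n(\cdot+\tilde{x}_n)-u_{\infty}\|\le\|\tilde{u}_n-u_{\infty}\|+\|e^{it_n}u_{\infty}(\cdot+\tilde{x}_n)-u_{\infty}\|\to0,
\end{align*}
using strong continuity of the group action.

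The main obstacle is the nondegeneracy computation at the origin, in particular justifying differentiability of $F_\infty$ there. The derivative in $y$ requires $\partial_j u_{\infty}\in H^1$, i.e. $u_\infty\in H^2$, which holds by elliptic regularity and decay of $u_\infty$. Alternatively, one can move the translation onto the fixed test functions, writing $F^k_\infty(y,t)=(u_\infty,e^{-it}\partial_k u_\infty(\cdot-y))$, which is smooth in $(y,t)$. The same trick makes $F_n$ continuous without any regularity of $\tilde{u}_n$. If one prefers to avoid degree theory, the inverse function theorem applied to $F_\infty$, together with a quantitative perturbation argument, would give the same conclusion.
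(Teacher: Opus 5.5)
Your proposal is correct and follows essentially the same route as the paper. It uses the same four-component map built from $\mathrm{Ker}L$, the same uniform convergence to the limit map via Lemma \ref{lem:3}, the same diagonal Jacobian $\mathrm{diag}\bigl(\|\partial_{x_1}u_{\infty}\|^2,\|\partial_{x_2}u_{\infty}\|^2,\|\partial_{x_3}u_{\infty}\|^2,\|u_{\infty}\|^2\bigr)$ at the origin, the same homotopy argument giving degree $1$, and the same diagonal choice of radii forcing $(\tilde{x}_n,t_n)\to 0$. Your use of the $C^1$ inverse function theorem to show that the origin is an isolated zero is actually cleaner than the paper's step: the paper applies the mean value theorem with a single $\theta$ to an $\mathbb{R}^4$-valued map, which is not valid as stated without a row-by-row repair.
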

\begin{proof}
Define the auxiliary map $\psi_n:\R^3\times \R\rightarrow\R^4$ by
\begin{align*}
&\psi_n(z,t):=\left(\psi^1_n(z,t),\psi^2_n(z,t),\psi^3_n(z,t),\psi^4_n(z,t)\right),\\
%\end{align*}
%\begin{align*}
&\psi^k_n(z,t):=\left(e^{it}\tilde{u}_n(x+z),\frac{\partial u_{\infty}}{\partial x_k}\right),\ \ k=1,2,3,\ \ \ \psi^4_n(z,t):=\left(e^{it}\tilde{u}_n(x+z),iu_{\infty}\right).
\end{align*}
By Lemma \ref{lem:3}, there exists $\psi(z,t)=(\psi^1_\ast,\psi^2_\ast,\psi^3_\ast,\psi^4_\ast)$ such that 
\begin{align*}
\psi_n(z,t)\rightarrow \psi(z,t)\ \ \text{uniformly on}\ \ \R^4,
\end{align*}
where
\begin{align*}
   \psi^k_\ast(z,t)=\left(e^{it}u_{\infty}(x+z),\frac{\partial u_{\infty}}{\partial x_k}\right),\ \  k=1,2,3,\ \ \ \psi^4_\ast(z,t)=\left(e^{it}u_{\infty
    }(x+z),iu_{\infty}\right).
\end{align*}
In fact, for $k=1,2,3$,
\begin{align*}
    \left|\psi^k_n(z,t)-\psi^k_\ast(z,t)\right|\leq\left\|e^{it}\tilde{u}_n(x+z)-e^{it}u_{\infty}(x+z)\right\|\left\|\frac{\partial u_{\infty}}{\partial x_k}\right\|=\left\|\tilde{u}_n-u_{\infty}\right\|\left\|\frac{\partial u_{\infty}}{\partial x_k}\right\|\rightarrow0,
\end{align*}
uniformly for $(z,t)\in \R^4$, as $n\rightarrow\infty$. Obviously,
\begin{align*}
    \psi(0,0)=\left(\left(u_{\infty},\frac{\partial u_{\infty}}{\partial x_1}\right),\left(u_{\infty},\frac{\partial u_{\infty}}{\partial x_2}\right),\left(u_{\infty},\frac{\partial u_{\infty}}{\partial x_3}\right),\left(u_{\infty},iu_{\infty}\right)\right).
\end{align*}
For $k=1,2,3$,
\begin{align*}
    \left(u_{\infty},\frac{\partial u_{\infty}}{\partial x_k}\right)=\int_{\R^3}\left(\nabla u_{\infty}\nabla\frac{\partial u_{\infty}}{\partial x_k}+V_{\infty}u_{\infty}\frac{\partial u_{\infty}}{\partial x_k}\right)dx=\int_{\R^3}\Phi_{|u_{\infty}|}u_{\infty}\frac{\partial u_{\infty}}{\partial x_k}dx=0,
\end{align*}
since the integrand function is odd with respect to $x_k$. And
\begin{align*}
    \left(u_{\infty},iu_{\infty}\right)=\mathrm{Re}\left(\int_{\R^3}\left(\nabla u_{\infty}\nabla(iu_{\infty})+V_{\infty}u_{\infty}iu_{\infty}\right)dx\right)=0.
\end{align*}
Thus,
\begin{align*}
    \psi(0,0)=(0,0,0,0).
\end{align*}
For $k=1,2,3$,
\begin{align*}
    \psi_\ast^k(z,t)=\cos t\int_{\R^3}\left(\nabla u_{\infty}(x+z)\nabla\frac{\partial u_{\infty}}{\partial x_k}+V_{\infty}u_{\infty}(x+z)\frac{\partial u_{\infty}}{\partial x_k}\right)dx.
\end{align*}
A direct compupation implies that for $k,j=1,2,3$,
\begin{align*}
    \frac{\partial\psi_\ast^k}{\partial z_j}(0,0)&=\int_{\R^3}\left(\nabla \frac{\partial u_{\infty}}{\partial x_j}\nabla\frac{\partial u_{\infty}}{\partial x_k}+V_{\infty}\frac{\partial u_{\infty}}{\partial x_j}\frac{\partial u_{\infty}}{\partial x_k}\right)dx=
    \left\{
\begin{aligned}
    &\left\|\frac{\partial u_{\infty}}{\partial x_k}\right\|^2,\ \ &j=k;\\
    &0,\ \ &j\neq k,
\end{aligned}
    \right.
    \\
    \frac{\partial\psi_\ast^k}{\partial t}(0,0)&=0.
\end{align*}
For $\psi_\ast^4$, we have
\begin{align*}
    &\psi_\ast^4(z,t)=\sin t\int_{\R^3}\left(\nabla u_{\infty}(x+z)\nabla u_{\infty}+V_{\infty}u_{\infty}(x+z)u_{\infty}\right)dx,\\
    &\frac{\partial\psi_\ast^4}{\partial z_j}(0,0)=0,\ \ j=1,2,3,\\
    &\frac{\partial\psi_\ast^4}{\partial t}(0,0)=\int_{\R^3}\left(|\nabla u_{\infty}|^2+V_{\infty}u_{\infty}^2\right)dx=\left\|u_{\infty}\right\|^2.
\end{align*}
Therefore, we conclude that
\begin{align*}
    D\psi(0,0)=
    \begin{pmatrix}
        \left\|\frac{\partial u_{\infty}}{\partial x_1}\right\|^2 &  &  &  \\
          &  \left\|\frac{\partial u_{\infty}}{\partial x_2}\right\|^2 &  &  \\
          &  &  \left\|\frac{\partial u_{\infty}}{\partial x_3}\right\|^2 & \\
          &  &  & \left\|u_{\infty}\right\|^2
    \end{pmatrix}.
\end{align*}

%For every small $r>0$, we claim that $\psi(z,t)\neq\vec{0}$ on $\overline{B_r}\setminus\{0\}$. In fact, 
Moreover, since $u_{\infty}$ is radial, it holds that
\begin{align*}
    \left|D\psi(0,0)\right|=\left\|\frac{\partial u_{\infty}}{\partial x_1}\right\|^6 \left\|u_{\infty}\right\|^2.
\end{align*}
Combining the continuity of $D\psi(z,t)$, there exists $r>0$ such that
\begin{align*}
    \left|D\psi(z,t)\right|\geq\frac{1}{2}\left\|\frac{\partial u_{\infty}}{\partial x_1}\right\|^6 \left\|u_{\infty}\right\|^2,\ \ \text{for}\ \ (z,t)\in \overline{B_r}.
\end{align*}
Then, we claim that $\psi(z,t)\neq\vec{0}$ on $\overline{B_r}\setminus\{0\}$.
Suppose, to the contrary, that there exists $(z^{*},t^{*})\in \overline{B_r}\setminus\{0\}$ such that $\psi(z^{*},t^{*})=\vec{0}$. Then, by the differential mean value theorem, there exists $\theta\in(0,1)$ such that
\begin{align*}
    (z^{*},t^{*})D\psi(\theta z^{*},\theta t^{*})=\psi(z^{*},t^{*})-\psi(0,0)=\vec{0},
\end{align*}
which implies that $(z^{*},t^{*})=(0,0)$. This is a contradiction. Thus, the Brouwer degree $\deg (\psi,B_r,\vec{0})$ is well-defined and 
\begin{align*}
   \deg (\psi,B_r,\vec{0})=\mathrm{sgn}\left|D\psi(0,0)\right|=1.
\end{align*}
Define
$
\delta_1:=\min\limits_{(z,t)\in\partial B_r}\left|\psi(z,t)\right|>0,
$
%\begin{align*}
 %   \deg (\psi,B_r,0)=\mathrm{sgn}\left|D\psi(0,0)\right|=1,\ \ \ \delta_1:=\min\limits_{(z,t)\in\partial B_r}\left|\psi(z,t)\right|>0.
%\end{align*}
since $\psi_n\rightarrow \psi$ uniformly on $\R^4$, then for $n$ large enough,
\begin{align*}
    \min\limits_{(z,t)\in\partial B_r}\left|\psi_n(z,t)\right|\geq\frac{\delta_1}{2},
\end{align*}
which means that $\deg (\psi_n,B_r,\vec{0})$ is well-defined. Define the homotopy
\begin{align*}
    H_n:\overline{B_r}\times[0,1]\rightarrow\R^4,\ \ H_n(z,t,s):=s\psi(z,t)+(1-s)\psi_n(z,t).
\end{align*}
It is easy to check that for $s\in[0,1]$, $H_n(z,t,s)\neq\vec{0}$ on $\partial B_r$. By the homotopy invariance of the topological degree, we have
\begin{align*}
\deg(\psi_n,B_r,\vec{0})=\deg(H_n(z,t,0),B_r,\vec{0})=\deg(H_n(z,t,1),B_r,\vec{0})=\deg(\psi,B_r,\vec{0})=1.
\end{align*}
It yields that there exists $\left\{(\tilde{x}_n,t_n)\right\}\subset B_r$ such that $\psi_n(\tilde{x}_n,t_n)=\vec{0}$, that is,
\begin{align*}
    e^{it_n}\tilde{u}_n(x+\tilde{x}_n)\perp \mathrm{Ker}L.
\end{align*}

%Now we will prove $(\tilde{x}_n,t_n)\rightarrow0$ in order to get the strongly congervence. 
Choose $\{r_i\}\subset(0,r)$ with $r_i\rightarrow0$. For each $i$, there exists $\{(\tilde{x}_{n}^{(i)},t_{n}^{(i)})\}\subset B_{r_i} $ such that $\psi_{n}(\tilde{x}_{n}^{(i)},t_{n}^{(i)})=\vec{0}$ for $n$ large enough.
By a diagonal argument, we extract a subsequence, still denoted by $\{(\tilde{x}_n,t_n)\}$, which converges to $\vec{0}$.
%Using a standard diagonal argument to extract a subsequence,
% still denote $\{(\tilde{x}_{n_i},t_{n_i})\}$ by $\{(\tilde{x}_n,t_n)\}$, we can get $(\tilde{x}_n,t_n)\rightarrow\vec{0}$. 
Denote 
\begin{align*}%\label{u}
    \hat{u}_n(x):=e^{it_n}\tilde{u}_n(x+\tilde{x}_n),
\end{align*}
then, it holds $\hat{u}_n\perp \mathrm{Ker}L$ 
and
\begin{align*}
    \left\|\hat{u}_n-u_{\infty}\right\|&= \left\|e^{it_n}\tilde{u}_n(x+\tilde{x}_n)-u_{\infty}\right\|\\
    &\leq\left\|e^{it_n}\tilde{u}_n(x+\tilde{x}_n)-e^{it_n}u_{\infty}(x+\tilde{x}_n)\right\|+\left\|e^{it_n}u_{\infty}(x+\tilde{x}_n)-u_{\infty}\right\|\\
    &=\left\|\tilde{u}_n-u_{\infty}\right\|+o_n(1)=o_n(1).
\end{align*}  
\end{proof}

Lemma \ref{lem:4} gives a nice property for the sequence of critical points $\{u_n\}$, that is,
\begin{align*}
    \hat{u}_n(x):=e^{it_n}\tilde{u}_n(x+\tilde{x}_n)\perp \mathrm{Ker}L.
\end{align*}

\noindent\textbf{Proof of Theorem \ref{th1.0}.} 
Suppose by contradiction that $c_{\infty}$ is not an isolated critical level of $I_{\infty}$. Then there exists a critical value sequence $\{c_n\}$ satisfying $c_n\searrow c_{\infty}$ and a corresponding critical point sequence $\{u_n\}\subset H^1(\R^3,\mathbb{C})$. Let $\hat{u}_n$ be the function defined above. Denote 
\begin{align*}
    w_n:=\frac{\hat{u}_n-u_{\infty}}{\left\|\hat{u}_n-u_{\infty}\right\|}.
\end{align*}
Since $\hat{u}_n$ and $u_{\infty}$ are both solutions of equation \eqref{1.8.2}, we have
\begin{align*}
    -\Delta(\hat{u}_n-u_{\infty})+V_\infty(\hat{u}_n-u_{\infty})&=\Phi_{|\hat{u}_n|}\hat{u}_n-\Phi_{|u_{\infty}|}u_{\infty}\\
    &=\Phi_{|\hat{u}_n|}(\hat{u}_n-u_{\infty})+\frac{1}{4\pi}u_{\infty}\int_{\R^3}\frac{|\hat{u}_n(y)|^2-|u_{\infty}(y)|^2}{|x-y|}dy\\
    &=\Phi_{|\hat{u}_n|}(\hat{u}_n-u_{\infty})+\frac{1}{4\pi}u_{\infty}\int_{\R^3}\frac{\mathrm{Re}\left(\big[\hat{u}_n(y)+u_{\infty}(y)\big]\big[\overline{\hat{u}_n}(y)-u_{\infty}(y)\big]\right)}{|x-y|}dy.
\end{align*}
Dividing both sides by $\left\|\hat{u}_n-u_{\infty}\right\|$, we get
\begin{align}\label{eq:6}
    -\Delta w_n+V_\infty w_n=\Phi_{|\hat{u}_n|}w_n+\frac{1}{4\pi}u_{\infty}\int_{\R^3}\frac{\mathrm{Re}\left((\hat{u}_n(y)+u_{\infty}(y))\overline{w_n}(y)\right)}{|x-y|}dy.
\end{align}
Note that $\|w_n\|=1$. So, there exists $w_0\in H^1(\R^3,\mathbb{C})$ such that, up to a subsequence,
\begin{equation}\label{2.12}
\left\{
    \begin{aligned}
        &w_n\rightharpoonup w_0\ \ \text{in}\ \ H^1(\R^3,\mathbb{C}),\\
        &w_n\rightarrow w_0\ \ \text{in}\ \ L^p_{loc}(\R^3,\mathbb{C}),p\in[2,6),\\
        &w_n\rightarrow w_0\ \ \text{a.e. on}\ \ \R^3.
    \end{aligned}
    \right.
\end{equation}
%Since $u_{\infty}$ is real-valued, Then, $\mathrm{Re}(u_{\infty}\bar{w}_0)=\mathrm{Re}(u_{\infty}w_0)$,
%Moreover, for each $\varphi\in H^1(\R^3,\mathbb{C})$, it holds
%\[
%\lim_{n\to\infty}\mathrm{Re}\left(\int_{\R^3}\overline{w_n}\varphi dx\right)=\lim_{n\to\infty}\mathrm{Re}\left(\int_{\R^3}w_n\overline{\varphi}dx\right)=\mathrm{Re}\left(\int_{\R^3}w_0\overline{\varphi}dx\right)=\mathrm{Re}\left(\int_{\R^3}\overline{w_0}\varphi dx\right).
%\]
Since $\hat{u}_n\to u_{\infty}$ in $H^1(\R^3,\mathbb{C})$, we deduce from \eqref{eq:6} and \eqref{2.12} that
 $w_0$ satisfies
\begin{align}\label{eq:7}
    -\Delta w_0+V_\infty w_0=\Phi_{|u_{\infty}|}w_0+\frac{1}{4\pi}u_{\infty}\int_{\R^3}\frac{\mathrm{Re}\left(2u_{\infty}(y)\overline{w_0}(y)\right)}{|x-y|}dy,
\end{align}
which %together with $\mathrm{Re}(u_{\infty}\bar{w}_0)=\mathrm{Re}(u_{\infty}w_0)$, 
yields that
\begin{align}\label{eq:5}
    w_0\in \mathrm{Ker}L.
\end{align}
According to Lemma \ref{lem:4}, 
\begin{align*}
    \left(\hat{u}_n,\frac{\partial u_{\infty}}{\partial x_k}\right)=0,\ \ \ \left(\hat{u}_n,iu_{\infty}\right)=0,\ \ \ \left(u_{\infty},\frac{\partial u_{\infty}}{\partial x_k}\right)=0,\ \ \ \left(u_{\infty},iu_{\infty}\right)=0,\ k=1,2,3.
\end{align*}
So there holds
\begin{align*}
    \left(w_n,\frac{\partial u_{\infty}}{\partial x_k}\right)=0,\ \ \ \left(w_n,iu_{\infty}\right)=0,\ k=1,2,3.
\end{align*}
Thus,
\begin{align*}
    \left(w_0,\frac{\partial u_{\infty}}{\partial x_k}\right)=0,\ \ \ \left(w_0,iu_{\infty}\right)=0,\ k=1,2,3,
\end{align*}
which gives $w_0\perp \mathrm{Ker}L$.
Combining with \eqref{eq:5}, we conclude $w_0=0$.

Next, we prove $\|w_0\|=1$. By \eqref{eq:6} and \eqref{eq:7}, 
\begin{align*}
    \|w_n\|^2&=\int_{\R^3}\Phi_{|\hat{u}_n|}|w_n|^2dx+\frac{1}{4\pi}\int_{\R^3}\int_{\R^3}\frac{\mathrm{Re}\left[(\hat{u}_n(y)+u_{\infty}(y))\overline{w_n}(y)\right]}{|x-y|}dyu_{\infty}(x)\mathrm{Re}(w_n(x))dx,\\
    \|w_0\|^2&=\int_{\R^3}\Phi_{|u_{\infty}|}|w_0|^2dx+\frac{1}{4\pi}\int_{\R^3}\int_{\R^3}\frac{\mathrm{Re}\left(2u_{\infty}(y)\overline{w_0}(y)\right)}{|x-y|}dyu_{\infty}(x)\mathrm{Re}(w_0(x))dx.
\end{align*}
Taking $R$ large enough, by the H\"older inequality, the Sobolev inequality and the Hardy-Littlewood-Sobolev inequality \eqref{HLS1}, we have
\begin{align*}
    &\left|\int_{\R^3}\Phi_{|\hat{u}_n|}|w_n|^2dx-\int_{\R^3}\Phi_{|u_{\infty}|}|w_0|^2dx\right|\\
    \leq&\left|\int_{\R^3}\left(\Phi_{|\hat{u}_n|}-\Phi_{|u_{\infty}|}\right)|w_n|^2dx\right|+\left|\int_{\R^3}\Phi_{|u_{\infty}|}\left(|w_n|^2-|w_0|^2\right)dx\right|\\
    \leq&\left\|\Phi_{|\hat{u}_n|}-\Phi_{|u_{\infty}|}\right\|_{L^6(\R^3)}\left\|w_n\right\|^2_{L^{\frac{12}{5}}(\R^3)}
    +\left\|\Phi_{|u_{\infty}|}\right\|_{L^6(B_R)}\left\||w_n|^2-|w_0|^2\right\|_{L^{\frac{6}{5}}(B_R)}\\
   & +\left\|\Phi_{|u_{\infty}|}\right\|_{L^6(\R^3 \setminus B_R)}\left\||w_n|^2-|w_0|^2\right\|_{L^{\frac{6}{5}}(\R^3 \setminus B_R)} \\
   \leq &C\left\||\hat{u}_n|^2-|u_{\infty}|^2\right\|_{L^{\frac{6}{5}}(\R^3)}+C\left\||w_n|^2-|w_0|^2\right\|_{L^{\frac{6}{5}}(B_R)}+\left\|\Phi_{|u_{\infty}|}\right\|_{L^6(\R^3 \setminus B_R)}\\
    =&o_n(1).
\end{align*}
Similarly, according to the Fubini theorem, we obtain
\begin{align*} &\left|\int_{\R^3}\int_{\R^3}\frac{\mathrm{Re}\left[(\hat{u}_n(y)+u_{\infty}(y))\overline{w_n}(y)\right]}{|x-y|}dyu_{\infty}(x)\mathrm{Re}(w_n(x))dx-\int_{\R^3}\int_{\R^3}\frac{2u_{\infty}(y)\mathrm{Re}\left(w_0(y)\right)}{|x-y|}dyu_{\infty}(x)\mathrm{Re}(w_0(x))dx\right|\\
\leq&\left|\int_{\R^3}\int_{\R^3}\frac{\mathrm{Re}\left[(\hat{u}_n(y)-u_{\infty}(y))\overline{w_n}(y)\right]}{|x-y|}dyu_{\infty}(x)\mathrm{Re}(w_n(x))dx\right|\\
&+\left|\int_{\R^3}\int_{\R^3}\frac{2u_{\infty}(y)\mathrm{Re}\left(w_n(y)-w_0(y)\right)}{|x-y|}dyu_{\infty}(x)\mathrm{Re}(w_n(x))dx\right|\\
&+\left|\int_{\R^3}\int_{\R^3}\frac{2u_{\infty}(y)\mathrm{Re}\left(w_0(y)\right)}{|x-y|}dyu_{\infty}(x)\mathrm{Re}(w_n(x)-w_0(x))dx\right|\\
    \leq&\int_{\R^3}\int_{\R^3}\frac{\left|(\hat{u}_n(y)-u_{\infty}(y))w_n(y)\right|}{|x-y|}dyu_{\infty}(x)|w_n(x)|dx+2\int_{\R^3}\int_{\R^3}\frac{u_{\infty}(y)|w_n(y)|
    }{|x-y|}dxu_{\infty}(x)|w_n(x)-w_0(x)|dy\\
    &+2\int_{\R^3}\int_{\R^3}\frac{u_{\infty}(y)|w_0(y)|}{|x-y|}dyu_{\infty}(x)|w_n(x)-w_0(x)|dx\\
    \leq&C\int_{\R^3}\Phi_{|\hat{u}_n-u_{\infty}|}^{\frac{1}{2}}\Phi_{|w_n|}^{\frac{1}{2}}u_{\infty}|w_n|dx+C\int_{\R^3}\Phi_{|u_{\infty}|}^{\frac{1}{2}}\Phi_{|w_n|}^{\frac{1}{2}}u_{\infty}|w_n-w_0|dx+C\int_{\R^3}\Phi_{|u_{\infty}|}^{\frac{1}{2}}\Phi_{|w_0|}^{\frac{1}{2}}u_{\infty}|w_n-w_0|dx\\
    =&o_n(1).
\end{align*}
Thus, $\|w_0\|=\lim\limits_{n\rightarrow\infty}\|w_n\|=1$. This is a contradiction with $w_0=0$.
\qed

\section{Proof of Theorem \ref{th1.4}}\label{s2}
In this section, we provide the detailed proof of Theorem \ref{th1.4}. 
First, we establish the equivalence between  $\|u\|_A$ and $\|u\|$.%, which is given as follows.%_{H^1(\R^3)}$.
\begin{lem}\label{A}
Suppose $(A)$ and $(V)$ hold, then $H^1_A(\mathbb{R}^3,\C)=H^1(\mathbb{R}^3,\C)$. Moreover, there exist $C_2>C_1>0$ such that 
\begin{align}\label{A00}
    C_1\|u\|\leq \|u\|_A\leq C_2\|u\|\ \ \text{for all}\ \ u\in H^1_A(\mathbb{R}^3,\C).
\end{align}
\end{lem}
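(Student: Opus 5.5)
Two things have to be shown: the sets coincide, and the norms are equivalent. The plan is to compare the quadratic forms $\int(|\nabla_A u|^2+V|u|^2)dx$ and $\int(|\nabla u|^2+V_\infty|u|^2)dx$ directly on $C_c^\infty(\R^3,\C)$. Then extend by density to the completions, which are $H^1(\R^3,\C)$ and $H^1_A(\R^3,\C)$ respectively. The inputs are $(A)$ and $(V)$. By $(A)$, $A$ is continuous and tends to $0$, so $A\in L^\infty(\R^3,\R^3)$ with $M:=\sup|A|<\infty$. By $(V)$, $V$ is continuous with limit $V_\infty$, so $0<V_0\le V\le \|V\|_\infty<\infty$.

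\textbf{Upper bound.} Write $\nabla_A u=-i\nabla u+Au$. Then $|\nabla_A u|^2\le 2|\nabla u|^2+2M^2|u|^2$. Together with $V\le\|V\|_\infty$ this gives
\begin{align*}
\|u\|_A^2\le 2\int_{\R^3}|\nabla u|^2dx+(2M^2+\|V\|_\infty)\int_{\R^3}|u|^2dx\le C_2^2\|u\|^2 .
\end{align*}
In particular $H^1(\R^3,\C)\subset H^1_A(\R^3,\C)$.

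\textbf{Lower bound.} Conversely, $|\nabla u|^2=|\nabla_A u-Au|^2\le 2|\nabla_A u|^2+2M^2|u|^2$. Since $V\ge V_0>0$, we have $\int|u|^2dx\le V_0^{-1}\int V|u|^2dx$. Combining,
\begin{align*}
\|u\|^2\le 2\int_{\R^3}|\nabla_A u|^2dx+\frac{2M^2+V_\infty}{V_0}\int_{\R^3}V|u|^2dx\le C_1^{-2}\|u\|_A^2 .
\end{align*}
This shows that any $u\in L^2$ with $\nabla_A u\in L^2$ also has $\nabla u=i(\nabla_A u-Au)\in L^2$, because $Au\in L^2$. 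So the inclusion holds in the other direction too. One may shrink $C_1$ so that $C_1<C_2$.

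The only delicate point is to make sure these manipulations are legitimate for distributional derivatives rather than only for smooth functions. Since $A$ is merely continuous and bounded, $Au$ is an honest $L^2$ function whenever $u\in L^2$. The identity $\nabla_A u=-i\nabla u+Au$ then holds in $\mathcal{D}'$, and both estimates are valid pointwise almost everywhere for every $u$ in either space, with no density argument needed. I do not expect any real obstacle here; the key step is noting that $(A)$ gives boundedness of $A$ and $(V)$ gives the positive lower bound $V_0$.
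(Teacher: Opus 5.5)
Your proposal is correct and follows the paper's proof essentially verbatim: boundedness of $A$ and the two-sided bound $V_0\le V\le \sup V$ from $(A)$ and $(V)$, combined with $|a\pm b|^2\le 2|a|^2+2|b|^2$ applied to $\nabla_A u=-i\nabla u+Au$ in both directions. The density-on-$C_c^\infty$ framing in your first paragraph is superfluous, as you yourself conclude, because the pointwise estimates apply directly to every $u$ in either space as the paper defines them.
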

\begin{proof}
By $(V)$ and $(A)$, we know that $A$ and $V$ are bounded in $\R^3$.
    Denote $V_1:=\sup_{x\in \mathbb{R}^3}V(x)\geq V_0$ and $A_0:=\sup_{x\in \mathbb{R}^3}|A(x)|$.
   Note that for all $u\in L^2(\R^3,\mathbb{C})$, it holds 
\begin{align}\label{A01}
    V_0\|u\|_{L^2(\R^3)}^2\le \int_{\R^3}V(x)|u|^2dx\le V_1\|u\|_{L^2(\R^3)}^2.
\end{align}
On the one hand, for all $u\in H_A^1(\R^3,\mathbb{C})$, we can prove that there exists $C_1>0$ such that 
\begin{align}
\label{A02}
    C_1\|u\|\leq \|u\|_A.
   % C_3\|\nabla u\|_{L^2(\R^3)}\le \|\nabla_A u\|_{L^2(\R^3)}.
    %\le C_4\|\nabla u\|_{L^2(\R^3)}.
\end{align}
In fact, 
\begin{align}\label{A03}
    &\|\nabla u\|_{L^2(\R^3)}^2=\int_{\R^3}|-i\nabla u|^2dx
    =\int_{\R^3}|-i\nabla u+A(x)u-A(x)u|^2dx\nonumber\\
    \le &2\int_{\R^3}|-i\nabla u+A(x)u|^2dx+2\int_{\R^3}|A(x)u|^2dx
    \le 2\|\nabla_Au\|_{L^2(\R^3)}^2+2A_0^2\|u\|_{L^2(\R^3)}^2.
\end{align}
So, combining \eqref{A01} with \eqref{A03}, we obtain \eqref{A02}, which implies $H_A^1(\R^3,\mathbb{C})\subset H^1(\R^3,\mathbb{C)}$. 

On the other hand, for all $u\in H^1(\R^3,\mathbb{C})$, we can also prove that there exists $C_2>0$ such that 
\begin{align}
\label{A04}
   \|u\|_A\leq C_2\|u\|.
\end{align}
Indeed, % Similar to \eqref{A03}, it holds that
\[
\|\nabla_A u\|_{L^2(\R^3)}^2
\leq \int_{\R^3}(|\nabla u|+|A(x)u|)^2dx
\le 2\|\nabla u\|_{L^2(\R^3)}^2+2A_0^2\|u\|_{L^2(\R^3)}^2,
\]
which, together with \eqref{A01}, yields \eqref{A04}. So,  $H^1(\R^3,\mathbb{C)}\subset H_A^1(\R^3,\mathbb{C)}$. Hence, $H^1_A(\mathbb{R}^3,\C)=H^1(\mathbb{R}^3,\C)$ and \eqref{A00} holds.
\end{proof}

\begin{rem}
    In general, the spaces $H^1_A(\R^3,\C)$ and $H^1(\R^3,\C)$ are incomparable. More precisely, $H^1_A(\R^3,\C)\not \subset H^1(\R^3,\C)$  and   $H^1(\R^3,\C)\not \subset H^1_A(\R^3,\C) $. If, in addition, $A$ and $V$ are bounded and $\inf_{x\in\R^3} V(x) > 0$, they are equivalent.
\end{rem}

Next,  we are going to prove Theorem \ref{th1.4}. Similar to Lemma \ref{lm2.0.1}, we have the following lemma.
%know that , and we will prove some properties of $c$.
%Firstly, we show that the Nehari manifold $\mathcal{N}$ defined by \eqref{1.12} is non-empty.
 \begin{lem}\label{lm2.1}
    Suppose $(A)$ and $(V)$ hold. For $v\in H^1_A(\mathbb{R}^3,\mathbb{C})\setminus \{0\}$, there exists a unique $t_v>0$ such that $t_vv\in \mathcal{N}$. Moreover, $t_v\in (0,1)$ if $J(v)<0$, while $t_v>1$ if $J(v)>0$.
%      %Especially, $\mathcal{N}$ is non-empty.
 \end{lem}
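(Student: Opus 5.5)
The plan is to copy the proof of Lemma \ref{lm2.0.1}, replacing $I_\infty$ by $I$. Since $(A)$ and $(V)$ hold, Lemma \ref{A} shows that $\|\cdot\|_A$ is a norm equivalent to the $H^1$ norm on $H^1_A(\R^3,\C)=H^1(\R^3,\C)$. Fix $v\in H^1_A(\R^3,\C)\setminus\{0\}$ and define the fibering map
\begin{align*}
g_v(t):=I(tv)=\frac{t^2}{2}\|v\|_A^2-\frac{t^4}{4}\int_{\R^3}\Phi_{|v|}|v|^2dx,\qquad t>0.
\end{align*}
The first point to check is that both coefficients are positive and finite. We have $\|v\|_A>0$ because $v\neq 0$ and, by \eqref{A01}, $\|v\|_A^2\ge V_0\|v\|^2_{L^2(\R^3)}>0$. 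For the nonlocal term, note that $\Phi_{|v|}(x)=\frac{1}{4\pi}\int_{\R^3}\frac{|v(y)|^2}{|x-y|}dy>0$ wherever it is defined, since $|v|^2\ge0$ and $|v|^2\not\equiv0$. Hence $\int_{\R^3}\Phi_{|v|}|v|^2dx>0$. Its finiteness follows from \eqref{HLS2} with $\lambda=1$ and $p=s=\frac{6}{5}$, applied to $|v|^2$, combined with the embedding $H^1_A(\R^3,\C)\hookrightarrow L^{12/5}(\R^3,\C)$, which the paper derives from the diamagnetic inequality \eqref{nor}.

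Next I compute the derivative of the fibering map:
\begin{align*}
g_v'(t)=t\left(\|v\|_A^2-t^2\int_{\R^3}\Phi_{|v|}|v|^2dx\right).
\end{align*}
On $(0,\infty)$ this has exactly one zero, namely
\begin{align*}
t_v=\|v\|_A\left(\int_{\R^3}\Phi_{|v|}|v|^2dx\right)^{-\frac12}.
\end{align*}
We have $J(tv)=\langle I'(tv),tv\rangle=t\,g_v'(t)$. This is because $\Phi_{|tv|}=t^2\Phi_{|v|}$, so
\begin{align*}
J(tv)=t^2\|v\|_A^2-t^4\int_{\R^3}\Phi_{|v|}|v|^2dx.
\end{align*}
It follows that $tv\in\mathcal{N}$ exactly when $t=t_v$, which gives both existence and uniqueness.

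For the last claim, observe that $J(v)=\|v\|_A^2-\int_{\R^3}\Phi_{|v|}|v|^2dx$, so the sign of $J(v)$ is the sign of $t_v^2-1$. Therefore $J(v)<0$ gives $t_v\in(0,1)$, and $J(v)>0$ gives $t_v>1$.

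I do not expect any step to be a real obstacle. The only point needing care is the positivity and finiteness of $\int_{\R^3}\Phi_{|v|}|v|^2dx$ on $H^1_A$, and Lemma \ref{A} together with the Hardy–Littlewood–Sobolev inequality settles it.
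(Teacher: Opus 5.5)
Your proposal is correct and follows the paper's argument. The paper proves this lemma by the same fibering-map computation as in Lemma \ref{lm2.0.1}, with $t_v=\|v\|_A\left(\int_{\R^3}\Phi_{|v|}|v|^2dx\right)^{-1/2}$. Your check that both coefficients of $I(tv)$ are positive and finite is a detail the paper leaves implicit.
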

% \noindent{\bf Proof.}
% The proof is similar to Lemma \ref{lm2.0.1}, we omit the details.
% For each $v\in H_A^1(\mathbb{R}^3,\mathbb{C})\setminus \{0\}$ and $t\in \R^+$, define
% \begin{align*}
%         f_v(t):=I(tv)=\frac{t^2}{2}\|v\|_A^2-\frac{t^4}{4}\int_{\R^3}\Phi_v |v|^2 dx.
% \end{align*}
% One has
% \begin{align*}
%         f'_v(t)=t\left(\|v\|_A^2-t^2\int_{\R^3}\Phi_v |v|^2 dx\right),
% \end{align*}
% and then $f_v$ has a unique critical point 
% \begin{align}\label{N1}
%     t_v=\|v\|_A\left(\int_{\R^3}\Phi_v |v|^2 dx\right)^{-\frac{1}{2}}.
% \end{align}
% Obviously, $t_vv\in \mathcal{N}$. Moreover, $t_v\in (0,1)$ if $J(v)<0$, while $t_v>1$ if $J(v)>0$.
%\qed
% \begin{rem}\label{r1}
%     Similar to Lemma \ref{lm3.1}, we know for all $u\in H_A^1(\R^3,\mathbb{C})$ satisfies $J_{\infty}(u)<0$, there exists a unique $t\in(0,1)$ such that $tu\in \mathcal{N}_{\infty}$.
% \end{rem}
%By Lemma \ref{lm2.1}, 
% Hence, the ground state energy $c$ defined by \eqref{1.11} is well-defined, and we will prove some properties of $c$.

%The following lemmas show some properties of $c$.
So, the Nehari manifold $\mathcal{N}$ defined by \eqref{1.12} is non-empty and hence the ground state energy $c$ defined by \eqref{1.11} is well-defined.
\begin{lem}\label{lm3.1}
    Suppose $(A)$ and $(V)$ hold, then $c>0$. Moreover, if $u\in \mathcal{N}$ is a minimizer of $c$, then $u$ is a ground state solution of equation \eqref{eqs1.1.0}.
\end{lem}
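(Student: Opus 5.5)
To show $c>0$ we will first bound the Nehari manifold away from the origin. Take $u\in\mathcal{N}$. Then $J(u)=0$, which reads $\|u\|_A^2=\int_{\R^3}\Phi_{|u|}|u|^2dx$. By the Hardy-Littlewood-Sobolev inequality \eqref{HLS2} with $\lambda=1$ and $p=s=\frac65$, applied to $|u|^2$, we get $\int_{\R^3}\Phi_{|u|}|u|^2dx\le C\|u\|_{L^{12/5}(\R^3)}^4$. The Sobolev embedding and Lemma \ref{A} then turn this into $C\|u\|_A^4$. Hence $\|u\|_A\ge\alpha>0$ on $\mathcal{N}$. On $\mathcal{N}$ we also have $I(u)=I(u)-\frac14J(u)=\frac14\|u\|_A^2\ge\frac{\alpha^2}{4}$, so $c\ge\alpha^2/4>0$.

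For the second assertion we use the Lagrange multiplier rule on the $C^1$ constraint $\mathcal{N}=\{J=0\}\setminus\{0\}$. First we check that $\mathcal{N}$ is a natural constraint. Since $\int\Phi_{|u|}|u|^2$ is homogeneous of degree $4$, we have $\langle J'(u),u\rangle=2\|u\|_A^2-4\int\Phi_{|u|}|u|^2dx$. On $\mathcal{N}$ this equals $-2\|u\|_A^2\le-2\alpha^2<0$. So $J'(u)\neq0$ on $\mathcal{N}$, and $\mathcal{N}$ is a $C^1$ manifold of codimension one. This requires $J\in C^1$, which follows from $I\in C^1$ and the smoothness of the quartic nonlocal term via HLS.

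Now let $u$ be a minimizer. There is $\mu\in\R$ with $I'(u)=\mu J'(u)$. Testing with $u$ gives $0=J(u)=\langle I'(u),u\rangle=\mu\langle J'(u),u\rangle=-2\mu\|u\|_A^2$, so $\mu=0$ and $I'(u)=0$. Thus $u$ is a nontrivial critical point with $I(u)=c$. Every nontrivial critical point lies in $\mathcal{N}$, so $u$ has least energy among all nontrivial solutions, i.e.\ it is a ground state solution.

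If one wishes to avoid manifold regularity, an alternative is a deformation argument using the fibering map $t\mapsto I(tv)$ from Lemma \ref{lm2.1}. This follows the standard Willem approach: if $I'(u)\ne0$, deform $u$ slightly and project back to $\mathcal{N}$ to get energy below $c$. The only mildly delicate point is verifying that $J$ is $C^1$ with the stated derivative. This is routine: differentiate the nonlocal term and estimate it with Lemma \ref{lm2.0.2}.
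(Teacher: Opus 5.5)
Your proposal is correct and follows essentially the same route as the paper: the HLS--Sobolev bound $\|u\|_A^2=\int_{\R^3}\Phi_{|u|}|u|^2dx\le C\|u\|_A^4$ bounds $\mathcal{N}$ away from $0$, which gives $c>0$ via $I=I-\frac14 J$. The multiplier is then killed by testing with $u$ and using $\langle J'(u),u\rangle=-2\|u\|_A^2<0$; your explicit remark that this same computation makes $\mathcal{N}$ a $C^1$ manifold, so that the multiplier rule applies, is a detail the paper leaves implicit.
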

\noindent{\bf Proof.}
For all $u\in \mathcal{N}$, by the Hardy-Littlewood-Sobolev inequality \eqref{HLS2} and the Sobolev inequality, we have
\begin{align*}%\label{n}
    \|u\|_A^2=\int_{\R^3}\Phi_{|u|} |u|^2dx\le C\| u \|_{L^{\frac{12}{5}}(\R^3)}^4\le C\|u\|_A^4.
\end{align*}
So, there exists $\rho>0$ such that $\|u\|_A\ge \rho$ for all $u\in \mathcal{N}$. Thus, 
$$I(u)=I(u)-\frac{1}{4}J(u)=\frac{1}{4}\|u\|_A^2\ge \frac{1}{4}\rho^2, \ \text{ for all } \   u\in \mathcal{N},$$
which implies that $c\ge \frac{1}{4}\rho^2>0$. 

Moreover, suppose that $u\in \mathcal{N}$ is a minimizer of $c$. It follows from the Lagrange multiplier method that there exists $\lambda \in \R$ such that 
\begin{align}\label{3.3.1}
    I'(u)- \lambda J'(u)=0.
\end{align}
We only need to prove $\lambda =0$. In fact, we deduce from \eqref{3.3.1} that 
\begin{align*}
  \lambda \langle J'(u),u\rangle 
  =\langle I'(u),u\rangle 
  =0.
\end{align*}
Notice that 
\begin{align*}
    \langle J'(u),u\rangle=2\|u\|_A^2-4\int_{\R^3}\Phi_{|u|} |u|^2dx
    =-2\|u\|_A^2<0,
\end{align*}
which gives $\lambda =0$. So, $u$ is a ground state solution of equation \eqref{eqs1.1.0}.
\qed

In the following paragraph, we study the relationship between the ground state energy of equation \eqref{eqs1.1.0} and that of equation \eqref{1.8.2}, which is vital to proving Theorem \ref{th1.4}. 
\begin{lem}\label{lm3.3}
      Suppose $(A)$ and $(V)$ hold. Then $c\leq c_{\infty}$. 
\end{lem}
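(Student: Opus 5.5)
We prove $c\le c_{\infty}$ by testing the Nehari characterization of $c$ with translates of the limit ground state $u_{\infty}$ sent to infinity. For $y\in\R^3$ set $v_y(x):=u_{\infty}(x-y)\in H^1(\R^3,\C)=H^1_A(\R^3,\C)$ (Lemma \ref{A}). By Lemma \ref{lm2.1} there is a unique $t_y>0$ with $t_yv_y\in\mathcal N$, namely
\begin{align*}
t_y^2=\|v_y\|_A^2\left(\int_{\R^3}\Phi_{|v_y|}|v_y|^2dx\right)^{-1}.
\end{align*}
Hence
\begin{align*}
c\le I(t_yv_y)=\frac{t_y^2}{4}\|v_y\|_A^2=\frac{\|v_y\|_A^4}{4\int_{\R^3}\Phi_{|v_y|}|v_y|^2dx}.
\end{align*}
The nonlocal term is translation invariant, so $\int\Phi_{|v_y|}|v_y|^2dx=\int\Phi_{u_\infty}u_\infty^2dx=\|u_{\infty}\|^2$, using that $u_{\infty}\in\mathcal N_{\infty}$. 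Also $I_\infty(u_\infty)=\frac14\|u_\infty\|^2=c_\infty$ by Lemma \ref{lem:1}. It therefore suffices to show
\begin{align*}
\|v_y\|_A^2\to\|u_{\infty}\|^2\quad\text{as }|y|\to\infty,
\end{align*}
since then $c\le \frac14\|u_\infty\|^2+o(1)=c_\infty+o(1)$.

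Since $u_{\infty}$ is real, expanding $|{-i\nabla v_y}+Av_y|^2=|\nabla v_y|^2+|A|^2v_y^2$ gives
\begin{align*}
\|v_y\|_A^2=\|u_{\infty}\|^2+\int_{\R^3}\Big(|A(x+y)|^2+V(x+y)-V_{\infty}\Big)u_{\infty}^2(x)\,dx .
\end{align*}
The cross term $2\mathrm{Re}(i\nabla v_y\cdot A v_y)$ vanishes pointwise because $\nabla v_y\cdot A\,v_y$ is real.

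To show the remaining integral tends to $0$, note that by $(A)$ and $(V)$ the function $W:=|A|^2+V-V_{\infty}$ is bounded and $W(x+y)\to0$ for each fixed $x$ as $|y|\to\infty$. Since $u_{\infty}^2\in L^1$, dominated convergence gives the claim. Alternatively, split the integral into $B_R$, where $W(\cdot+y)$ is uniformly small once $|y|$ is large, and $\R^3\setminus B_R$, where $\int u_\infty^2$ is small.

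Letting $|y|\to\infty$ yields $c\le c_{\infty}$. There is no real obstacle here. The only point needing care is the vanishing of the magnetic cross term, which relies on $u_{\infty}$ being real-valued. For a general complex test function one would instead bound that term by $\|A(\cdot+y)u_\infty\|_{L^2}\|\nabla u_\infty\|_{L^2}\to0$.
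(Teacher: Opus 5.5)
Your proposal is correct and follows essentially the same route as the paper: translate $u_{\infty}$ to infinity, project onto $\mathcal{N}$ via Lemma \ref{lm2.1}, and show $\int_{\R^3}(|A(x+y)|^2+V(x+y)-V_{\infty})u_{\infty}^2\,dx\to0$ (by splitting into $B_R$ and its complement, or by dominated convergence), so that $I(t_yv_y)\to\frac14\|u_{\infty}\|^2=c_{\infty}$. Your explicit check that the magnetic cross term vanishes because $u_{\infty}$ is real-valued is a useful detail. The paper uses this fact without comment when it writes $\|u_n\|_A^2=\int_{\R^3}[|\nabla u_n|^2+(|A|^2+V)|u_n|^2]\,dx$.
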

\noindent{\bf Proof.} 
By Lemma \ref{lem:1}, $u_{\infty}$ is a ground state solution of equation \eqref{1.8.2}. %Obviously, since $(A)$ and $(V)$ hold, we know $u_{\infty}\in H_A^1(\R^3,\mathbb{C})$.  
Define 
$u_n(x):=u_{\infty}(x-y_n)$ with $|y_n|\to +\infty$, then there exists $t_n>0$ such that $t_nu_n\in \mathcal{N}$ for each $n\in \N$.
Moreover, %by \eqref{N1}, 
we know 
\begin{align*}
    t_n^2=\frac{\int_{\R^3}[|\nabla u_n|^2+(|A(x)|^2+V(x))|u_n|^2]dx}{\int_{\R^3}\Phi_{|u_n|} |u_n|^2dx}
    =\frac{\int_{\R^3}[|\nabla u_{\infty}|^2+(|A(x+y_n)|^2+V(x+y_n))u_{\infty}^2]dx}{\int_{\R^3}\Phi_{|u_{\infty}|} u_{\infty}^2dx}.
\end{align*}
For $R>0$, one has
\begin{align*}
    &\left|\int_{\R^3} [|A(x+y_n)|^2+V(x+y_n)-V_{\infty}]u_{\infty}^2dx\right|\\
    \leq &\left|\int_{\R^3\setminus{B_R}} [|A(x+y_n)|^2+V(x+y_n)-V_{\infty}]u_{\infty}^2dx\right|+\left|\int_{B_R} [|A(x+y_n)|^2+V(x+y_n)-V_{\infty}]u_{\infty}^2dx\right|\\
    \leq &C\int_{\R^3\setminus{B_R}}u_{\infty}^2dx+C\sup_{x\in B_R}\left||A(x+y_n)|^2+V(x+y_n)-V_{\infty}\right|.
\end{align*}
Then, according to $(A)$ and $(V)$, we obtain 
\begin{align*}
    \int_{\R^3} [|A(x+y_n)|^2+V(x+y_n)]u_{\infty}^2(x)dx
    =\int_{\R^3} V_{\infty}u_{\infty}^2dx+o_n(1).
\end{align*}
%for some $R>0$ big enough, one has
%\begin{align*}
%    &\int_{\R^3} [|A(x+y_n)|^2+V(x+y_n)]u_{\infty}^2(x)dx=\int_{B_R} (|A(x+y_n)|^2+V(x+y_n))u_{\infty}^2(x)dx\\
%    &+\int_{\R^3\setminus{B_R}} (|A(x+y_n)|^2+V(x+y_n))u_{\infty}^2(x)dx=\int_{\R^3} V_{\infty}u_{\infty}^2(x)dx+o(1).
%\end{align*}
Furthermore, 
\begin{align*}
    t_n^2=\frac{\int_{\R^3}[|\nabla u_{\infty}|^2+V_{\infty}u_{\infty}^2]dx+o_n(1)}{\int_{\R^3}\Phi_{|u_{\infty}|} u_{\infty}^2dx}=1+o_n(1). 
\end{align*}
So, 
\begin{align*}
    c\le I(t_nu_n)=\frac{t_n^2}{4}\|u_n\|_A^2= \frac{1+o_n(1)}{4}\left(\int_{\R^3}[\nabla u_{\infty}|^2+V_{\infty}u_{\infty}^2]dx+o_n(1)\right),
\end{align*}
which implies %$c\le \frac{1}{4}\|u_{\infty}\|^2=c_{\infty}$.
\begin{align}\label{3.3}
    c\le \lim_{n\to +\infty}I(t_nu_n)=\frac{1}{4}\|u_{\infty}\|^2=c_{\infty}.
\end{align}
\qed

%\begin{rem}\label{re2.1}
  %      If $v\in H^1(\mathbb{R}^3)$ satisfies $J_{\infty}(v):=\langle I'_0(v),v\rangle<0$, similar to Lemma \ref{lm2.1}, we can deduce that there exists a unique $t\in (0,1)$ such that $tv\in \mathcal{N}_{\infty}$.
%\end{rem}

Now, we give Br\'ezis-Lieb type splitting properties of the functional $\int_{\mathbb{R}^3}\Phi_{|u|} |u|^2 dx$ in $H^1(\mathbb{R}^3,\mathbb{C})$, which are vital to establishing the global compactness lemma. 
\begin{lem}%\cite[Lemma 2.2]{SM}
\label{lm2.3.0}
        Assume that $\{u_n\}$ is bounded in $H^1(\mathbb{R}^3,\mathbb{C})$, and there exists $u\in H^1(\mathbb{R}^3,\mathbb{C})$ such that $u_n \to u$ almost everywhere on $\R^3$, then as $n\to \infty$,
\begin{align}\label{A1}
        &\int_{\mathbb{R}^3}\Phi_{|u_n-u|} |u_n-u|^2 dx=\int_{\mathbb{R}^3}\Phi_{|u_n|} |u_n|^2 dx-\int_{\mathbb{R}^3}\Phi_{|u|} |u|^2 dx+o_n(1);\\
        \label{A2}        
&\mathrm{Re}\left(\int_{\mathbb{R}^3}\left(\Phi_{|u_n|} u_n-\Phi_{|u|} u-\Phi_{|u_n-u|} (u_n-u)\right)\overline{\varphi} dx\right)=o_n(1)\|\varphi\|, \ \text{ for all } \ \varphi \in H^1(\mathbb{R}^3,\mathbb{C})).
\end{align}
\end{lem}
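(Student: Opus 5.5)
Both identities come from expanding the bilinear Coulomb form. Write $\mathcal{D}(f,g):=\frac{1}{4\pi}\int\!\int \frac{f(x)g(y)}{|x-y|}dxdy$, so that $\int\Phi_{|v|}|v|^2dx=\mathcal{D}(|v|^2,|v|^2)$. Set $v_n:=u_n-u$. Then $v_n$ is bounded in $H^1$ and $v_n\to0$ a.e. Moreover,
\[
|u_n|^2=|v_n|^2+|u|^2+2\mathrm{Re}(v_n\bar u).
\]
Since $\{u_n\}$ is bounded, $v_n\rightharpoonup0$ in $H^1(\R^3,\C)$ (a.e. convergence identifies the weak limit). Hence $v_n\to0$ in $L^p_{loc}$ for $p\in[2,6)$ and $v_n\rightharpoonup0$ in $L^{12/5}(\R^3)$. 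The two facts we will use repeatedly are:

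(i) $\mathrm{Re}(v_n\bar u)\rightharpoonup0$ in $L^{6/5}(\R^3)$, since $v_n\rightharpoonup0$ in $L^{12/5}$ and $u\in L^{12/5}$ gives a fixed multiplier;

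(ii) $|v_n|^2$ is bounded in $L^{6/5}$, so by Lemma \ref{lm2.0.2} the potentials $\Phi_{|v_n|}$ are bounded in $L^6$.

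\textbf{Proof of \eqref{A1}.} Expand $\mathcal{D}(|u_n|^2,|u_n|^2)$ using the decomposition above. This gives the two desired terms $\mathcal{D}(|v_n|^2,|v_n|^2)$ and $\mathcal{D}(|u|^2,|u|^2)$, plus four cross terms. We show each cross term is $o_n(1)$.

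The term $\mathcal{D}(|u|^2,\mathrm{Re}(v_n\bar u))=\int\Phi_{|u|}\mathrm{Re}(v_n\bar u)dx$ tends to $0$. Indeed $\Phi_{|u|}\in L^6$ is fixed, and pairing it against a sequence converging weakly to $0$ in $L^{6/5}$ gives zero in the limit.

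The term $\mathcal{D}(\mathrm{Re}(v_n\bar u),\mathrm{Re}(v_n\bar u))\to0$ by localizing. Cut $u=u\chi_{B_R}+u\chi_{B_R^c}$. The tail part is uniformly small by HLS, because $\|u\|_{L^{12/5}(B_R^c)}$ is small and $v_n$ is bounded. The local part tends to $0$ strongly in $L^{6/5}$, because $v_n\to0$ in $L^{12/5}(B_R)$.

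The terms $\mathcal{D}(|v_n|^2,|u|^2)$ and $\mathcal{D}(|v_n|^2,\mathrm{Re}(v_n\bar u))$ are handled the same way. Write them as $\int\Phi_{|u|}|v_n|^2dx$ and $\int\Phi_{|v_n|}\mathrm{Re}(v_n\bar u)dx$, then split into $B_R$ and $B_R^c$. On $B_R^c$, smallness comes from $\|\Phi_{|u|}\|_{L^6(B_R^c)}$ (respectively $\|u\|_{L^{12/5}(B_R^c)}$) together with the uniform bounds in (ii). On $B_R$, smallness comes from local strong convergence of $v_n$.

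\textbf{Proof of \eqref{A2}.} Write
\[
\Phi_{|u_n|}u_n-\Phi_{|u|}u-\Phi_{|v_n|}v_n=(\Phi_{|u_n|}-\Phi_{|v_n|})v_n+(\Phi_{|u_n|}-\Phi_{|u|})u,
\]
and note $\Phi_{|u_n|}=\Phi_{|v_n|}+\Phi_{|u|}+2\Phi[\mathrm{Re}(v_n\bar u)]$, where $\Phi[f]:=\frac1{4\pi}\int\frac{f(y)}{|x-y|}dy$. This leaves four kinds of terms, each to be bounded by $o_n(1)\|\varphi\|$ uniformly in $\varphi$:
\[
\Phi_{|u|}v_n,\qquad \Phi[\mathrm{Re}(v_n\bar u)]\,v_n,\qquad \Phi_{|v_n|}u,\qquad \Phi[\mathrm{Re}(v_n\bar u)]\,u.
\]

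The key step is showing $\|\Phi[\mathrm{Re}(v_n\bar u)]\|_{L^6}\to0$ strongly, not just weakly. By HLS this is the same as $\|v_n\bar u\|_{L^{6/5}}\to0$. That holds by the same tail/local split used for \eqref{A1}: $\|v_n\bar u\|_{L^{6/5}}\le\|v_n\|_{L^{12/5}}\|u\|_{L^{12/5}(B_R^c)}+\|v_n\|_{L^{12/5}(B_R)}\|u\|_{L^{12/5}}$. With this in hand, the second and fourth terms are handled by Hölder in $L^6\cdot L^{12/5}\cdot L^{12/5}$ together with $\|\varphi\|_{L^{12/5}}\le C\|\varphi\|$.

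The main obstacle is the remaining two terms, $\int\Phi_{|u|}v_n\bar\varphi\,dx$ and $\int\Phi_{|v_n|}u\bar\varphi\,dx$. Weak convergence only gives pointwise-in-$\varphi$ decay, and we need decay uniform in $\varphi$.

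For the first, bound it by $\|\Phi_{|u|}v_n\|_{L^{12/7}}\|\varphi\|_{L^{12/5}}$, and show $\|\Phi_{|u|}v_n\|_{L^{12/7}}\to0$ by splitting into $B_R$ and $B_R^c$. On $B_R^c$ use $\|\Phi_{|u|}\|_{L^6(B_R^c)}$ small with $v_n$ bounded in $L^{12/5}$; on $B_R$ use local convergence of $v_n$.

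For the second, bound it by $\|\Phi_{|v_n|}u\|_{L^{12/7}}\|\varphi\|_{L^{12/5}}$. On $B_R^c$, smallness comes from $\|u\|_{L^{12/5}(B_R^c)}$ with $\Phi_{|v_n|}$ bounded in $L^6$. On $B_R$, we need $\Phi_{|v_n|}\to0$ in $L^6_{loc}$ (or at least in $L^q(B_R)$ for some $q$ compatible with $u|_{B_R}$). This should follow because $|v_n|^2\rightharpoonup0$ in $L^{6/5}$, so $\Phi_{|v_n|}\rightharpoonup0$ in $D^{1,2}$, and the Rellich theorem gives $\Phi_{|v_n|}\to0$ in $L^q_{loc}$ for $q<6$. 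Choosing $q\in(4,6)$ close to $6$ and applying Hölder with $u\in L^{s}(B_R)$, where $\frac1s=\frac7{12}-\frac1q<\frac13$ so that $s<6$, closes the estimate.

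Combining the estimates for all four terms yields \eqref{A2}.
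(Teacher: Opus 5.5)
Your proof is correct. For \eqref{A1} it is essentially the paper's argument. The paper writes the difference as $\mathcal D(e_n,e_n)+2\mathcal D(e_n,|u_n-u|^2)$ with $e_n=|u_n|^2-|u_n-u|^2$, and uses the Br\'ezis--Lieb lemma to get $e_n\to|u|^2$ in $L^{6/5}$. Since $e_n-|u|^2=2\mathrm{Re}(v_n\bar u)$, this is exactly your statement $\|v_n\bar u\|_{L^{6/5}}\to 0$, which you prove directly by the tail/local split.

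For \eqref{A2} your route genuinely differs. The paper expands $|u_n-u|^2$ in terms of $u_n$ and $u$, which produces four double integrals $I_n^{(j)}$ to be compared with $I_0$. It handles the nonlocal pieces with Minkowski's integral inequality, reducing them to auxiliary functions $z_n^{2/3}$ (pointwise null and bounded in $L^6$, hence weakly null) and $z_n^*$ (bounded in $L^6$). Three of its four terms are only said to be ``similar''.

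You instead expand around $v_n$, which isolates the single strongly null quantity $v_n\bar u$. You then reduce every term to the vanishing of a $\varphi$-independent norm, namely $\|\Phi_{|u|}v_n\|_{L^{12/7}}$, $\|\Phi[\mathrm{Re}(v_n\bar u)]\|_{L^6}$ and $\|\Phi_{|v_n|}u\|_{L^{12/7}}$, so uniformity in $\varphi$ is automatic. The one delicate term, $\Phi_{|v_n|}u$, you handle by weak continuity of $f\mapsto\Phi[f]$ from $L^{6/5}$ into $D^{1,2}$ plus Rellich compactness on balls, in place of the Minkowski/$z_n$ machinery. Your version is shorter and treats every term explicitly. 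The paper's version stays at the level of elementary integral inequalities and never needs compactness of $\Phi_{|v_n|}$.

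One harmless slip: for $q\in(4,6)$ one has $\frac1s=\frac7{12}-\frac1q\in(\frac13,\frac5{12})$, not $\frac1s<\frac13$. So $s\in(\frac{12}{5},3)$. All you need is $s\le 6$, so that $u\in L^s(B_R)$, and that holds.
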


In order to prove Lemma \ref{lm2.3.0}, we introduce an easy variant of the classical Br\'ezis-Lieb lemma \cite[Theorem 1]{BL2}. 
\begin{lem}
\label{BL}%\cite[Theorem 2]{B} and \cite[Theorem 4.2.7]{W2}
Let $\Omega \subseteq \mathbb{R}^3$ be a domain, $r \in [1, \infty)$ 
and $\{u_n\}$ be a bounded sequence in $L^r(\Omega,\C)$. %for some $r \in [q, \infty)$. 
If $u_n \to u$ almost everywhere on $\Omega$ as $n \to \infty$, then for every $q \in [1, r]$,
\[
\lim_{n \to \infty}
\int_{\Omega}
\left|
|u_n|^q - |u_n - u|^q - |u|^q
\right|^{r/q} dx
= 0.
\]
\end{lem}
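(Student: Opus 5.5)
The statement to prove is Lemma \ref{BL}, the Br\'ezis--Lieb variant. The plan is to reduce it to the classical Br\'ezis--Lieb argument based on an elementary $\varepsilon$-inequality, applied to $|\cdot|^q$ on $\C$, followed by dominated convergence on a truncated integrand.

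\textbf{Step 1: the elementary inequality.} Fix $q\in[1,r]$ and set $p:=r/q\ge 1$. Since $z\mapsto |z|^q$ is $C^1$ on $\C\cong\R^2$ (for $q>1$) or Lipschitz (for $q=1$), for every $\varepsilon>0$ there is $C_\varepsilon>0$ with
\begin{align*}
\big||a+b|^q-|a|^q\big|\le \varepsilon|a|^q+C_\varepsilon|b|^q,\qquad a,b\in\C.
\end{align*}
For $q>1$ this comes from the mean value theorem, $\big||a+b|^q-|a|^q\big|\le q(|a|+|b|)^{q-1}|b|$, combined with Young's inequality. For $q=1$ it is the triangle inequality with $\varepsilon=0$. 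We apply it with $a=u_n-u$ and $b=u$, which gives
\begin{align*}
\big||u_n|^q-|u_n-u|^q-|u|^q\big|\le \varepsilon|u_n-u|^q+(C_\varepsilon+1)|u|^q .
\end{align*}

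\textbf{Step 2: truncation and dominated convergence.} Define
\begin{align*}
f_n:=\big||u_n|^q-|u_n-u|^q-|u|^q\big|, \qquad g_{n,\varepsilon}:=\big(f_n-\varepsilon|u_n-u|^q\big)^+ .
\end{align*}
By Step 1, $0\le g_{n,\varepsilon}\le (C_\varepsilon+1)|u|^q$. Because $u_n\to u$ a.e., we also have $f_n\to0$ a.e., and therefore $g_{n,\varepsilon}\to0$ a.e.

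We need $|u|^q\in L^{p}(\Omega)$, that is, $u\in L^r(\Omega,\C)$. This follows from Fatou's lemma:
\begin{align*}
\|u\|_{L^r}\le\liminf_n\|u_n\|_{L^r}\le M:=\sup_n\|u_n\|_{L^r}.
\end{align*}
Dominated convergence then yields $\int_\Omega g_{n,\varepsilon}^{p}\,dx\to0$.

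\textbf{Step 3: conclusion.} From $f_n\le g_{n,\varepsilon}+\varepsilon|u_n-u|^q$ and the convexity inequality $(s+t)^p\le 2^{p-1}(s^p+t^p)$,
\begin{align*}
\int_\Omega f_n^{p}\,dx\le 2^{p-1}\int_\Omega g_{n,\varepsilon}^{p}\,dx+2^{p-1}\varepsilon^{p}\int_\Omega|u_n-u|^{r}\,dx\le o_n(1)+2^{p-1}\varepsilon^{p}(2M)^r .
\end{align*}
Hence $\limsup_n\int_\Omega f_n^{r/q}\,dx\le 2^{p-1}(2M)^r\varepsilon^{p}$. Letting $\varepsilon\to0$ gives the claim.

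\textbf{Main obstacle.} The only point needing care is Step 1 for complex arguments with $q$ near $1$, together with the fact that the exponent $r/q$ may exceed $1$, so the estimate must be carried at the level of $f_n^{r/q}$ rather than $f_n$. The truncation $g_{n,\varepsilon}$ combined with the convexity bound handles this; the remaining steps are routine.
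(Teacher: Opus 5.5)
Your proof is correct. The paper gives no proof of its own and only cites the classical Br\'ezis--Lieb theorem as an ``easy variant''; your argument is exactly the standard Br\'ezis--Lieb argument, adapted to the exponent $r/q$: the $\varepsilon$-inequality $\big||a+b|^q-|a|^q\big|\le\varepsilon|a|^q+C_\varepsilon|b|^q$, then the truncation $(f_n-\varepsilon|u_n-u|^q)^+$ dominated by $(C_\varepsilon+1)|u|^q\in L^{r/q}$, then the convexity bound to pass to the power $r/q$.
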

% Also recall that pointwise convergence of a bounded sequence implies weak convergence (see \cite[Proposition 5.4.7]{W2}).
% \begin{lem}
% \label{weak}
% Let $\Omega \subseteq \mathbb{R}^3$ be a domain, $q \in (1, \infty)$, and let $(u_n)_{n \in \mathbb{N}}$ be a bounded sequence in $L^q(\Omega)$. If $u_n \to u$ almost everywhere on $\Omega$ as $n \to \infty$, then
% \[
% u_n \rightharpoonup u \quad \text{weakly in } L^q(\Omega) \ \ \text{as} \ \ n \to \infty.
% \]
% \end{lem}
{\noindent{\bf Proof of Lemma \ref{lm2.3.0}.}}
The proof is inspired by \cite{MV,ZZ}.
We first prove \eqref{A1} for completeness. Define $G(x,y):=\frac{1}{4\pi|x-y|}$. Note that 
% \begin{align}
%     & \int_{\mathbb{R}^3}\Phi_{|u_n|} |u_n|^2 dx-\int_{\mathbb{R}^3}\Phi_{|u_n-u|} |u_n-u|^2dx\nonumber\\
%     =&\int_{\R^3}(I_2*|u_n|^2)|u_n|^2dx-\int_{\R^3}(I_2*|u_n-u|^2)|u_n-u|^2dx\nonumber\\
% \end{align}
\begin{align}\label{31}
   & \int_{\mathbb{R}^3}\Phi_{|u_n|} |u_n|^2 dx-\int_{\mathbb{R}^3}\Phi_{|u_n-u|} |u_n-u|^2dx\nonumber\\
   =&\int_{\R^3}\int_{\R^3}G(x,y)|u_n(y)|^2|u_n(x)|^2dxdy-\int_{\R^3}\int_{\R^3}G(x,y)|u_n(y)-u(y)|^2|u_n(x)-u(x)|^2dxdy\nonumber\\
  % =&\int_{\R^3}\int_{\R^3}G(x,y)\left(|u_n(y)|^2-|u_n(y)-u(y)|^2\right)\left(|u_n(x)|^2-|u_n(x)-u(x)|^2\right)dxdy\nonumber\\
  % &+2\int_{\R^3}\int_{\R^3}G(x,y)\left(|u_n(y)|^2-|u_n(y)-u(y)|^2\right)|u_n(x)-u(x)|^2dxdy\nonumber\\
=&\int_{\R^3}\int_{\R^3}G(x,y)e_n(y)e_n(x)dxdy+2\int_{\R^3}\int_{\R^3}G(x,y)e_n(y)|u_n(x)-u(x)|^2dxdy,
\end{align}
where $e_n(y):=|u_n(y)|^2-|u_n(y)-u(y)|^2$. 
Since $\{u_n\}$ is bounded in $H^1(\mathbb{R}^3,\mathbb{C})$ and $u_n \to u$ almost everywhere on $\R^3$, using Lemma \ref{BL} with $q=2$ and $r=\frac{12}{5}$, we have $e_n-|u|^2\to 0$ in $L^{\frac{6}{5}}(\R^3,\C)$. 
Then, by the Hardy-Littlewood-Sobolev inequality \eqref{HLS2} and the H{\"o}lder inequality, it holds 
%\begin{align}
    %&\int_{\R^3}\int_{\R^3}G(x,y)\left(|u_n(y)|^2-|u_n(y)-u(y)|^2\right)\left(|u_n(x)|^2-|u_n(x)-u(x)|^2\right)dxdy-\int_{\R^3}\int_{\R^3}G(x,y)|u(y)|^2|u(x)|^2dxdy\nonumber\\
    % =&\int_{\R^3}\int_{\R^3}G(x,y)\left(|u_n(y)|^2-|u_n(y)-u(y)|^2-|u(y)|^2\right)\left(|u_n(x)|^2-|u_n(x)-u(x)|^2\right)dxdy\nonumber\\
    % &+\int_{\R^3}\int_{\R^3}G(x,y)|u(y)|^2\left(|u_n(x)|^2-|u_n(x)-u(x)|^2-|u(x)|^2\right)dxdy\nonumber\\
    % \le&\left(\int_{\R^3}\left(\int_{\R^3}G(x,y)\left(|u_n(y)|^2-|u_n(y)-u(y)|^2-|u(y)|^2\right)dy\right)^{6}dx\right)^{\frac{1}{6}}\|\left(|u_n|^2-|u_n-u|^2\right)\|_{L^{\frac{6}{5}}(\R^3)}\nonumber\\
    % &+\left(\int_{\R^3}\left(\int_{\R^3}G(x,y)|u(y)|^2dy\right)^{6}dx\right)^{\frac{1}{6}}\|\left(|u_n|^2-|u_n-u|^2-|u|^2\right)\|_{L^{\frac{6}{5}}(\R^3)}\nonumber\\
    % \le&C\|\left(|u_n|^2-|u_n-u|^2-|u|^2\right)\|_{L^{\frac{6}{5}}(\R^3)}+o_n(1)=o_n(1).
%\end{align}
\begin{align}\label{32}
    &\left|\int_{\R^3}\int_{\R^3}G(x,y)e_n(x)e_n(y)dxdy-\int_{\R^3}\int_{\R^3}G(x,y)|u(y)|^2|u(x)|^2dxdy\right|\nonumber\\
    \le &\int_{\R^3}\int_{\R^3}G(x,y)|e_n(x)|\cdot \left|e_n(y)-|u(y)|^2\right|dxdy+\int_{\R^3}\int_{\R^3}G(x,y)|u(y)|^2\left|e_n(x)-|u(x)|^2\right|dxdy\nonumber\\
     %\le&\left(\int_{\R^3}\left(\int_{\R^3}G(x,y)\left(e_n(y)-|u(y)|^2\right)dy\right)^{6}dx\right)^{\frac{1}{6}}\|e_n\|_{L^{\frac{6}{5}}(\R^3)}\nonumber\\
%&+\left(\int_{\R^3}\left(\int_{\R^3}G(x,y)|u(y)|^2dy\right)^{6}dx\right)^{\frac{1}{6}}\|(e_n-|u|^2)\|_{L^{\frac{6}{5}}(\R^3)}\nonumber\\
      \le&C\left\|e_n-|u|^2\right\|_{L^{\frac{6}{5}}(\R^3)}\|e_n\|_{L^{\frac{6}{5}}(\R^3)}+C\left\|e_n-|u|^2\right\|_{L^{\frac{6}{5}}(\R^3)}\|u\|_{L^{\frac{12}{5}}(\R^3)}^2=o_n(1).
\end{align}
% Introduce the Hardy-Littlewood-Sobolev inequality \eqref{HLS2} \cite[Thorem $A^*$]{SW} that if $u\in L^p(\R^N)$ for $1<p<q<\infty$, and $\frac{1}{q}=\frac{1}{p}+\frac{\lambda}{N}-1$ with $\lambda\in(0,N)$, then there exists $C_{p,q}$ is indepedent of $u$, but may depend on $p$ and $q$ such that 
% \[
% \left\|\int_{\R^N}\frac{u(y)}{|x-y|^{\lambda}}dy\right\|_{L^q(\R^N)}\le C_{p,q}\|u\|_{L^p(\R^N)}.
% \]
Moreover, it follows from %$e_n\to |u|^2$ in $L^{\frac{6}{5}}(\R^3)$ and
the Hardy-Littlewood-Sobolev inequality
\eqref{HLS1} that
\begin{align}\label{3.13}
\left\|\int_{\R^3}G(x,y)\left(e_n(y)-|u(y)|^2\right)dy\right\|_{L^6(\R^3)}
\leq C\left\|e_n-|u|^2\right\|_{L^{\frac{6}{5}}(\R^3)}
\to 0.
    %\int_{\R^3}G(x,y)e_n(y)dy\to \int_{\R^3}G(x,y)|u(y)|^2dy\ \text{in}\ \ L^6(\R^3).
\end{align}
According to the H{\"o}lder inequality and \eqref{3.13}, for $R>0$ large enough, we have
\begin{align}
    &\left|\int_{\R^3}\int_{\R^3}G(x,y)e_n(y)|u_n(x)-u(x)|^2dxdy\right|\nonumber\\
    \leq &\left|\int_{\R^3}\int_{\R^3}G(x,y)\left(e_n(y)-|u(y)|^2\right)|u_n(x)-u(x)|^2dxdy\right|
    +\left|\int_{\R^3}\int_{\R^3}G(x,y)|u(y)|^2|u_n(x)-u(x)|^2dxdy\right|\nonumber\\
    \leq &\left\|\int_{\R^3}G(x,y)\left(e_n(y)-|u(y)|^2\right)dy\right\|_{L^6(\R^3)}\|u_n-u\|_{L^{\frac{12}{5}}(\R^3)}^2
+\left|\int_{B_R}\left(\int_{\R^3}G(x,y)|u(y)|^2dy\right)|u_n(x)-u(x)|^2dx\right|\nonumber\\
&+\left|\int_{\R^3\setminus{B_R}}\left(\int_{\R^3}G(x,y)|u(y)|^2dy\right)|u_n(x)-u(x)|^2dx\right|\nonumber\\
\leq &o_n(1)+\left\|\int_{\R^3}G(x,y)|u(y)|^2dy\right\|_{L^6(\R^3)}\|u_n-u\|_{L^{\frac{12}{5}}(B_R)}^2
+\left\|\int_{\R^3}G(x,y)|u(y)|^2dy\right\|_{L^6(\R^3\setminus{B_R})}\|u_n-u\|_{L^{\frac{12}{5}}(\R^3)}^2\nonumber\\
\leq &o_n(1)+C\|u_n-u\|_{L^{\frac{12}{5}}(B_R)}^2+C\left\|\int_{\R^3}G(x,y)|u(y)|^2dy\right\|_{L^6(\R^3\setminus{B_R})},\nonumber
\end{align}
which, together with $u_n\to u$ strongly in $L_{loc}^{\frac{12}{5}}(\R^3,\mathbb{C})$, gives
\begin{align}\label{33}
    \left|\int_{\R^3}\int_{\R^3}G(x,y)e_n(y)|u_n(x)-u(x)|^2dxdy\right|=o_n(1).
\end{align}
% Moreover, combining $u_n\to u$ strongly in $L_{loc}^{\frac{12}{5}}(\R^3)$ with the Hardy-Littlewood-Sobolev inequality \eqref{HLS2} , for $R\in \R$ large enough, one has
% \begin{align}\label{33}
%    & \left|\int_{\R^3}\int_{\R^3}G(x,y)e_n(y)|u_n(x)-u(x)|^2dxdy\right|\nonumber\\
%    \le &\int_{B_R}\left(\int_{\R^3}G(x,y)|e_n(y)|dy\right)|u_n(x)-u(x)|^2dx+\int_{\R^3\setminus{B_R}}\left(\int_{\R^3}G(x,y)|e_n(y)|dy\right)|u_n(x)-u(x)|^2dx\nonumber\\
%    \le&C\left(\int_{B_R}\left(\int_{\R^3}G(x,y)|e_n(y)|dy\right)^{6}dx\right)^{\frac{1}{6}}\|u_n-u\|_{L^{\frac{12}{5}}(B_R)}^2\nonumber\\
%    &+C\|u_n-u\|_{L^{\frac{12}{5}}(\R^3)}^2\left(\int_{\R^3\setminus{B_R}}\left(\int_{\R^3}G(x,y)|e_n(y)|dy\right)^6dx\right)^{\frac{1}{6}}\nonumber\\
%    \le&C\|e_n\|_{L^{\frac{6}{5}}(\R^3)}\|u_n-u\|_{L^{\frac{12}{5}}(B_R)}^2+o_n(1)=o_n(1).
% \end{align}
%where we use
Hence, combining \eqref{31}, \eqref{32} and \eqref{33}, we derive \eqref{A1}.
%then $|u_n|$ is bounded in $L^q(\R^3)$ for $q\in[2,6]$ and $|u_n| \to |u|$ almost everywhere on $\R^3$. 

Next, we are going to prove \eqref{A2}.
    % The proof is similar to \cite[Lemma 2.2]{ZZ}, and \eqref{A1} has been proven in \cite[Lemma 2.2]{CW}, we only prove \eqref{A2}.  
    For each $\varphi \in H^1(\mathbb{R}^3,\mathbb{C}))$,
    we denote
    \[
    I_n:=\mathrm{Re}\left(\int_{\R^3}\Phi_{|u_n-u|}(u_n-u)\overline{\varphi}dx\right)=\mathrm{Re}\left(\int_{\R^3}\int_{\R^3}|u_n(y)-u(y)|^2(u_n(x)-u(x))\overline{\varphi}(x)G(x,y)dxdy\right),
    \]
    and $$I_0:=\mathrm{Re}\left(\int_{\R^3}\Phi_{|u|} u \overline{\varphi}dx \right)=\mathrm{Re}\left(\int_{\R^3}\int_{\R^3}|u(y)|^2u(x)\overline{\varphi}(x)G(x,y)dxdy\right).$$ %where $G(x,y)=\frac{1}{4\pi|x-y|}$.
   Notice that
    \[
    |u_n(y)-u(y)|^2=|u_n(y)|^2+|u(y)|^2-2\mathrm{Re}\left(u_n(y)\overline{u}(y)\right).
    \]
    Then,
\begin{align}       \label{A0}
I_n=&\mathrm{Re}\left(\int_{\R^3}\int_{\R^3}|u_n(y)|^2u_n(x)\overline{\varphi}(x)G(x,y)dxdy-\int_{\R^3}\int_{\R^3}|u(y)|^2u(x)\overline{\varphi}(x)G(x,y)dxdy\right)\nonumber\\
&+\mathrm{Re}\left(\int_{\R^3}\int_{\R^3}|u(y)|^2u_n(x)\overline{\varphi}(x)G(x,y)dxdy-\int_{\R^3}\int_{\R^3}|u_n(y)|^2u(x)\overline{\varphi}(x)G(x,y)dxdy\right)\nonumber\\
&+2\mathrm{Re}\left(\int_{\R^3}\int_{\R^3}\mathrm{Re}\left(u_n(y)\overline{u}(y)\right)u(x)\overline{\varphi}(x)G(x,y)dxdy\right)\nonumber\\
&-2\mathrm{Re}\left(\int_{\R^3}\int_{\R^3}\mathrm{Re}\left(u_n(y)\overline{u}(y)\right)u_n(x)\overline{\varphi}(x)G(x,y)dxdy\right)\nonumber\\
    =&\mathrm{Re}\left(\int_{\R^3}\Phi_{|u_n|} u_n\overline{\varphi}dx\right)-\mathrm{Re}\left(\int_{\R^3}\Phi_{|u|} u\overline{\varphi}dx\right)+I_n^{(1)}-I_n^{(2)}+2I_n^{(3)}-2I_n^{(4)},
    \end{align}
    where 
    \begin{align*}
        &I_n^{(1)}:=\mathrm{Re}\left(\int_{\R^3}\int_{\R^3}|u(y)|^2u_n(x)\overline{\varphi}(x)G(x,y)dxdy\right), \\ &I_n^{(2)}:=\mathrm{Re}\left(\int_{\R^3}\int_{\R^3}|u_n(y)|^2u(x)\overline{\varphi}(x)G(x,y)dxdy\right) , \\
    &I_n^{(3)}:=\mathrm{Re}\left(\int_{\R^3}\int_{\R^3}\mathrm{Re}\left(u_n(y)\overline{u}(y)\right)u(x)\overline{\varphi}(x)G(x,y)dxdy\right),\\ & I_n^{(4)}:=\mathrm{Re}\left(\int_{\R^3}\int_{\R^3}\mathrm{Re}\left(u_n(y)\overline{u}(y)\right)u_n(x)\overline{\varphi}(x)G(x,y)dxdy\right).
    \end{align*}
   Note that
      \begin{align}  \label{A5}
     & \left|I_n^{(4)}-I_0\right|=\left|\mathrm{Re}\left(\int_{\R^3}\int_{\R^3} \left[\mathrm{Re}\left(u_n(y)\overline{u}(y)\right)u_n(x)-|u(y)|^2u(x)\right]\overline{\varphi}(x)G(x,y)dxdy\right)\right|\nonumber\\
     \le &\left|\mathrm{Re}\left(\int_{\R^3}\int_{\R^3} \left[\mathrm{Re}\left(u_n(y)\overline{u}(y)\right)-|u(y)|^2\right]u_n(x)\overline{\varphi}(x)G(x,y)dxdy\right)\right|\nonumber\\
     &+
     \left|\mathrm{Re}\left(\int_{\R^3}\int_{\R^3} |u(y)|^2\left(u_n(x)-u(x)\right)\overline{\varphi}(x)G(x,y)dxdy\right)\right|\nonumber\\
     %=&o_n(1)\|\varphi\|_A.
     % =&\left|\mathrm{Re}\left(\int_{\R^3}\int_{\R^3} \mathrm{Re}\left[\big(u_n(y)-u(y)\big)\overline{u}(y)\right]u_n(x)\overline{\varphi}(x)G(x,y)dxdy\right)\right|+o_n(1)\|\varphi\|_A\nonumber\\
      \le&\int_{\R^3}\int_{\R^3} |u_n(y)-u(y)|\cdot |u(y)|\cdot |u_n(x)|\cdot |\varphi(x)|G(x,y)dxdy\nonumber\\
      &+\int_{\R^3}\int_{\R^3} |u(y)|^2|u_n(x)-u(x)|\cdot|\varphi(x)|G(x,y)dxdy.%+o_n(1)\|\varphi\|_A=o_n(1)\|\varphi\|_A.
%      =&\left|\mathrm{Re}\left(\int_{\R^3}\int_{\R^3}\mathrm{Re}\big(u_n(y)u_n(x)-u(y)u(x)\big)\overline{u}(y)\cdot\overline{\varphi}(x)G(x,y)dxdxy\right)\right|\nonumber\\
% \le&\int_{\R^3}\int_{\R^3}\big|u_n(y)u_n(x)-u(y)u(x)\big|\cdot |\overline{u}(y)|\cdot|\overline{\varphi}(x)|G(x,y)dxdxy=o_n(1)\|\varphi\|_A.
   \end{align}
%Since $u_n\rightharpoonup u$ in $H_A^1(\mathbb{R}^3,\mathbb{C})$, 
On the one hand, it follows from the H\"older inequality and the Minkowski inequality \cite[Theorem 2.4]{LL} that
    %by the proof of \cite[Lemma 2.2]{ZZ},it holds that 
   \begin{align}\label{A3}
%&\mathrm{Re}\left(\int_{\R^3}\int_{\R^3}|u(y)|^2u_n(x)\overline{\varphi}(x)G(x,y)dxdy\right)-\mathrm{Re}\left(\int_{\R^3}\int_{\R^3}|u(y)|^2u(x)\overline{\varphi}(x)G(x,y)dxdy\right)\\
%&|\widetilde{I_n^{(1)}}-I_0|=\left|\mathrm{Re}\left(\int_{\R^3}\int_{\R^3} \big(u_n(x)-u(x)\big)|u(y)|^2\overline{\varphi}(x)G(x,y)dxdy\right)\right|\nonumber\\
%\le& 
&\int_{\R^3}\int_{\R^3}|u_n(x)-u(x)|\cdot |u(y)|^2\cdot |\varphi(x)|G(x,y)dydx\nonumber\\
\le &\|\varphi \|_{L^3(\R^3)}\left(\int_{\R^3}\left(\int_{\R^3}|u_n(x)-u(x)|\cdot|u(y)|^2G(x,y)dy\right)^{\frac{3}{2}}dx\right)^{\frac{2}{3}}\nonumber\\
\le &C\|\varphi\|\int_{\R^3}\left(\int_{\R^3}\left(|u_n(x)-u(x)|\cdot|u(y)|^2G(x,y)\right)^{\frac{3}{2}}dx\right)^{\frac{2}{3}}dy\nonumber\\
=&C\|\varphi\|\int_{\R^3}\left(\int_{\R^3}|u_n(x)-u(x)|^{\frac{3}{2}}G^{\frac{3}{2}}(x,y)dx\right)^{\frac{2}{3}}|u(y)|^2dy\nonumber\\
=&C\|\varphi\|\int_{\R^3}z_n^{\frac{2}{3}}(y)|u(y)|^2dy,
   \end{align}
where $z_n(y):=\int_{\R^3}|u_n(x)-u(x)|^{\frac{3}{2}}G^{\frac{3}{2}}(x,y)dx$. For fixed $y\in \R^3$ and $R>0$ large enough, we have   
\begin{align}\label{A31}
    z_n(y)=&\int_{B(y,R)}|u_n(x)-u(x)|^{\frac{3}{2}}G^{\frac{3}{2}}(x,y)dx+\int_{\R^3\setminus B(y,R)}|u_n(x)-u(x)|^{\frac{3}{2}}G^{\frac{3}{2}}(x,y)dx\nonumber\\
    \le &\|u_n-u\|_{L^{\frac{9}{2}}(B(y,R))}^{\frac{3}{2}}\|G(x-y)\|_{L^{\frac{9}{4}}(B(y,R))}^{\frac{3}{2}}+\|u_n-u\|_{L^2(\R^3)}^{\frac{3}{2}}\|G(x-y)\|_{L^6(\R^3\setminus B(y,R))}^{\frac{3}{2}}\nonumber\\
    \leq &CR^{\frac{1}{2}}\|u_n-u\|_{L^{\frac{9}{2}}(B(y,R))}^{\frac{3}{2}}+CR^{-\frac{3}{4}}
    =o_n(1).
\end{align}
By the Hardy-Littlewood-Sobolev inequality \eqref{HLS1}, $\|z_n\|_{L^4(\R^3)}\le C\|u_n-u\|_{L^2(\R^3)}^{\frac{3}{2}}\le C$, which implies that $\{z_n^{\frac{2}{3}}\}$is bounded in $L^6(\R^3,\R)$.  Combining  \eqref{A31} and \cite[Proposition 5.4.7]{W2}, we obtain $z_n^{\frac{2}{3}}\rightharpoonup 0$ weakly in $L^6(\R^3,\R)$.
Hence, from \eqref{A3}, we have
\begin{align}\label{A32}
    &\int_{\R^3}\int_{\R^3}|u_n(x)-u(x)|\cdot |u(y)|^2\cdot |\varphi(x)|G(x,y)dxdy\le C\|\varphi\| \int_{\R^3}z_n^{\frac{2}{3}}(y)|u(y)|^2dy=o_n(1)\|\varphi\|.
\end{align}

On the other hand, since $u_n\to u$ strongly in $L^2_{loc}(\R^3,\C)$, for $R>0$ large enough, we deduce from the H\"older inequality and the Minkowski inequality that 
  \begin{align}\label{A61}
      &\int_{\R^3}\int_{\R^3} |u_n(y)-u(y)|\cdot |u(y)|\cdot |u_n(x)|\cdot |\varphi(x)|G(x,y)dydx \nonumber\\
      \le &\|\varphi\|_{L^3(\R^3)}\left(\int_{\R^3} \left( \int_{\R^3}|u_n(y)-u(y)|\cdot |u(y)|\cdot |u_n(x)|\cdot G(x,y)dy\right)^{\frac{3}{2}}dx\right)^{\frac{2}{3}}\nonumber\\
      %\le &C\|\varphi\|_A\int_{\R^3} \left(\int_{\R^3} |u_n(y)-u(y)|^{\frac{3}{2}}\cdot |u(y)|^{\frac{3}{2}}\cdot|u_n(x)|^{\frac{3}{2}} G^{\frac{3}{2}}(x,y)dx\right)^{\frac{2}{3}}dy\nonumber\\
      \le &C\|\varphi\|\int_{\R^3} \left(\int_{\R^3}|u_n(x)|^{\frac{3}{2}} G^{\frac{3}{2}}(x,y)dx\right)^{\frac{2}{3}}|u_n(y)-u(y)|\cdot |u(y)|dy\nonumber\\
      =&C\|\varphi\|\int_{\R^3}z_n^*(y)|u_n(y)-u(y)|\cdot |u(y)|dy\nonumber\\
      \le&C\|\varphi\|\cdot\|z_n^*\|_{L^6(\R^3)}\left(\|u_n-u\|_{L^2(B_R)}\|u\|_{L^3(\R^3)}+\|u_n-u\|_{L^2(\R^3)}\|u\|_{L^3(\R^3\setminus{B_R})}\right) \nonumber\\
      =&o_n(1)\|\varphi\|,
  \end{align}
  where $z_n^*(y)=\left(\int_{\R^3}|u_n(x)|^{\frac{3}{2}} G^{\frac{3}{2}}(x,y)dx\right)^{\frac{2}{3}}$ and $\|z_n^*\|_{L^6(\R^3)}\le C\|u_n\|_{L^2(\R^3)} \le C $.
    Then,  combining \eqref{A5}, \eqref{A32} and \eqref{A61}, we have
 \begin{align}
     \label{A7}
     \left|I_n^{(4)}-I_0\right|= o_n(1)\|\varphi\|.
 \end{align}  
 
 Similar to the proof of \eqref{A7}, we can show that the following equalities hold
 \begin{align}
     \label{A8}
    \left|I_n^{(1)}-I_0\right|= o_n(1)\|\varphi\|,\  \left|I_n^{(2)}-I_0\right|= o_n(1)\|\varphi\|,\ \text{and} \    \left|I_n^{(3)}-I_0\right|= o_n(1)\|\varphi\|.
 \end{align}
%    \begin{align}   \label{A6}
% &|I_n^{(2)}-I_0|=\left|\mathrm{Re}\left(\int_{\R^3}\int_{\R^3}\left(|u_n(y)|^2-|u(y)|^2\right)u(x)\overline{\varphi}(x)G(x,y)dxdy\right)\right|\nonumber\\
% \le & \int_{\R^3}\int_{\R^3}\big||u_n(y)|^2-|u(y)|^2\big|\cdot  |u(x)|\cdot|\varphi(x)|G(x,y)dxdy
% = o_n(1)\|\varphi \|_A,
%    \end{align}
%    and 
% \begin{align}
% \label{A4}
% &|\widetilde{I_n^{(3)}}-I_0|=\left|\mathrm{Re}\left(\int_{\R^3}\int_{\R^3}\mathrm{Re}\left(|u_n(y)|\overline{u}(y)-|u(y)|^2\right)u(x)\overline{\varphi}(x)G(x,y)dxdy\right)\right|\nonumber\\
% =&\left|\mathrm{Re}\left(\int_{\R^3}\int_{\R^3} \mathrm{Re}\left[\big(u_n(y)-u(y)\big)\overline{u}(y)\right]u(x)\overline{\varphi}(x)G(x,y)dxdy\right)\right|\nonumber\\
%      \le&\int_{\R^3}\int_{\R^3} |u_n(y)-u(y)|\cdot |\overline{u}(y)|\cdot |u(x)|\cdot |\overline{\varphi}(x)|G(x,y)dxdy =o_n(1)\|\varphi\|_A.
% \end{align} 
Hence, it follows from \eqref{A0}, \eqref{A7} and \eqref{A8} that
\begin{align*}
    &\left|I_n-\mathrm{Re}\left(\int_{\R^3}\Phi_{|u_n|} u_n\overline{\varphi}dx\right)+\mathrm{Re}\left(\int_{\R^3}\Phi_{|u|} u\overline{\varphi}dx\right)\right|\\
\le &\left| I_n^{(1)}-I_0\right|+\left|I_n^{(2)}-I_0\right|+2\left|I_n^{(3)}-I_0\right|+2\left| I_n^{(4)}-I_0\right|= o_n(1)\|\varphi\|.
\end{align*}
Therefore, \eqref{A2} holds. We complete the proof.
\qed

\vskip 0.08truein

Inspired by \cite{S}, we establish a global compactness lemma for a $(PS)$ sequence of $I$, which provides a complete description of this $(PS)$ sequence.
\begin{lem}\label{lm4.2}
        Suppose $(A)$ and $(V)$ hold. Let $\{u_n\}\subset  H^1_A(\R^3,\mathbb{C})$ be a sequence such that
        \begin{align}\label{4.1}
                I(u_n)\to d, \ I'(u_n)\to 0\ \text{ in }H^*,\  \text{as}\ n\to +\infty,
        \end{align}
        where $H^*$ is the dual space of $H_A^1(\mathbb{R}^3,\mathbb{C})$, whose norm is denoted by $\|\cdot\|_*$.
        Then, replacing $\{u_n\}$ if necessary by a subsequence, there exists a solution $u_0\in H^1_A(\R^3,\mathbb{C})$ of equation \eqref{eqs1.1.0}, a finite sequence of solutions $\{v_j\}_{j=1}^k\subset H^1(\mathbb{R}^3,\mathbb{C})\setminus \{0\}$ of equation \eqref{1.8.2}, and k sequences of points $\{z_n^j\}\subset \mathbb{R}^3$ satisfying

        $(i)$\, $|z_n^j|\to +\infty$, $|z_n^j-z_n^i|\to +\infty$, $j\neq i$, as $n\to +\infty$;

        $(ii)$\, $u_n-u_0-\sum_{j=1}^k v_j(\cdot -z_n^j)\to 0$ in $H_A^1(\mathbb{R}^3,\mathbb{C})$;

        $(iii)$\, $d=I(u_0)+\sum_{j=1}^k I_{\infty}(v_j)$.
\end{lem}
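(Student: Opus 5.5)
We follow Struwe's iterative splitting scheme, adapted to the magnetic setting. The main tools are Lemma \ref{A} (norm equivalence), Lemma \ref{lm2.3.0} (Br\'ezis--Lieb splittings for the nonlocal term and its derivative) and the vanishing lemma.

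\emph{Step 1: boundedness and the weak limit.} From \eqref{4.1} we have
\[
d+1+o_n(1)\|u_n\|_A\ge I(u_n)-\tfrac14\langle I'(u_n),u_n\rangle=\tfrac14\|u_n\|_A^2 ,
\]
so $\{u_n\}$ is bounded in $H^1_A(\R^3,\C)$, and hence in $H^1(\R^3,\C)$ by Lemma \ref{A}. Passing to a subsequence, $u_n\rightharpoonup u_0$ weakly, strongly in $L^p_{loc}$ and almost everywhere. We check that $I'(u_0)=0$ by testing against $\varphi\in C_c^\infty$. The quadratic part passes to the limit by weak convergence. The nonlocal part $\mathrm{Re}\int\Phi_{|u_n|}u_n\bar\varphi$ also passes to the limit: $\Phi_{|u_n|}\rightharpoonup\Phi_{|u_0|}$ in $D^{1,2}$ and $u_n\to u_0$ in $L^2_{loc}$.

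\emph{Step 2: first splitting.} Set $u_n^1:=u_n-u_0$. For the quadratic form, $\|u_n\|_A^2=\|u_0\|_A^2+\|u_n^1\|_A^2+o_n(1)$. Since $A(x)\to0$ and $V(x)\to V_\infty$ at infinity while $u_n^1\to0$ in $L^2_{loc}$, we can also write
\[
\|u_n^1\|_A^2=\|u_n^1\|^2+o_n(1).
\]
This follows by expanding $|{-i}\nabla u+Au|^2=|\nabla u|^2+|A|^2|u|^2+2\mathrm{Re}(i\nabla u\,\overline{Au})$ and splitting into $B_R$ and $\R^3\setminus B_R$. The cross term is bounded by $\|\nabla u_n^1\|_{L^2}\|Au_n^1\|_{L^2}$, which is small. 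The same estimate works for the derivatives, giving
\[
\langle I'(u_n)-I'(u_0)-I'(u_n^1),\varphi\rangle=o_n(1)\|\varphi\|,\qquad \|I'(u_n^1)-I_\infty'(u_n^1)\|_*=o_n(1).
\]
Combining these with \eqref{A1} and \eqref{A2} yields
\[
I_\infty(u_n^1)\to d-I(u_0),\qquad I'_\infty(u_n^1)\to0 .
\]

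\emph{Step 3: iteration.} Let $\delta:=\limsup_n\sup_z\int_{B(z,1)}|u_n^1|^2$. If $\delta=0$, the vanishing lemma gives $u_n^1\to0$ in $L^{12/5}$, so by HLS $\int\Phi_{|u_n^1|}|u_n^1|^2\to0$. Since $\langle I'_\infty(u_n^1),u_n^1\rangle\to0$, this forces $\|u_n^1\|\to0$, and we stop with $k=0$. Otherwise choose $z_n^1$ with $\int_{B(z_n^1,1)}|u_n^1|^2\ge\delta/2$. Because $u_n^1\to0$ in $L^2_{loc}$, we must have $|z_n^1|\to\infty$. Then $u_n^1(\cdot+z_n^1)\rightharpoonup v_1\ne0$, and $I_\infty$ is translation invariant, so $v_1$ solves \eqref{1.8.2}. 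Define
\[
u_n^2:=u_n^1-v_1(\cdot-z_n^1)
\]
and repeat the splitting, now using only $I_\infty$ and its translation invariance together with Lemma \ref{lm2.3.0}. At stage $j$ this gives
\[
I_\infty(u_n^{j+1})=d-I(u_0)-\sum_{i\le j}I_\infty(v_i)+o_n(1),\qquad \|u_n^{j+1}\|^2=\|u_n\|^2-\dots .
\]
The separation $|z_n^j-z_n^i|\to\infty$ follows from the weak limits being taken in different frames: if two centres stayed at bounded distance, the later weak limit would be $0$.

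\emph{Termination and the main obstacle.} Every nontrivial solution $v$ of \eqref{1.8.2} lies in $\mathcal N_\infty$, so $I_\infty(v)\ge c_\infty>0$. Since $\|u_n\|$ is bounded and each $v_j$ subtracts at least $4c_\infty$ from the norm squared, the process stops after finitely many steps. The final remainder vanishes, which gives (ii), and the energy bookkeeping gives (iii).

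The delicate point is the transition from $I$ to $I_\infty$ for $u_n^1$. This requires the magnetic cross term $2\mathrm{Re}\int i\nabla u_n^1\cdot A\,\overline{u_n^1}$ and the derivative $\mathrm{Re}\int (|A|^2+V-V_\infty)u_n^1\bar\varphi$ to vanish uniformly in $\|\varphi\|\le1$. Only $A\to0$ at infinity and local $L^2$ convergence are available, with no decay rate, so the argument must use a careful $B_R$/complement split with $R$ fixed before $n\to\infty$. The nonlocal derivative splitting is handled by \eqref{A2}.
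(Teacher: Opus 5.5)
Your outline follows the same Struwe-type iteration as the paper: boundedness from $I(u_n)-\frac14\langle I'(u_n),u_n\rangle=\frac14\|u_n\|_A^2$, the weak limit $u_0$, the Br\'ezis--Lieb splittings of Lemma \ref{lm2.3.0}, the vanishing/non-vanishing dichotomy, and termination because each profile costs at least $c_\infty$. Your norm-based count, $\|v_j\|^2=4I_\infty(v_j)\ge 4c_\infty$, is fine. The one real departure is that you upgrade the first remainder to a genuine $(PS)$ sequence of $I_\infty$, i.e.\ you claim $\|I'(u_n^1)-I'_\infty(u_n^1)\|_*=o_n(1)$. The justification you give for this does not work.

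Expanding $\nabla_A u\,\overline{\nabla_A\varphi}$ produces three error terms. Two are harmless: $\mathrm{Re}\int(|A|^2+V-V_\infty)u_n^1\bar\varphi$, because $\|(|A|^2+V-V_\infty)u_n^1\|_{L^2}\to0$, and $\mathrm{Re}\int iAu_n^1\cdot\overline{\nabla\varphi}$, because $\|Au_n^1\|_{L^2}\to0$. The third, $\mathrm{Re}\int(-i\nabla u_n^1)\cdot A\,\bar\varphi$, is not. Its natural bound is $\|\nabla u_n^1\|_{L^2}\|A\varphi\|_{L^2}$. But $\|A\varphi\|_{L^2}$ is not small uniformly in $\|\varphi\|\le1$, and $\nabla u_n^1$ tends to $0$ only weakly. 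So ``the same estimate'' fails, and a $B_R$/complement split with $R$ fixed does not help on $B_R$ either. The term you single out as delicate in your last paragraph is in fact one of the easy ones; the problematic one is this cross term.

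The claim is still true, and you can repair it in either of two ways.

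(a) Observe that $\varphi\mapsto A\varphi$ is compact from $H^1(\R^3,\mathbb{C})$ to $L^2(\R^3,\mathbb{C}^3)$. It is the operator-norm limit of $\varphi\mapsto\chi_{B_R}A\varphi$, which are compact by Rellich, since $\sup_{|x|\ge R}|A(x)|\to0$ by $(A)$. Its adjoint is then compact and sends $-i\nabla u_n^1\rightharpoonup0$ to a strongly null sequence. This is exactly $\sup_{\|\varphi\|\le1}\bigl|\mathrm{Re}\int(-i\nabla u_n^1)\cdot A\bar\varphi\,dx\bigr|\to0$. With this, your later steps really do involve only the autonomous functional $I_\infty$.

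(b) Do what the paper does: never claim dual-norm convergence. Test only against $\varphi_n=\varphi(\cdot-z_n^j)$ with $\varphi$ fixed and $|z_n^j|\to\infty$. Then $\|A\varphi_n\|_{L^2}\to0$ by dominated convergence, which suffices to show that each weak limit $v_j$ solves \eqref{1.8.2}. The energy bookkeeping needs only $\langle I'_\infty(u_n^{(j)}),u_n^{(j)}\rangle\to0$, which comes from the scalar splittings.

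Route (b) is more elementary, but it forces you to reuse the decay of $A$ and $V-V_\infty$ (and $|z_n^j|\to\infty$) at every stage of the iteration. Route (a) costs one compactness observation and makes the iteration cleaner.
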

\noindent{\bf Proof.} For $n$ large enough, by \eqref{4.1}, we have 
\begin{align*}%\label{b}
        d+1+\|u_n\|_A\geq I(u_n)-\frac{1}{4}\left\langle I'(u_n),u_n \right\rangle
       =\frac{1}{4}\|u_n\|_A^2,
\end{align*}
%Similar to \eqref{4.0}, 
so $\{u_n\}$ is bounded in $H_A^1(\mathbb{R}^3,\mathbb{C})$.
% Therefore,
% \begin{align}\label{4.1.4}
%         o_n(1)= \left\langle I'_+(u_n),(u_n)_- \right\rangle =\|(u_n)_-\|^2-\int_{\R^3}\Phi_0 ((u_n)_-)^2 dx\geq \|(u_n)_-\|^2,
%  \end{align}
%  where $u_-:=\min \{u,0\}$.
%  Since $I_+\in C^1(H^1(\mathbb{R}^3),\R)$, by \eqref{4.1} and \eqref{4.1.4}, we have
%  \begin{align}
%         I((u_n)_+)= &I_+((u_n)_+)=I_+(u_n)+o_n(1)=d+o_n(1),\label{4.1.2}\\
%         \|I'((u_n)_+)\| _{H^{-1}( \mathbb{R} ^3)}=&\|I'_+((u_n)_+)\| _{H^{-1}( \mathbb{R} ^3)}=\|I'_+(u_n)\| _{H^{-1}( \mathbb{R} ^3)}+o_n(1)=o_n(1).\label{4.1.3}
%  \end{align}
%  Namely, $\{(u_n)_+\}$ is a nonnegative $(PS)_d$ sequence of $I$.
% Because $\{(u_n)_+\}$ is bounded in $H^1(\mathbb{R}^3)$, 
Then there exists $u_0\in H_A^1(\mathbb{R}^3,\mathbb{C})$ such that, up to a subsequence,
\begin{eqnarray}\label{4.1.1}
        \left\{\begin{array}{ll}
                u_n\rightharpoonup u_0 \ \ \text{weakly in}\  H_A^1(\mathbb{R}^3,\mathbb{C}),\\
                u_n\rightarrow u_0 \ \ \text{strongly in}\  L^p_{loc}(\mathbb{R} ^3,\mathbb{C}), \ p\in [2,6).\\
        \end{array} \right.	
\end{eqnarray}
Moreover, similar to \cite[Corollary 9]{B1}, we know that for each $\varphi\in H_A^1(\mathbb{R}^3,\mathbb{C})$, it holds  
\[
\lim_{n\to \infty}\mathrm{Re}\left(\int_{\R^3}\Phi_{|u_n|}u_n\bar{\varphi}dx\right)=\mathrm{}{Re}\left(\int_{\R^3}\Phi_{|u_0|}u_0\bar{\varphi}dx\right).
\]
So,
\begin{align*}
       \left\langle I'(u_0),\varphi \right\rangle =(u_0,\varphi )_A -\mathrm{Re}\left(\int_{\R^3}\Phi_{|u_0|} u_0\bar{\varphi} dx\right)=0.
\end{align*}
Hence, $I'(u_0)=0$ and $u_0$ is a solution of equation \eqref{eqs1.1.0}.

Denote $u_n^{(1)}:=u_n-u_0$. We are going to prove that 
\begin{align}\label{4.2}
        I_{\infty}(u_n^{(1)})=d-I(u_0)+o_n(1).\
        %\langle I'_{\infty}(u_n^{(1)}),\varphi\rangle =o_n(1) \text{ for each }\varphi \in H_A^1(\R^3,\mathbb{C}).
        %I'_{\infty}(u_n^{(1)})=o_n(1)\ \text{in} \ H^*,
\end{align}
%where $\varphi \in H_A^1(\R^3,\mathbb{C})$.
%where $H^*$ is the dual space of $H_A^1(\mathbb{R}^3,\mathbb{C})$, whose norm is denoted by $\|\cdot\|_*$.
%that is, $\{u_n^{(1)}\}$ is a $(PS)_{d-I(u_0)}$ sequence of $I_{\infty}$. 
In fact, 
\begin{align*}
    &\int_{\R^3}\left|\nabla u_n^{(1)}\right|^2dx
    =\int_{\R^3}\left|-i\nabla u_n^{(1)}\right|^2dx
    =\int_{\R^3}\left|\nabla_A u_n^{(1)}-A(x)u_n^{(1)}\right|^2dx\\
    =&\int_{\R^3}\left(\left|\nabla_A u_n^{(1)}\right|^2 +\left|A(x)u_n^{(1)}\right|^2 -2\mathrm{Re}\left(\nabla_A u_n^{(1)}A(x)\overline{u_n^{(1)}}\right)\right)dx.
\end{align*}
Since $u_n^{(1)}\to 0$ in  $ L^2_{loc}(\mathbb{R} ^3,\mathbb{C})$ as $n \to +\infty$, by $(A)$, we obtain 
\begin{align}\label{3.14}
    \int_{\R^3} \left|A(x)u_n^{(1)}\right|^2 dx=o_n(1).
\end{align}
According to the H{\"o}lder inequality, we get 
\begin{align*}
    \left|\int_{\R^3}\mathrm{Re}\left(\nabla_A u_n^{(1)}A(x)\overline{u_n^{(1)}}\right)dx\right|
    \leq \int_{\R^3}\left|\nabla_A u_n^{(1)}A(x)\overline{u_n^{(1)}}\right|dx
    \leq \|\nabla_A u_n^{(1)}\|_{L^2(\mathbb{R}^3)} \|A u_n^{(1)}\|_{L^2(\mathbb{R}^3)}=o_n(1).
\end{align*}
Thus, 
\begin{align}\label{4.3}
    \int_{\R^3}\left|\nabla u_n^{(1)}\right|^2dx
    =&\int_{\R^3}\left|\nabla_A u_n^{(1)}\right|^2dx+o_n(1)
    =\int_{\R^3}|\nabla_A u_n-\nabla_A u_0|^2dx+o_n(1)\nonumber\\
    =&\int_{\R^3}|\nabla_A u_n|^2dx+\int_{\R^3}|\nabla_A u_0|^2dx-2\mathrm{Re}\int_{\R^3}\nabla_A u_n\overline{\nabla_Au_0}\ dx+o_n(1)\nonumber\\
   =& \int_{\R^3}|\nabla_A u_n|^2dx-\int_{\R^3}|\nabla_A u_0|^2dx+o_n(1),
\end{align}
where we used \eqref{4.1.1}.

Similar to \eqref{3.14}, taking \eqref{4.1.1} into consideration, we have 
\begin{align}\label{3.16.1}
\int_{\R^3} V_{\infty}|u_n^{(1)}|^2dx
=&\int_{\R^3} V(x)|u_n^{(1)}|^2dx+o_n(1)
=\int_{\R^3} V(x)\left(|u_n|^2+|u_0|^2
-2\mathrm{Re}(u_n\overline{u_0})\right)dx+o_n(1)\nonumber\\
=&\int_{\R^3}V(x)|u_n|^2dx-\int_{\R^3}V(x)|u_0|^2dx+o_n(1).
\end{align}
From Lemma \ref{lm2.3.0}, we obtain
\begin{align}\label{3.11}
    \int_{\mathbb{R}^3}\Phi_{|u_n^{(1)}|} \left|u_n^{(1)}\right|^2 dx=\int_{\mathbb{R}^3}\Phi_{|u_n|} |u_n|^2 dx-\int_{\mathbb{R}^3}\Phi_{|u_0|} |u_0|^2 dx+o_n(1).
\end{align}
Then, combining \eqref{4.3}, \eqref{3.16.1} and \eqref{3.11}, we derive 
\begin{align}
     &I_{\infty}(u_n^{(1)})
     =I(u_n)-I(u_0)+o_n(1)
     =d-I(u_0)+o_n(1),\nonumber\\
     &\langle I'_{\infty}(u_n^{(1)}),u_n^{(1)}\rangle=\langle I'(u_n),u_n\rangle -\langle I'(u_0),u_0\rangle+o_n(1)
     =o_n(1).\label{L2}
\end{align}
% Moreover, since $u_n^{(1)} \rightharpoonup 0$ in $H_A^1(\R^3,\mathbb{C})$, combining \cite[Lemma 2.1]{CW}, we know that for each $\varphi \in H_A^1(\R^3,\mathbb{C})$, it holds
% \begin{align*}%\label{L3}
%     \langle I'(u_n^{(1)}),\varphi \rangle=(u_n^{(1)},\varphi) -\mathrm{Re}\left(\int_{\R^3}\Phi_{|u_n^{(1)}|} u_n^{(1)}\bar{\varphi} dx\right)=o_n(1).
% \end{align*}
Hence, \eqref{4.2} holds.

Let
\begin{align*}
        \delta_1:=\lim_{n\to +\infty}\sup_{z\in \R^3}\int_{B(z,1)}\left|u_n^{(1)}(x)\right|^2dx.
\end{align*}
If $\delta_1=0$, from the vanishing lemma, one has $|u_n^{(1)}|\rightarrow  0$ in $L^p(\mathbb{R} ^3,\mathbb{R})$ for $p\in (2,6)$. 
%So, $u_n^{(1)}\rightarrow  0$ in $L^p(\mathbb{R} ^3,\mathbb{C})$ for $p\in (2,6)$. 
%Since $\{u_n^{(1)}\}$ is bounded in $H_A^1(\mathbb{R}^3,\mathbb{C})$, 
By \eqref{L2} and the Hardy-Littlewood-Sobolev inequality \eqref{HLS2}, we obtain
\begin{align}\label{4.7}
       \|u_n^{(1)}\|^2
        =\left\langle I'_{\infty}(u_n^{(1)}),u_n^{(1)} \right\rangle+\int_{\R^3}\Phi_{|u_n^{(1)}|} \left|u_n^{(1)}\right|^2dx
        \leq o_n(1)+C \|u_n^{(1)}\|^4_{L^{\frac{12}{5}}(\mathbb{R}^3)}=o_n(1),
\end{align}
which yields that
\[
I_{\infty}(u_n^{(1)})=o_n(1).
\]
Hence, %$u_n\to u_0$ in $H_A^1(\R^3,\mathbb{C})$, $d=I(u_0)$, and $u_0$ is a solution of the equation \eqref{eqs1.1.0}.
combining \eqref{4.2}, we derive $d=I(u_0)$, and the conclusion is obtained.
%\begin{align*}
 %       d=I(u_0).
%\end{align*}
% Combining \eqref{4.1.4} and \eqref{4.7}, it holds
% \begin{align*}
%         \|u_n-u_0\|\leq \|(u_n)_+-u_0\|+\|(u_n)_-\|=o_n(1),
% \end{align*}
% hence the conclusion is obtained.

If $\delta_1>0$, there exists $\{z_n^1\}\subset \R^3$ such that
\begin{align*}
        \int_{B(z_n^1,1)}\left|u_n^{(1)}(x)\right|^2dx\geq \frac{\delta_1}{2}.
\end{align*}
Due to $|u_n^{(1)}|\to 0$ in $L^2_{loc}(\mathbb{R}^3,\R)$, we get $|z_n^1| \to +\infty$. Letting $v_n^{(1)}(x):=u_n^{(1)}(x+z_n^1)$, there exists $v_1\in H_A^1(\mathbb{R}^3,\mathbb{C})$ such that, up to a subsequence,
\begin{eqnarray}\label{4.9}
        \left\{\begin{array}{ll}
                v_n^{(1)}\rightharpoonup v_1 \ \ \text{weakly in}\  H_A^1(\mathbb{R}^3,\mathbb{C}),\\
                v_n^{(1)}\rightarrow v_1 \ \ \text{strongly in}\  L^p_{loc}(\mathbb{R} ^3,\mathbb{C}), \ p\in [2,6).\\
        \end{array} \right.	
\end{eqnarray}
Consequently,
\begin{align*}
        \int_{B(0,1)}|v_1(x)|^2dx=\lim_{n\to +\infty}\int_{B(0,1)}\left|v_n^{(1)}(x)\right|^2dx
        =\lim_{n\to +\infty}\int_{B(z_n^1,1)}\left|u_n^{(1)}(x)\right|^2dx\geq \frac{\delta_1}{2},
\end{align*}
which implies that $v_1\neq 0$.
We deduce from \eqref{4.2} that
\begin{align}
        I_{\infty}(v_n^{(1)})&= I_{\infty}(u_n^{(1)})=d-I(u_0)+o_n(1).\label{4.10}
\end{align}

Next, we are going to prove 
\begin{align}
    \langle I'_{\infty}(v_n^{(1)}),\varphi \rangle&=\langle I'_{\infty}(u_n^{(1)}), \varphi (x-z_n^1)\rangle=o_n(1),\ \text{ for any } \varphi\in H_A^1(\R^3,\mathbb{C}).\label{4.11}
\end{align}
Define $\varphi_n(x):=\varphi(x-z_n^1)$. 
By ($A$), for $R$ large enough, we have
\begin{align*}
    \|A\overline{ \varphi_n}\|_{L^2(\R^3)}^2=\int_{\R^3}|A(x+z_n^1)\varphi(x)|^2dx=\int_{B_R}|A(x+z_n^1)\varphi(x)|^2dx+\int_{\R^3\setminus B_R}|A(x+z_n^1)\varphi(x)|^2dx=o_n(1),
\end{align*}
and
\begin{align*}
   \|Au_n^{(1)}\|_{L^2(\R^3)}^2=\int_{B_R}|A u_n^{(1)}(x)|^2dx+\int_{\R^3\setminus B_R}|A u_n^{(1)}(x)|^2dx=o_n(1).
\end{align*}
Similarly, $\||A|^2u_n^{(1)}\|_{L^2(\R^3)}=o_n(1)$. Moreover, it follows from 
($V$) that 
\begin{align*}
    \mathrm{Re}\left(\int_{\R^3}(V_{\infty}-V)u_n^{(1)}\overline{\varphi_n}dx\right)=o_n(1).
\end{align*}
Recall that $I'(u_n)=o_n(1)$ and  $I'(u_0)=0$, we have
\[
\langle I'(u_n),\varphi_n\rangle =o_n(1),\ \ \text{and}\ \ \langle I'(u_0),\varphi_n\rangle =o_n(1).
\]
So, combining Lemma \ref{lm2.3.0}, it holds
\begin{align*}
   & \left|\langle I'_{\infty}(u_n^{(1)}), \varphi_n\rangle\right|
   \leq \left|\langle I'_{\infty}(u_n^{(1)}), \varphi_n\rangle-\langle I'(u_n),\varphi_n\rangle+\langle I'(u_0),\varphi_n\rangle \right|+o_n(1)\nonumber\\
  \leq &\left|\mathrm{Re}\left(\int_{\R^3}( \nabla u_n^{(1)}\overline{\nabla \varphi_n}-\nabla_A u_n^{(1)}\overline{\nabla_A \varphi_n})dx\right)\right|
  +\left|\mathrm{Re}\left(\int_{\R^3}(V_{\infty}-V)u_n^{(1)}\overline{\varphi_n}dx\right)\right|\nonumber\\
   &+\left|\mathrm{Re}\left(\int_{\R^3}(\Phi_{|u_n^{(1)}|}u_n^{(1)}-\Phi_{|u_n|}u_n-\Phi_{|u_0|}u_0)\overline{\varphi_n}dx\right)\right|+o_n(1)\nonumber\\
   =&\left|\mathrm{Re}\left(\int_{\R^3}(\nabla u_n^{(1)}\overline{\nabla \varphi_n}-\nabla_A u_n^{(1)}\overline{\nabla_A \varphi_n})dx\right)\right|+o_n(1)\nonumber\\
   \leq &\left|\mathrm{Re}\left(\int_{\R^3}i(\nabla u_n^{(1)}A\overline{ \varphi_n}-A u_n^{(1)}\overline{\nabla \varphi_n})dx\right)\right|
+\left|\mathrm{Re}\left(\int_{\R^3}|A|^2u_n^{(1)}\overline{\varphi_n}dx\right)\right|+o_n(1)\nonumber\\
   \le &\int_{\R^3}|\nabla u_n^{(1)}|\cdot|A \varphi_n|dx+\int_{\R^3}|A u_n^{(1)}|\cdot|\nabla \varphi_n|dx+\int_{\R^3}|A|^2|u_n^{(1)}|\cdot|\varphi_n|dx+o_n(1)\nonumber\\
   \le &\|\nabla u_n^{(1)}\|_{L^2(\R^3)}\|A\varphi_n\|_{L^2(\R^3)}+\|\nabla \varphi_n\|_{L^2(\R^3)}\|Au_n^{(1)}\|_{L^2(\R^3)}+\||A|^2u_n^{(1)}\|_{L^2(\R^3)}\|\varphi_n\|_{L^2(\R^3)}+o_n(1)\nonumber\\
   =&o_n(1).
\end{align*}
% where we used
% \begin{align*}
%     \|A\overline{ \varphi_n}\|_{L^2(\R^3)}^2=\int_{\R^3}|A(x+z_n^1)\varphi(x)|^2dx=\int_{B_R}|A(x+z_n^1)\varphi(x)|^2dx+\int_{\R^3\setminus B_R}|A(x+z_n^1)\varphi(x)|^2dx=o_n(1),
% \end{align*}
% and
% \begin{align*}
%    \|Au_n^{(1)}\|_{L^2(\R^3)}^2=\int_{B_R}|A u_n^{(1)}(x)|^2dx+\int_{\R^3\setminus B_R}|A u_n^{(1)}(x)|^2dx=o_n(1).
% \end{align*}
Then, combining \eqref{4.9} and \eqref{4.11}, we obtain $I'_{\infty}(v_1)=0$ and hence $v_1$ is a nontrivial solution of the equation \eqref{1.8.2}.

Define $u_n^{(2)}(x):=u_n^{(1)}(x)-v_1(x-z_n^1)$. %Similar to \eqref{L1},
Using \eqref{L2}, \eqref{4.10} and Lemma \ref{lm2.3.0}, we have
\begin{align}
        I_{\infty}(u_n^{(2)})=&I_{\infty}(v_n^{(1)}-v_1)=I_{\infty}(v_n^{(1)})-I_{\infty}(v_1)+o_n(1)=d-I(u_0)-I_{\infty}(v_1)+o_n(1),\label{4.13}\\
      % &I'_{\infty}(u_n^{(2)})=I'_{\infty}(v_n^{(1)}-v_1)=I'_{\infty}(v_n^{(1)})-I'_{\infty}(v_1)+o_n(1)=o_n(1) \text{in} H^*\label{4.131},
        \langle I'_{\infty}(u_n^{(2)}),u_n^{(2)}\rangle
        =&\langle I'_{\infty}(v_n^{(1)}-v_1), v_n^{(1)}-v_1\rangle
       =\langle I'_{\infty}(v_n^{(1)}),v_n^{(1)}\rangle+\langle I'_{\infty}(v_1),v_1\rangle+o_n(1)\nonumber\\
       =&\langle I'_{\infty}(u_n^{(1)}),u_n^{(1)}\rangle+o_n(1)
       =o_n(1).\nonumber
\end{align}
%where $H^*$ is the dual space of $H_A^1(\mathbb{R}^3,\mathbb{C})$, whose norm is denoted by $\|\cdot\|_*$.
% In fact, using \eqref{4.10}, \eqref{4.11} and Lemma \ref{lm2.3.0}, \eqref{4.13} is easy to obtain. Next, we prove \eqref{4.131}. Using Lemma \ref{lm2.3.0}, we have
 %  \begin{align*}%\label{4.5}
 %  &\|I'_{\infty}(u_n^{(2)})\|_*=\|I'_{\infty}(v_n^{(1)})-I'_{\infty}(v_1)\|_*=\sup_{\|\varphi \|_A=1}\left|\langle I'_{\infty}(v_n^{(1)}),\varphi \rangle -\langle I'_{\infty}(v_1),\varphi \rangle\right|\\
 %  &    
 % \end{align*}
% which, together with \eqref{4.1}, gives the second equality of \eqref{4.2}.
%that is, $\{u_n^{(2)}\}$ is a $(PS)_{d-I(u_0)-I_{\infty}(v_1)}$ sequence of $I_{\infty}$.
Denote
\begin{align*}
        \delta_2:=\lim_{n\to +\infty}\sup_{z\in \R^3}\int_{B(z,1)}\left|u_n^{(2)}(x)\right|^2dx.
\end{align*}
If $\delta_2=0$, according to the vanishing lemma, we get $|u_n^{(2)}|\rightarrow 0$ in $L^p(\mathbb{R} ^3,\R)$ for $p\in (2,6)$. Analogous to \eqref{4.7}, it holds that $\| u_n^{(2)}\| =o_n(1)$, which, together with \eqref{4.13}, gives $d=I(u_0)+I_{\infty}(v_1)$. %$\|\nabla u_n^{(2)}\|_{L^2(\R^3)}^2+V_{\infty}\|u_n^{(2)}\|_{L^2(\R^3)}^2=o_n(1)$.
Using Lemma \ref{A}, we have $\|u_n^{(2)}\|_A=o_n(1)$, that is, $u_n^{(2)}\rightarrow 0$ in $H_A^1(\mathbb{R}^3,\mathbb{C})$.
% \begin{align*}
%     &\|u_n^{(2)}\|_A^2=\|\nabla u_n^{(2)}\|_{L^2(\R^3)}^2+\int_{\R^3}(|A|^2+V(x))|u_n^{(2)}|^2dx+2\mathrm{Im}\left(\int_{\R^3}A\overline{u_n^{(2)}}\cdot \nabla u_n^{(2)}dx\right)\\
%     \leq &\int_{B_R}(|A|^2+V(x))|u_n^{(2)}|^2dx+\int_{\R^3\setminus{B_R}}(|A|^2+V(x))|u_n^{(2)}|^2dx+\|Au_n^{(2)}\|_{L^2(\R^3)}\|\nabla u_n^{(2)}\|_{L^2(\R^3)}+o_n(1)\\
%     \le&C \int_{B_R}|u_n^{(2)}|^2dx+\int_{\R^3\setminus{B_R}}V_{\infty}|u_n^{(2)}|^2dx+o_n(1)=o_n(1).
% \end{align*}
% Then we have
% \begin{align*}
%         \|u_n-u_0-v_1(\cdot -z_n^1)\|_A\leq %&\|(u_n)_+-u_0-v_1(\cdot -z_n^1)\|+\|(u_n)_-\|\\
%         \|u_n^{(2)}\|_A+\|(u_n)_-\|=o_n(1).
% \end{align*}
The conclusion is obtained.

If $\delta_2>0$, there exists $\{z_n^2\}\subset \R^3$ such that
\begin{align}\label{v2}
        \int_{B(z_n^2,1)}\left|u_n^{(2)}(x)\right|^2dx\geq \frac{\delta_2}{2}.
\end{align}
Note that
\begin{align*}
        \int_{B(z_n^2-z_n^1,1)}\left|v_n^{(1)}(y)-v_1(y)\right|^2dy
        =\int_{B(z_n^2,1)}\left|u_n^{(1)}(x)-v_1(x-z_n^1)\right|^2dx
        =\int_{B(z_n^2,1)}\left|u_n^{(2)}(x)\right|^2dx\geq \frac{\delta_2}{2},
\end{align*}
which, together with \eqref{4.9}, implies $|z_n^2-z_n^1| \to +\infty$. Next, we claim that $|z_n^2| \to +\infty$. In fact, if $|z_n^2|$ is bounded, there exists $R>0$ such that $B(z_n^2,1)\subset B_R$. Therefore, it follows from $|u_n^{(1)}|\to 0$ in $L^2_{loc}(\mathbb{R}^3,\mathbb{R})$ and $|z_n^1| \to +\infty$ that
\begin{align*}
        \frac{\delta_2}{2}\leq &\int_{B(z_n^2,1)}\left|u_n^{(2)}(x)\right|^2dx
        \leq \int_{B_R}\left|u_n^{(1)}(x)-v_1(x-z_n^1)\right|^2dx\\
        \leq &2\int_{B_R}\left|u_n^{(1)}\right|^2dx+2\int_{B_R}|v_1(x-z_n^1)|^2dx\\
        =&o_n(1)+2\int_{B(-z_n^1,R)}|v_1|^2dx=o_n(1),
\end{align*}
which is a contradiction.

Define $v_n^{(2)}(x):=u_n^{(2)}(x+z_n^2)$, combining \eqref{v2}, then there exists $v_2\in H_A^1(\R^3,\mathbb{C})\setminus \{0\}$ such that, up to a subsequence,
\begin{eqnarray}\label{4.9.0}
        \left\{\begin{array}{ll}
                v_n^{(2)}\rightharpoonup v_2 \ \ \text{weakly in}\  H_A^1(\mathbb{R}^3,\mathbb{C}),\\
                v_n^{(2)}\rightarrow v_2 \ \ \text{strongly in}\  L^p_{loc}(\mathbb{R} ^3,\mathbb{C}), \ p\in [2,6).\\
        \end{array} \right.	
\end{eqnarray}
Then, similar to \eqref{4.11}, we have $\langle I'_{\infty}(u_n^{(1)}),\varphi(x-z_n^2)\rangle=o_n(1)$ for any $\varphi\in H_A^1(\R^3,\mathbb{C})$. So, it follows from Lemma \ref{lm2.3.0} that 
\begin{align*}
   \langle  I'_{\infty}(v_n^{(2)}),\varphi \rangle 
   =&\langle I'_{\infty}(u_n^{(2)}),\varphi(x-z_n^2)\rangle 
   =\langle I'_{\infty}(v_n^{(1)}-v_1),\varphi(x+z_n^1-z_n^2)\rangle \\
    =&\langle I'_{\infty}(v_n^{(1)}),\varphi(x+z_n^1-z_n^2)\rangle- \langle I'_{\infty}(v_1),\varphi(x+z_n^1-z_n^2)\rangle+o_n(1)\\
   =&\langle I'_{\infty}(u_n^{(1)}),\varphi(x-z_n^2)\rangle+o_n(1)
   =o_n(1).
\end{align*}
According to \eqref{4.9.0}, we derive $I'_{\infty}(v_2)=0$ and hence $v_2$ is a nontrivial solution of equation \eqref{1.8.2}.
%\vskip 0.08truein

Iterating the above procedure, we can construct $\{z_n^j\}$ and $\{v_j\}$. For every $j$, $I_{\infty}(v_j)\geq c_{\infty}>0$, so the iteration must terminate at a finite number of times.
\qed
%\begin{rem}
%   By Lemma \ref{A}, we know $u_n-u_0-\sum_{j=1}^kv_j(\cdot -z_n^j)\to 0 $ in $H^1(\mathbb{R}^3,\mathbb{C})$.
%\end{rem}

According to Lemma \ref{lm4.2}, we obtain the following result.
\begin{lem}\label{lm3.6}
         Suppose $(A)$ and $(V)$ hold. If $c<c_{\infty}$, then equation \eqref{eqs1.1.0} has a ground state solution.
\end{lem}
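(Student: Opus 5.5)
Take a minimizing sequence for $c$ on the Nehari manifold, upgrade it to a Palais--Smale sequence, and use Lemma \ref{lm4.2} together with $c<c_{\infty}$ to rule out all bubbles.

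\textbf{Step 1: Palais--Smale sequence.} By Lemma \ref{lm2.1} and the standard mountain-pass characterization, the Nehari level satisfies
\[
c=\inf_{v\neq 0}\max_{t>0} I(tv).
\]
The functional $I$ has mountain pass geometry: $I(u)\ge \frac14\|u\|_A^2$ near $0$, and $I(tv)\to-\infty$ as $t\to\infty$. Hence Ekeland's variational principle on $\mathcal N$, or the mountain pass theorem without (PS), yields a sequence $\{u_n\}$ with $I(u_n)\to c$ and $I'(u_n)\to0$ in $H^*$. One concrete route is Ekeland on $\mathcal N$: this gives $I'(u_n)-\lambda_nJ'(u_n)\to0$. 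Since $\langle J'(u_n),u_n\rangle=-2\|u_n\|_A^2\le -2\rho^2$ (Lemma \ref{lm3.1}) and $\{u_n\}$ is bounded, we get $\lambda_n\to0$, which is the one mildly technical point.

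\textbf{Step 2: Decomposition.} Apply Lemma \ref{lm4.2} to $\{u_n\}$. Up to a subsequence this gives a solution $u_0$ of \eqref{eqs1.1.0} and $k$ nontrivial solutions $v_j$ of \eqref{1.8.2} with
\[
c=I(u_0)+\sum_{j=1}^k I_{\infty}(v_j).
\]
Each $v_j$ lies in $\mathcal N_\infty$, so $I_\infty(v_j)\ge c_\infty$. Moreover, since $I'(u_0)=0$,
\[
I(u_0)=I(u_0)-\tfrac14\langle I'(u_0),u_0\rangle=\tfrac14\|u_0\|_A^2\ge0.
\]

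\textbf{Step 3: No bubbles.} If $k\ge1$, then $c\ge c_\infty$, contradicting $c<c_\infty$. Hence $k=0$, and part $(ii)$ of Lemma \ref{lm4.2} gives $u_n\to u_0$ strongly in $H^1_A$, with $I(u_0)=c>0$. In particular $u_0\ne0$.

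\textbf{Step 4: Conclusion.} Since $I'(u_0)=0$ and $u_0\neq0$, we have $u_0\in\mathcal N$, and $I(u_0)=c$, so $u_0$ minimizes $c$. By Lemma \ref{lm3.1}, $u_0$ is a ground state solution of \eqref{eqs1.1.0}.

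The only point requiring care is Step 1, namely producing a genuine (PS)$_c$ sequence rather than just a minimizing sequence. I would handle it via Ekeland together with the Lagrange multiplier estimate $\lambda_n\to0$ described above, using the boundedness of Nehari minimizing sequences, since $I=\frac14\|u\|_A^2$ on $\mathcal N$.
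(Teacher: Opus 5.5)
Your proposal is correct and matches the paper's proof step for step. The paper also applies Ekeland's principle on $\mathcal N$ and shows $\lambda_n\to0$ from $\langle J'(u_n),u_n\rangle=-2\|u_n\|_A^2$ and boundedness; note it additionally checks $\|J'(u_n)\|_*\le C$ to conclude $I'(u_n)\to0$, which you leave implicit. It then applies Lemma \ref{lm4.2} with $I_\infty(v_j)\ge c_\infty$ to force $k=0$; your lower bound $I(u_0)=\frac14\|u_0\|_A^2\ge0$ is only a cosmetic variant of the paper's case split $u_0=0$ versus $u_0\neq0$.
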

\noindent{\bf Proof.} 
 Let \(\{w_n\}\) be a minimizing sequence of $c$. By Lemma \ref{lm3.1} and the Ekeland Variational Principle \cite[Theorem 8.5]{W}, we know that there exist $\{\lambda_n\}\subset\R $ and $\{u_n\}\subset\mathcal{N}$ such that 
 \begin{align}\label{1}
     I(u_n)\to c,\ \|I'(u_n)-\lambda_n J'(u_n)\|_*\to 0 \ \text{and}\ \|u_n-w_n\|_A\to 0 \ \text{as}\ n\to +\infty.
 \end{align}
 %Recall that $\{u_n\}\subset\mathcal{N}$, combining with \eqref{1}, 
% Similar to \eqref{b}, we obtain 
 % Moreover,
 Note that
 \begin{align*}
     %\label{eq1}
     c+o_n(1)=I(u_n)=I(u_n)-\frac{1}{4}J(u_n)=\frac{1}{4}\|u_n\|_A^2,
 \end{align*}
 which implies that $\{u_n\}$ is bounded in $H_A^1(\mathbb{R}^3,\mathbb{C})$. 
Then,
\begin{align*}
    o_n(1)=\langle I'(u_n),u_n\rangle-\lambda_n\langle J'(u_n),u_n\rangle
    =-\lambda_n\langle J'(u_n),u_n\rangle
    =2\lambda_n\|u_n\|_A^2=8\lambda_n(c+o_n(1)).
\end{align*}
%which yields 
Therefore, $\lambda_n=o_n(1)$.
Moreover, 
\begin{align*}
  \|J'(u_n)\|_*&=\sup_{\|\varphi\|_A=1}\langle J'(u_n),\varphi\rangle= \sup_{\|\varphi\|_A=1}\mathrm{Re}\left(2\int_{\R^3} [\nabla_A u_n \overline{\nabla_A \varphi} +V(x) u_n\overline{\varphi}]dx-4\int_{\R^3}\Phi_{|u_n|}u_n\overline{\varphi} dx\right)\\
  &\le C \left(\|u_n\|_A+\|u_n\|_A^3\right)\le C,  
\end{align*}
 % one has
 % \begin{align}
 %     \langle J'(u_n),u_n\rangle=2\|u_n\|_A^2-4\int_{\R^3}\Phi_{u_n}u_n^2dx=-2\|u_n\|_A^2,
 % \end{align}
 which, together with \eqref{1}, gives
 \begin{align*}
   \|I'(u_n)\|_*\leq \|I'(u_n)-\lambda_n J'(u_n)\|_*
   +|\lambda_n|\cdot \|J'(u_n)\|_*
   =o_n(1).
 \end{align*}
 Hence, \(\{u_n\}\) is a bounded \((PS)_c\) sequence of $I$. %Similar to \eqref{b}, \(\{v_n\}\) is bounded in \(H^1_A(\mathbb{R}^3, \mathbb{C})\). 
 %Furthermore, there exists \(u^0 \in H^1_A(\mathbb{R}^3, \mathbb{C})\) such that
%\[
%u_n \rightharpoonup u^0 \quad \text{weakly in } H^1_A(\mathbb{R}^N, \mathbb{C}).
%\]
%Suppose that \(u_n \not\to u^0\) in \(H^1_A(\mathbb{R}^3, \mathbb{C})\). 

From Lemma \ref{lm4.2}, there exists a solution $u^0$ of equation \eqref{eqs1.1.0} and \(k\) nontrivial solutions \(v_1, v_2, \dots, v_k \in H^1(\mathbb{R}^3, \mathbb{C})\) of equation \eqref{1.8.2} such that
\[
c = I(u^0) + \sum_{j=1}^k I_{\infty}(v_j).
\]
If $u^0\neq 0$, then $I(u^0)\geq c>0$; if $u^0=0$, then $I(u^0)=0$. So, if $k\geq 1$, then 
\[
c= I(u^0) + \sum_{j=1}^k I_{\infty}(v_j)\ge kc_{\infty}\geq c_{\infty},
\]
which is a contradiction with $c<c_{\infty}$. Hence, $k=0$ and $I(u^0)=c$, which means that $u^0$ is a ground state solution of equation \eqref{eqs1.1.0}. 
\qed 

%Conversely, when $c=c_{\infty}$, the result is different.
%Moreover, if $c$ is not attained, we have the following results.
By Lemmas \ref{lm3.3} and \ref{lm3.6}, we have the following corollary.

\begin{cor}\label{co1}
         Suppose $(A)$ and $(V)$ hold. If equation \eqref{eqs1.1.0} has no ground state solution, then $c=c_{\infty}$.
\end{cor}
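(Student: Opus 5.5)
The corollary follows from the two preceding lemmas by contraposition. The plan is to combine the upper bound $c\le c_{\infty}$ from Lemma \ref{lm3.3} with the strict-inequality criterion of Lemma \ref{lm3.6}.

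First, assume $(A)$ and $(V)$ hold and that equation \eqref{eqs1.1.0} has no ground state solution. By Lemma \ref{lm3.3}, which only uses $(A)$ and $(V)$ (via the translated test functions $t_nu_{\infty}(\cdot-y_n)\in\mathcal{N}$ with $|y_n|\to+\infty$), we have $c\le c_{\infty}$.

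Second, suppose for contradiction that $c<c_{\infty}$. Then Lemma \ref{lm3.6} applies: the Ekeland-produced bounded $(PS)_c$ sequence on $\mathcal{N}$, decomposed by the global compactness Lemma \ref{lm4.2}, can contain no bubbles $v_j$, since each would cost at least $c_{\infty}>c$. Hence $c$ is attained by some $u^0\in\mathcal{N}$. By Lemma \ref{lm3.1}, $u^0$ is then a ground state solution of \eqref{eqs1.1.0}, which contradicts the assumption. Therefore $c\ge c_{\infty}$, and combining with the first step gives $c=c_{\infty}$.

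There is no real obstacle here, since all the analytic work (the Brézis–Lieb splitting, the profile decomposition, and the energy comparison) is already contained in the cited lemmas. The only point to check is that the hypotheses of both lemmas are just $(A)$ and $(V)$, which are exactly those of the corollary, so the argument is a direct two-line combination.
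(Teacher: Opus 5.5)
Your proof is correct and matches the paper's argument: the paper states the corollary as an immediate consequence of Lemma \ref{lm3.3} ($c\le c_{\infty}$) and Lemma \ref{lm3.6} (if $c<c_{\infty}$ then a ground state solution exists), combined by contraposition. Your appeal to Lemma \ref{lm3.1} is harmless but redundant, since the limit $u^0$ in Lemma \ref{lm3.6} is already a nonzero critical point of $I$ with $I(u^0)=c$.
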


Next, when equation \eqref{eqs1.1.0} has no ground state solution, we are going to prove that equation \eqref{eqs1.1.0} has a high energy solution. By Lemma \ref{lm4.2} and Theorem \ref{th1.0}, we derive the following compactness result.
\begin{lem}\label{lm4.3}
       Suppose that $(A)$, $(V)$ hold and equation \eqref{eqs1.1.0} has no ground state solution. If $I$ has a $(PS)_{d}$ sequence with $d\in (c_{\infty},c_{\infty}+\min\{c_{\infty},\tau\})$, then equation \eqref{eqs1.1.0} has a high energy solution $u_0$ with  $I(u_0)=d$.
  \end{lem}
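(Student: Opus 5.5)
Let $\{u_n\}$ be a $(PS)_d$ sequence of $I$ with $d\in (c_{\infty},c_{\infty}+\min\{c_{\infty},\tau\})$. The plan is to apply the global compactness Lemma \ref{lm4.2} to $\{u_n\}$ and show that no bubbles can occur. After passing to a subsequence, Lemma \ref{lm4.2} gives a solution $u_0$ of \eqref{eqs1.1.0} and nontrivial solutions $v_1,\dots,v_k$ of \eqref{1.8.2} with
\begin{align*}
d=I(u_0)+\sum_{j=1}^k I_{\infty}(v_j).
\end{align*}
Two facts control the energies. First, every nontrivial solution $v_j$ of \eqref{1.8.2} lies in $\mathcal{N}_{\infty}$, so $I_{\infty}(v_j)\ge c_{\infty}$. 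Second, every solution of \eqref{eqs1.1.0} satisfies $I(u_0)=\frac14\|u_0\|_A^2\ge 0$. If $u_0\neq 0$, then $u_0\in\mathcal{N}$, so $I(u_0)\ge c$. By Corollary \ref{co1}, $c=c_{\infty}$, since \eqref{eqs1.1.0} has no ground state solution. Moreover $I(u_0)\neq c_{\infty}$, because otherwise $u_0$ would be a ground state. Hence $u_0\neq 0$ forces $I(u_0)>c_{\infty}$.

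The argument then splits into cases on $k$ and on whether $u_0$ vanishes.

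\emph{Case $k\ge 2$.} Then $d\ge 2c_{\infty}\ge c_{\infty}+\min\{c_{\infty},\tau\}$, which is impossible.

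\emph{Case $k=1$ and $u_0\neq0$.} Then $d> c_{\infty}+c_{\infty}\ge c_{\infty}+\min\{c_\infty,\tau\}$, which is impossible.

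\emph{Case $k=1$ and $u_0=0$.} Then $d=I_{\infty}(v_1)\in(c_{\infty},c_{\infty}+\tau)$. This is the main obstacle, and it is exactly where Theorem \ref{th1.0} is used. Since $v_1$ is a nontrivial solution of \eqref{1.8.2} with $I_{\infty}(v_1)\le c_{\infty}+\tau$, Theorem \ref{th1.0} gives $v_1=e^{i\theta}u_{\infty}(\cdot-x_0)$. Then $I_{\infty}(v_1)=I_{\infty}(u_{\infty})=c_{\infty}$, contradicting $d>c_{\infty}$.

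\emph{Case $k=0$ and $u_0=0$.} Then $d=0$, which contradicts $d>c_\infty>0$.

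The only remaining possibility is $k=0$ and $u_0\neq 0$. In that case item $(ii)$ of Lemma \ref{lm4.2} gives $u_n\to u_0$ strongly in $H_A^1(\R^3,\C)$, and item $(iii)$ gives $I(u_0)=d$. So $u_0$ is a nontrivial solution with $I(u_0)=d>c_{\infty}=c$, which is a high energy solution.

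The only delicate points are that $I$ is nonnegative on critical points and that the strict inequality $I(u_0)>c$ holds for nontrivial $u_0$. Both follow from the Nehari identity $I(u)-\frac14\langle I'(u),u\rangle=\frac14\|u\|_A^2$ together with Corollary \ref{co1} and the assumption that no ground state exists.
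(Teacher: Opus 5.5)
Your proof is correct and follows the paper's argument essentially step for step: you apply Lemma~\ref{lm4.2}, exclude $k\ge 2$ and the case $k=1$, $u_0\neq 0$ by energy counting with Corollary~\ref{co1}, exclude the case $k=1$, $u_0=0$ via Theorem~\ref{th1.0}, and conclude $k=0$ with $I(u_0)=d$. Your explicit treatment of $k=0$, $u_0=0$ is a small addition the paper leaves implicit, and the strict inequality $I(u_0)>c_{\infty}$ is not actually needed, since $I(u_0)\ge c=c_{\infty}$ already gives $d\ge 2c_{\infty}$.
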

  \noindent{\bf Proof.}
We deduce from Lemma \ref{lm4.2} that there exist a solution $u_0$ of equation \eqref{eqs1.1.0} and \(k\) nontrivial solutions \(v_1, v_2, \dots, v_k \in H^1(\mathbb{R}^3, \mathbb{C})\) of equation \eqref{1.8.2} such that
\[
d = I(u_0) + \sum_{j=1}^k I_{\infty}(v_j).
\]
Due to $d\in (c_{\infty},c_{\infty}+\min\{c_{\infty},\tau\})$, we obtain $k=0$ or $k=1$. If $k=1$ and $u_0\neq 0$, by Corollary \ref{co1}, we get
\begin{align*}
        c_{\infty}+\min\{c_{\infty},\tau\}>d=I(u_0)+I_0(v_1)\geq c+c_{\infty}=2c_{\infty},
\end{align*}
which is a contradiction.
If $k=1$ and $u_0\equiv 0$, then
\begin{align*}
    I_{\infty}(v_1)=d< c_{\infty}+\tau.
\end{align*}
According to Theorem \ref{th1.0}, we know that $v_1$ is a ground state solution of equation \eqref{1.8.2} and hence $I_{\infty}(v_1)=c_{\infty}$, 
% \begin{align*}
%         v_n^{(1)}(x)=u_n^{(1)}(x+z_n^1)=(u_n)_+(x+z_n^1)\geq 0
% \end{align*}
% and $v_n^{(1)}\rightharpoonup v_1$ in $H^1(\mathbb{R}^3)$, which implies $v_1\geq 0$. Since $v_1$ is a nontrivial solution of equation \eqref{1.5}, using the regularity of solution for equation \eqref{1.5} (see \cite[Proposition 4.1]{MV}), we obtain $v_1\in C^2(\mathbb{R}^3)$. From the strong maximum principle (see \cite{E} Chapter 6, Theorem 4), we have $v_1>0$. According to Theorem A, $v_1$ is a positive ground state solution of equation \eqref{1.5}, hence $I_0(v_1)=c_{\infty}$. Applying Lemma \ref{lm4.2}, one has $c_{\infty}<d=I_0(v_1)=c_{\infty}$, 
which is also a contradiction.

Thus, $k=0$, which yields that $I(u_0)=d$ and $u_0$ is a high energy solution of equation \eqref{eqs1.1.0}.
\qed

\vskip 0.08truein

From Lemma \ref{lm4.3}, to prove the existence of a high energy solution for equation \eqref{eqs1.1.0}, it suffices to construct a $(PS)_{d}$ sequence of $I$ with $d\in (c_{\infty},c_{\infty}+\min\{c_{\infty},\tau\})$.  Inspired by \cite{BL}, we define a barycenter function $m: H_A^1(\mathbb{R}^3,\mathbb{C})\setminus\{0\}\to \mathbb{R}^3$ by
 \begin{align*}
        m(u):=\frac{1}{\|u\|^2_{L^{2}(\mathbb{R} ^3)}}\int_{\mathbb{R}^3}\frac{x}{|x|}|u(x)|^2dx.
       % =&\left(\int_{\mathbb{R}^3}u^2(x)\chi(|x|)x_1 dx, \int_{\mathbb{R}^3}u^2(x)\chi(|x|)x_2 dx, \int_{\mathbb{R}^3}u^2(x)\chi(|x|)x_3 dx\right),
 \end{align*}
 %where
% \begin{eqnarray}
 %       \chi (t)=\left\{\begin{array}{ll}
  %      1,& 0\leq t\leq 1;\\
 %       \frac{1}{t}, & t>1.\nonumber\\
  %      \end{array} \right.	
%\end{eqnarray}
Obviously, $m$ is continuous with respect to $u$ from $H_A^1(\mathbb{R}^3,\mathbb{C})$ to $\mathbb{R}^3$.
Denote a subset of Nehari manifold $\mathcal{N}$ by
\begin{align*}
        \mathcal{N}_0:=\{u\in \mathcal{N}\ |\ m(u)=0\}.
\end{align*}
Note that $\mathcal{N}_0\neq \varnothing$. In fact, if $v\in H^1_A(\mathbb{R}^3,\mathbb{C})\setminus \{0\}$ is a radial function, we deduce from Lemma \ref{lm2.1} that there exists $t>0$ such that $tv\in \mathcal{N}$. Recall that $v$ is a radial function, then $m(v)=0$.
% $v^2(x)\chi(|x|)x_i$ is odd in $x_i$ for $i=1,2,3$, which implies
%\begin{align*}
%        \int_{\mathbb{R}^3}v^2(x)\chi(|x|)x_i dx=0,\ i=1,2,3,
%\end{align*}
%that is, $m(v)=0$.
Thus, $m(tv)=m(v)=0$ and hence $tv\in \mathcal{N}_0$. So, we can define
\begin{equation*}
        c_0:= \inf \big\{I(u)\ |\ u\in \mathcal{N}_0 \big\}.
\end{equation*}

We can get the following relationship between $c$ and $c_0$.
\begin{lem}\label{lm4.4}
       Suppose that $(A)$, $(V)$ hold and equation \eqref{eqs1.1.0} has no ground state solution. Then $c<c_0$.
\end{lem}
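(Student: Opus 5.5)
We argue by contradiction. Since $\mathcal{N}_0\subset\mathcal{N}$, we always have $c\le c_0$, so we suppose $c_0=c$. By Corollary \ref{co1}, the absence of a ground state gives $c=c_\infty$, hence $c_0=c_\infty$.

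\textbf{Step 1: a minimizing sequence.} Take a minimizing sequence $\{u_n\}\subset\mathcal{N}_0$ with $I(u_n)\to c_0=c$. Then $\{u_n\}$ is also a minimizing sequence for $c$ on $\mathcal{N}$. Arguing as in Lemma \ref{lm3.6}, Ekeland's variational principle on $\mathcal{N}$ gives $\{w_n\}\subset\mathcal{N}$ with $\|w_n-u_n\|_A\to0$ and $\{w_n\}$ a bounded $(PS)_c$ sequence of $I$. Since $m$ is continuous and $\|u_n\|_{L^2}$ stays bounded away from zero (because $\|u\|_A\ge\rho$ on $\mathcal{N}$), we get $m(w_n)-m(u_n)\to0$. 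Hence $m(w_n)\to0$.

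\textbf{Step 2: the profile is a single escaping bump.} Apply Lemma \ref{lm4.2} to $\{w_n\}$:
\[
c=c_\infty=I(u_0)+\sum_{j=1}^k I_\infty(v_j).
\]
We rule out the alternatives one at a time.
\begin{itemize}
\item If $u_0\ne0$, then $I(u_0)\ge c$. When $k\ge1$ this forces $c\ge c+c_\infty$, which is impossible. When $k=0$, $u_0$ attains $c$, contradicting the absence of a ground state.
\item Hence $u_0=0$. Since $c_\infty>0$ we need $k\ge1$, and $k\ge2$ would give total energy at least $2c_\infty$. So $k=1$ and $I_\infty(v_1)=c_\infty$.
\item Therefore $v_1$ is a ground state of \eqref{1.8.2}. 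By Lemma \ref{lem:1}, after absorbing a translation into $z_n$, we have $v_1=e^{i\theta}u_\infty$.
\end{itemize}
Consequently $w_n-e^{i\theta}u_\infty(\cdot-z_n)\to0$ in $H_A^1$, with $|z_n|\to\infty$.

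\textbf{Step 3: the barycenter contradiction.} This is the main computation. Write
\[
\int\frac{x}{|x|}|w_n|^2\,dx=\int\frac{x}{|x|}u_\infty^2(x-z_n)\,dx+o_n(1),
\]
which holds by the strong $L^2$ convergence and the bound $|x/|x||\le1$. Substituting $x=y+z_n$, the main term becomes
\[
\int\frac{y+z_n}{|y+z_n|}u_\infty^2(y)\,dy.
\]
Pass to a subsequence with $z_n/|z_n|\to e\in S^2$. For each fixed $y$, $(y+z_n)/|y+z_n|\to e$, and the integrand is dominated by $u_\infty^2$. By dominated convergence,
\[
m(w_n)\to\frac{\|u_\infty\|_{L^2}^2\,e}{\|u_\infty\|_{L^2}^2}=e,
\]
so $|m(w_n)|\to1$. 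This contradicts $m(w_n)\to0$, and therefore $c<c_0$.

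The main obstacle is the Ekeland step. Because the constraint $m=0$ is not a natural one, we cannot apply Ekeland directly on $\mathcal{N}_0$. Instead we use that $\{u_n\}$ is already minimizing for $c$ on the full Nehari manifold $\mathcal{N}$, and that the Ekeland perturbation is small in $H_A^1$, so the barycenter condition carries over to $\{w_n\}$ in the limit.
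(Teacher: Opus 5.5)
Your proof is correct and is essentially the paper's own argument. The shared steps are: Ekeland's principle on the whole Nehari manifold $\mathcal{N}$ (a minimizing sequence in $\mathcal{N}_0$ also minimizes $c$ when $c_0=c$); the global compactness lemma, which reduces the Palais--Smale sequence to a single bump escaping to infinity; and the barycenter of that bump tending to a unit vector. The only differences are that you use dominated convergence instead of the paper's mean-value estimate, and you transfer $m\approx 0$ to the Ekeland sequence rather than transferring the profile back to the $\mathcal{N}_0$-sequence.

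One small repair is needed: $\|u\|_A\ge\rho$ alone does not bound $\|u\|_{L^2}$ from below. You need interpolation: on $\mathcal{N}$,
\[
\|u\|_A^2=\int_{\mathbb{R}^3}\Phi_{|u|}|u|^2\,dx\le C\|u\|_{L^{12/5}}^4\le C\|u\|_{L^2}^3\|u\|_A,
\]
hence $\|u\|_{L^2}^3\ge\rho/C$. Alternatively, as the paper effectively does, use $\|w_n\|_{L^2}\to\|u_\infty\|_{L^2}>0$ once Step 2 is established.
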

\begin{proof}
     %refer to wei yuanhong Lemma 5.4
Due to $\mathcal{N}_0\subset \mathcal{N}$, we get $c\leq c_0$. In order to prove $c<c_0$, we argue by contradiction and assume $c=c_0$. Then, $c_0=c=c_{\infty}$. Suppose $\{w_n\}\subset \mathcal{N}_0$ is a minimizing sequence of $c_0$. 
By the Ekeland Variational Principle \cite[Theorem 8.5]{W}, there exist $\{\lambda_n\}\subset\R $ and $\{u_n\}\subset\mathcal{N}$ such that 
 \begin{align}\label{3.24}
     I(u_n)\to c,\ \|I'(u_n)-\lambda_n J'(u_n)\|_*\to 0 \ \text{and}\ \|u_n-w_n\|_A\to 0 \ \text{as}\ n\to +\infty.
 \end{align}
 Following a similar argument as in Lemma \ref{lm3.6}, we derive that $\{u_n\}$ is a $(PS)_{c}$ sequence of $I$.
%So, %$\{|u_n|\}$ is also a minimizing sequence of $c$, and then 
 %same as discussion in the Lemma \ref{lm3.6},  we know that there exists $\{u_n\}\subset \mathcal{N}$ such that $\{u_n\}$ is a $(PS)_{c}$ sequence of $I$ on $H_A^1(\mathbb{R}^3,\mathbb{C})$.
 Since equation \eqref{eqs1.1.0} has no ground state solution, it follows from Lemma \ref{lm4.2} that there exists a solution $v_1$ of equation \eqref{1.8.2} and a sequence $\{y_n\}\subset\R^3$ with $|y_n|\to +\infty$ such that 
\begin{align}\label{3.16}
    c=c_{\infty}=I_{\infty}(v_1) \ \ \text{and}\ \ u_n-v_1(\cdot-y_n)\to 0\ \text{in} \ H_A^1(\mathbb{R}^3,\mathbb{C}) \text { as } n \to +\infty.
\end{align}
Combining \eqref{3.24} and \eqref{3.16}, we get  
\begin{align}\label{3.26}
    w_n-v_1(\cdot-y_n)\to 0\ \text{in} \ H_A^1(\mathbb{R}^3,\mathbb{C}) \text { as } n \to +\infty.
\end{align}
%Hence, from Theorem \ref{thm A} we obtain that $v_1$ is radial, then $m(v_1)=0$ 
% So, combining the continuty of $m$ and $m(u_n)=0$, one has 
% \begin{align*}
%     |m(v_1(x-y_n))|<\frac{1}{2}.
% \end{align*}
%By the continuty of $m$ and $m(u_n)=0$ we have $|m(v_1(x-y_n))|<\frac{1}{2}$

On the one hand, for $R>0$, by the mean value theorem, there exists $\theta\in(0,1)$ such that the following inequality holds
\begin{align*}
   &\left |m(v_1(x-y_n))-\frac{y_n}{|y_n|}\right|
    =\frac{1}{\|v_1\|_{L^2(\mathbb{R}^3)}^2} \left|\int_{\R^3}\left(\frac{x+y_n}{|x+y_n|}-\frac{y_n}{|y_n|}\right)|v_1|^2dx\right|\nonumber\\
    =&\frac{1}{\|v_1\|_{L^2(\mathbb{R}^3)}^2} \left|\int_{\R^3\setminus{B_R}}\left(\frac{x+y_n}{|x+y_n|}-\frac{y_n}{|y_n|}\right)|v_1|^2dx+\int_{B_R}\left(\frac{x+y_n}{|x+y_n|}-\frac{y_n}{|y_n|}\right)|v_1|^2dx\right|\nonumber\\
    \leq &C\int_{\R^3\setminus{B_R}}|v_1|^2dx+C\int_{B_R}\frac{|x|}{|\theta x+y_n|}|v_1|^2dx\nonumber\\
    \leq &C\int_{\R^3\setminus{B_R}}|v_1|^2dx+C\frac{R}{|y_n|-R}\int_{\R^3}|v_1|^2dx,
\end{align*}
which yields 
\begin{align}\label{3.17}
    \left |m(v_1(x-y_n))-\frac{y_n}{|y_n|}\right|=o_n(1).
\end{align}
On the other hand, we deduce from \eqref{3.26} that 
\begin{align*}
    &|m(v_1(x-y_n))|=|m(w_n)-m(v_1(x-y_n))|\\
    =&\left|\frac{1}{\|w_n\|^2_{L^{2}(\mathbb{R} ^3)}}\int_{\mathbb{R}^3}\frac{x}{|x|}|w_n(x)|^2dx -\frac{1}{\|v_1\|^2_{L^{2}(\mathbb{R} ^3)}}\int_{\mathbb{R}^3}\frac{x}{|x|}|v_1(x-y_n)|^2dx\right|\\
    \leq &\left|\left(\frac{1}{\|w_n\|^2_{L^{2}(\mathbb{R} ^3)}}-\frac{1}{\|v_1\|^2_{L^{2}(\mathbb{R} ^3)}}\right)\int_{\mathbb{R}^3}\frac{x}{|x|}|w_n(x)|^2dx\right|
    +\frac{1}{\|v_1\|^2_{L^{2}(\mathbb{R} ^3)}}\left|\int_{\mathbb{R}^3}\frac{x}{|x|}(|w_n(x)|^2-|v_1(x-y_n)|^2)dx\right|\\
    \leq &\left|1-\frac{\|w_n\|^2_{L^{2}(\mathbb{R} ^3)}}{\|v_1\|^2_{L^{2}(\mathbb{R} ^3)}}\right| +C\int_{\mathbb{R}^3}\left||w_n(x)|^2-|v_1(x-y_n)|^2\right|dx
    =o_n(1),
\end{align*}
which contradicts \eqref{3.17}. Thus, $c<c_0$.
\end{proof}

Next, in order to construct a min-max value belonging to $(c_{\infty},c_{\infty}+\min\{c_{\infty},\tau\})$, we define a map $f: \mathbb{R}^3\to \mathcal{N}$ by
\begin{align*}
        f(z):=\frac{\left[\int_{\R^3}\left[|\nabla u_{\infty}|^2+\left(|A(x+z)|^2+V(x+z)\right)u_{\infty}^2\right]dx\right]^{\frac{1}{2}}}{\|u_{\infty}\|}u_{\infty}(x-z),
\end{align*}
where $u_{\infty}$ is given by \eqref{1.8.3}. A direct calculation yields that $f\in C(\mathbb{R}^3, \mathcal{N})$.

We have the following properties related to $f$.
\begin{lem}\label{lm4.5}
        Suppose that $(A)$, $(V)$ hold and equation \eqref{eqs1.1.0} has no ground state solution. Then there exists $R_0>0$ such that $I(f(z))<\frac{c+c_0}{2}$ for $|z|\geq R_0$ and $\deg(m\circ f,B_{R_0},0)=1$.
\end{lem}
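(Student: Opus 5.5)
The plan has two parts: an energy estimate for $f(z)$ at large $|z|$, and a degree computation based on the fact that $m(f(z))$ points almost in the direction of $z$.

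\textbf{Energy estimate.} Since $f(z)\in\mathcal{N}$ is real-valued, we have $I(f(z))=\frac14\|f(z)\|_A^2$. Because $u_\infty$ is real, $|\nabla_A u|^2=|\nabla u|^2+|A|^2u^2$, so
\[
\|f(z)\|_A^2=\frac{\big(\int_{\R^3}[|\nabla u_\infty|^2+(|A(x+z)|^2+V(x+z))u_\infty^2]dx\big)^2}{\|u_\infty\|^2}.
\]
Write $a(z)$ for the integral in the numerator. Exactly as in the proof of Lemma \ref{lm3.3}, split $\R^3$ into $B_R$ and $\R^3\setminus B_R$; using $(A)$, $(V)$ and the boundedness of $A,V$, this gives $a(z)\to\|u_\infty\|^2$ as $|z|\to\infty$. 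Hence $I(f(z))\to\frac14\|u_\infty\|^2=c_\infty$. By Corollary \ref{co1} we have $c=c_\infty$, and by Lemma \ref{lm4.4} we have $c<c_0$, so $c_\infty<\frac{c+c_0}{2}$. Therefore there is $R_1$ with $I(f(z))<\frac{c+c_0}{2}$ for all $|z|\ge R_1$.

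\textbf{Barycenter estimate.} Since $f(z)$ is a positive multiple of $u_\infty(\cdot-z)$ and $m$ is invariant under positive scaling, $m(f(z))=m(u_\infty(\cdot-z))$. The estimate leading to \eqref{3.17} in the proof of Lemma \ref{lm4.4} applies with $v_1=u_\infty$ and $y_n$ replaced by $z$. It gives
\[
\Big|m(f(z))-\frac{z}{|z|}\Big|\le C\int_{\R^3\setminus B_R}u_\infty^2dx+C\frac{R}{|z|-R}\|u_\infty\|_{L^2}^2 .
\]
This is uniform in the direction of $z$. Choosing first $R$ large and then $|z|$ large, we obtain $R_2$ such that $|m(f(z))-z/|z||<\frac12$ for $|z|\ge R_2$.

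\textbf{Degree computation.} Set $R_0=\max\{R_1,R_2\}$. The map $m\circ f$ is continuous on $\overline{B_{R_0}}$ by continuity of $f$ and $m$. On $\partial B_{R_0}$ consider the homotopy
\[
H(z,s)=s\,m(f(z))+(1-s)\frac{z}{R_0}.
\]
For $|z|=R_0$,
\[
|H(z,s)|\ge 1-s\Big|m(f(z))-\frac{z}{|z|}\Big|>\frac12,
\]
so $H$ never vanishes on the boundary. By homotopy invariance,
\[
\deg(m\circ f,B_{R_0},0)=\deg(\mathrm{id}/R_0,B_{R_0},0)=1.
\]

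\textbf{Main obstacle.} The delicate point is uniformity in $z$: both limits must hold uniformly over all directions, not merely along a sequence. This is ensured because the smallness of $|A(x+z)|^2+V(x+z)-V_\infty$ on $B_R$ depends only on $|z|$, via $\sup_{|y|\ge|z|-R}$ of that quantity. The remaining issue is continuity of $f$, which the paper asserts; it follows from dominated convergence applied to $a(z)$ together with continuity of translations in $H^1$.
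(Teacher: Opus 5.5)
Your proposal is correct and follows essentially the same route as the paper. You get $I(f(z))\to c_\infty=c<\frac{c+c_0}{2}$ via the argument of \eqref{3.3} together with Corollary \ref{co1} and Lemma \ref{lm4.4}, you get $m(f(z))-z/|z|\to 0$ via the estimate \eqref{3.17}, and you finish with a linear homotopy to the identity on $\partial B_{R_0}$. The only difference is that you normalize the homotopy by $z/R_0$ where the paper uses $x$ itself, which is cosmetic.
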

\noindent{\bf Proof.}
Similar to \eqref{3.3}, one has
\begin{align*}
        \lim_{|z|\to +\infty} I(f(z))=\frac{\|u_{\infty}\|^2}{4}=c_{\infty}=c.
\end{align*}
Therefore, there exists $R_1>0$ such that $I(f(z))<\frac{c+c_0}{2}$ for $|z|\geq R_1$.

Similar to \eqref{3.17}, we have%refer to Li yanyan,
\begin{align*}
\left|m(f(z))-\frac{z}{|z|}\right|
=\left|m(u_{\infty}(x-z))-\frac{z}{|z|}\right|\to 0\ \ \text{as}\ \ |z|\to +\infty.
    %\lim_{|z|\to +\infty}m(f(z))=\frac{z}{|z|}.
\end{align*}
So, there exists $R_0>\max\{1,R_1\}$ such that for $|z|\ge R_0$, one has $|m(f(z))-\frac{z}{|z|}|<\frac{1}{2}$.
Define a continuous mapping $H:[0,1]\times \overline{B_{R_0}}\rightarrow\R^3$ by $H(t,x):=tm(f(x))+(1-t)x$, then $H(t,x)\in B(\frac{tx}{|x|}+(1-t)x,\frac{t}{2})$. For $x\in \partial B_{R_0}$ and $t\in [0,1]$, we have 
% \begin{align*}
 %    &\left|H(t,x)-\frac{tx}{|x|}-(1-t)x\right|\le \frac{1}{2},
% \end{align*}
 \begin{align*}
     |H(t,x)|\ge 
    \left|\frac{tx}{|x|}+(1-t)x\right|-\frac{t}{2}
     =\frac{t}{2}+(1-t)R_0
     \neq 0,
 \end{align*}
  that is, $0\notin H(t,\partial B_{R_0})$ for $t\in [0,1]$.
 % \begin{align*}
 %     &|H(t,x)|=|tm(f(x))+(1-t)x|=\left|t\left[m(f(x))-\frac{x}{|x|}\right]+\frac{x}{|x|}\right|\\
 %     \ge& \left|1-\left|t\left[m(f(x))-\frac{x}{|x|}\right]\right|\right|\ge \frac{1}{2},
 % \end{align*}
 So, by the Brouwer degree theory, we obtain 
 \[
 \deg(m\circ f,B_{R_0}, 0)=\deg(H(1,\cdot),B_{R_0}, 0)=\deg(H(0,\cdot),B_{R_0}, 0)= \deg(Id,B_{R_0}, 0)=1,
 \]
where $Id$ is the identity mapping.
% Analogous to \eqref{4.20}, we get
% \begin{align*}
%         (m(f(z)),z)=&\frac{\|\bar{u}_{\infty}\|^2-\int_{\R^3}\Phi_0 \bar{u}_{\infty}^2(x-z) dx}{\|\bar{u}_{\infty}\|^2}\int _{\mathbb{R} ^3}\bar{u}_{\infty}^2(x-z)\chi (|x|)(x,z) dx\\
%         \geq &C\left((|z|^2-r^2)^{\frac{1}{2}}-\frac{1}{|z|}\right).
% \end{align*}
% Consequently, there exists $R_0\geq R_1$ such that $(m(f(z)),z)>0$ for $|z|=R_0$, which together with the properties of Brouwer degree, implies $\deg(m\circ f,B_{R_0},0)=1$.
\qed

Now, we are ready to construct a min-max value belonging to $(c_{\infty},c_{\infty}+\min\{c_{\infty},\tau\})$. 
Define
\begin{align}\label{3.29}
        \bar{c} :=\inf_{\gamma \in\Gamma} \max_{z\in \overline{B_{R_0}}}I(\gamma(z)),
\end{align}
where $\Gamma :=\{\gamma \in C(\overline{B_{R_0}}, \mathcal{N})\ | \ \gamma|_{\partial B_{R_0}}=f|_{\partial B_{R_0}}\}$.
%\begin{align*}
%     \Gamma :=\{\gamma \in C(\overline{B_{R_0}}, \mathcal{N})\ | \ \gamma|_{\partial B_{R_0}}=f|_{\partial B_{R_0}}\}.
%\end{align*}

\begin{lem}\label{lm4.6}
       Suppose that $(A)$, $(V)$, $(H_1)$ hold and equation \eqref{eqs1.1.0} has no ground state solution. Then $\bar{c}\in (c_{\infty},c_{\infty}+\min\{c_{\infty},\tau\})$.
\end{lem}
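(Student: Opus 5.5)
The proof splits into a lower bound $\bar c>c_\infty$, which is topological, and an upper bound $\bar c<c_\infty+\min\{c_\infty,\tau\}$, which uses the explicit path $f$ together with $(H_1)$. Throughout, $c=c_\infty$ by Corollary \ref{co1}, since equation \eqref{eqs1.1.0} has no ground state solution.

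\textbf{Lower bound.} Fix any $\gamma\in\Gamma$. On $\partial B_{R_0}$ we have $\gamma=f$, so $m\circ\gamma=m\circ f$ there. For $|z|=R_0$ Lemma \ref{lm4.5} gives $|m(f(z))-\frac{z}{|z|}|<\frac12$, so $m\circ f$ does not vanish on $\partial B_{R_0}$. Because the Brouwer degree depends only on boundary values, $\deg(m\circ\gamma,B_{R_0},0)=\deg(m\circ f,B_{R_0},0)=1$. Hence there is $z_\gamma\in B_{R_0}$ with $m(\gamma(z_\gamma))=0$, that is, $\gamma(z_\gamma)\in\mathcal N_0$. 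This gives $\max_{\overline{B_{R_0}}}I\circ\gamma\ge c_0$ for every $\gamma$, so $\bar c\ge c_0$. Lemma \ref{lm4.4} then yields $\bar c\ge c_0>c=c_\infty$.

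\textbf{Upper bound.} Since $f\in\Gamma$, it suffices to show $\max_{z}I(f(z))<c_\infty+\min\{c_\infty,\tau\}$. First compute $I(f(z))$. Put $a(z):=\int_{\R^3}\big[|\nabla u_\infty|^2+(|A(x+z)|^2+V(x+z))u_\infty^2\big]dx$. Because $u_\infty$ is real, $|\nabla_A u_\infty(\cdot-z)|^2=|\nabla u_\infty(\cdot-z)|^2+|A|^2u_\infty^2(\cdot-z)$, so $\|u_\infty(\cdot-z)\|_A^2=a(z)$. Using $\|u_\infty\|^2=\int\Phi_{u_\infty}u_\infty^2=4c_\infty$ and the Nehari identity $I=\frac14\|\cdot\|_A^2$, we get
\begin{align*}
I(f(z))=\frac{a(z)^2}{4\|u_\infty\|^2}=\frac{a(z)^2}{16c_\infty}.
\end{align*}

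Next bound $a(z)$ with $(H_1)$, writing $\delta:=\min\{c_\infty,\tau\}$:
\begin{align*}
a(z)\le\|u_\infty\|^2+4\big(\sqrt{c_\infty(c_\infty+\delta)}-c_\infty\big)=4\sqrt{c_\infty(c_\infty+\delta)}.
\end{align*}
Therefore $I(f(z))\le\frac{16c_\infty(c_\infty+\delta)}{16c_\infty}=c_\infty+\delta$.

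\textbf{Main obstacle.} This only gives a non-strict inequality, while the lemma needs $\bar c<c_\infty+\delta$. I would argue as follows. The function $z\mapsto I(f(z))$ is continuous on the compact set $\overline{B_{R_0}}$, so its maximum is attained at some $z^*$. Equality $I(f(z^*))=c_\infty+\delta$ would force equality in $(H_1)$ almost everywhere on $\operatorname{supp}u_\infty=\R^3$, i.e.\ $|A|^2+V$ identically equal to $V_\infty+4\|u_\infty\|_{L^2}^{-2}(\sqrt{c_\infty(c_\infty+\delta)}-c_\infty)>V_\infty$. This contradicts $|A(x)|^2+V(x)\to V_\infty$ as $|x|\to\infty$, which follows from $(A)$ and $(V)$. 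Hence the maximum is strictly below $c_\infty+\delta$, and so $\bar c\le\max I\circ f<c_\infty+\min\{c_\infty,\tau\}$.
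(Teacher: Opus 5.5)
Your proof is correct and follows the same route as the paper: the degree argument gives $\bar c\ge c_0>c=c_\infty$, and the path $f\in\Gamma$ together with $(H_1)$ gives the upper bound via $I(f(z))=a(z)^2/(16c_\infty)$. Your strictness step is valid and fills a gap the paper glosses over, since the paper writes the strict inequality directly after invoking the non-strict $(H_1)$. Because $u_\infty>0$ and $|A|^2+V\to V_\infty$ lies strictly below the $(H_1)$ bound, each $I(f(z))$ is strictly smaller, and compactness of $\overline{B_{R_0}}$ passes this to the maximum.
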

\noindent{\bf Proof.}
For any $\gamma \in\Gamma$, since $\gamma|_{\partial B_{R_0}}=f|_{\partial B_{R_0}}$, we deduce from the properties of Brouwer degree and Lemma \ref{lm4.5} that
\begin{align*}
        \deg(m\circ \gamma,B_{R_0},0)=\deg(m\circ f,B_{R_0},0)=1.
\end{align*}
So, there exists $z_0\in B_{R_0}$ such that $m(\gamma(z_0))=0$. 
Then
\begin{align*}
    c_0\le I(\gamma(z_0))\le \max_{z\in \overline{B_{R_0}}}I(\gamma(z)),
\end{align*}
which implies 
\begin{align}
    \label{c}
    c_{\infty}=c<c_0\le \bar{c}.
\end{align}
Moreover, since $(H_1)$ holds, we have
\begin{align*}
    I(f(z))=&\frac{1}{4}\frac{\left[\int_{\R^3}\left[|\nabla u_{\infty}|^2+\left(|A(x+z)|^2+V(x+z)\right)u_{\infty}^2\right]dx\right]^2}{\|u_{\infty}\|^2}\\
    < &\frac{1}{4\|u_{\infty}\|^{2}}\left[\int_{\R^3}\left[|\nabla u_{\infty}|^2+\left[V_{\infty}+4\|u_{\infty}\|_{L^2(\R^3)}^{-2}\left(\sqrt{c_{\infty}(c_{\infty}+\min\{c_{\infty},\tau\})}-c_{\infty}\right)\right]u_{\infty}^2\right]dx\right]^2\\
    %< &\frac{1}{4}\left[\int_{\R^3}\left[|\nabla u_{\infty}|^2+\left(\frac{c_{\infty}+\min\{c_{\infty},\tau\}}{c_{\infty}}\right)^{\frac{1}{2}}V_{\infty}u_{\infty}^2\right]dx\right]^2\|u_{\infty}\|^{-2}\\
    =&\frac{1}{4\|u_{\infty}\|^{2}}\left[\|u_{\infty}\|^{2}+4\left(\sqrt{c_{\infty}(c_{\infty}+\min\{c_{\infty},\tau\})}-c_{\infty}\right)\right]^2\\
    =&\frac{1}{16c_{\infty}}\left(4c_{\infty}+4\left(\sqrt{c_{\infty}(c_{\infty}+\min\{c_{\infty},\tau\})}-c_{\infty}\right)\right)^2\\
    =&c_{\infty}+\min\{c_{\infty},\tau\}.
    %= &\frac{1}{4}\frac{c_{\infty}+\min\{c_{\infty},\tau\}}{c_{\infty}}\|u_{\infty}\|^2=c_{\infty}+\min\{c_{\infty},\tau\}.
\end{align*}
Thus, $\bar{c}< c_{\infty}+\min\{c_{\infty},\tau\}$. The proof is completed.
\qed

\begin{lem}\label{lm3.12}
       Suppose that $(A)$, $(V)$, $(H_1)$ hold and equation \eqref{eqs1.1.0} has no ground state solution. Then equation \eqref{eqs1.1.0} has a high energy solution $u$ with $I(u)=\bar{c}$.
\end{lem}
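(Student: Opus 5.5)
By Lemma \ref{lm4.3}, it suffices to produce a $(PS)_{\bar c}$ sequence for $I$. Lemma \ref{lm4.6} already gives $\bar c\in (c_{\infty},c_{\infty}+\min\{c_{\infty},\tau\})$. So the plan is a minimax argument for the class $\Gamma$ on the Nehari manifold $\mathcal{N}$, followed by the removal of the Lagrange multiplier exactly as in Lemma \ref{lm3.6}.

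\textbf{Step 1: the boundary data lie strictly below $\bar c$.} For $z\in\partial B_{R_0}$, Lemma \ref{lm4.5} gives $I(f(z))<\frac{c+c_0}{2}$. By Lemma \ref{lm4.4} we have $\frac{c+c_0}{2}<c_0\le \bar c$, where the last inequality is \eqref{c}. Hence
\[
\max_{\partial B_{R_0}} I\circ f<\bar c .
\]
This strict gap is what makes the general minimax principle (e.g.\ \cite[Theorem 2.8]{W}, or a deformation argument on the $C^1$ manifold $\mathcal{N}$) applicable. Any deformation that decreases $I$ only on the set $\{I\ge \bar c-\varepsilon\}$, with $\varepsilon$ small, leaves the boundary values of $\gamma$ untouched, so $\Gamma$ is invariant under it.

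\textbf{Step 2: a $(PS)_{\bar c}$ sequence for $I|_{\mathcal N}$.} I will work with $I|_{\mathcal N}$. Note that $\mathcal N$ is a $C^1$ manifold, since $\langle J'(u),u\rangle=-2\|u\|_A^2\neq0$ on $\mathcal N$, and it is bounded away from $0$ by Lemma \ref{lm3.1}. The quantitative deformation lemma on $\mathcal N$ then yields $u_n\in\mathcal N$ and $\lambda_n\in\mathbb R$ with
\[
I(u_n)\to\bar c,\qquad \|I'(u_n)-\lambda_nJ'(u_n)\|_*\to0 .
\]
I expect the main obstacle here to be making the deformation rigorous on $\mathcal N$ rather than on the whole space. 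To avoid it, I would first try the reduction through the radial projection $u\mapsto t_uu$ of Lemma \ref{lm2.1}. This map is a homeomorphism from the unit sphere $S$ of $H_A^1$ onto $\mathcal N$, and the functional $\Psi(w):=I(t_ww)$ is $C^1$ on $S$, with critical points and $(PS)$ sequences corresponding to those of $I$ on $\mathcal N$, in the manner of Szulkin--Weth. The class $\Gamma$ transports to a class of maps into $S$ with fixed boundary, and the standard minimax principle on the complete $C^1$ Finsler manifold $S$ then gives a $(PS)_{\bar c}$ sequence for $\Psi$, which pulls back to one for $I$.

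\textbf{Step 3: removing the multiplier and concluding.} If Step 2 is done on $\mathcal N$ with multipliers, I finish as in Lemma \ref{lm3.6}. First, $\bar c+o_n(1)=\frac14\|u_n\|_A^2$, so $\{u_n\}$ is bounded. Testing the almost-criticality with $u_n$ and using $\langle J'(u_n),u_n\rangle=-2\|u_n\|_A^2$ together with $\|u_n\|_A^2\to 4\bar c>0$ gives $\lambda_n\to0$. Since $\|J'(u_n)\|_*\le C$ on bounded sets, it follows that $I'(u_n)\to0$ in $H^*$.

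So $\{u_n\}$ is a $(PS)_{\bar c}$ sequence with $\bar c\in (c_{\infty},c_{\infty}+\min\{c_{\infty},\tau\})$. Lemma \ref{lm4.3}, which applies because equation \eqref{eqs1.1.0} has no ground state solution, then gives a solution $u$ with $I(u)=\bar c>c_\infty=c$, i.e.\ a high energy solution.
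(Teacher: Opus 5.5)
Your proposal is correct and follows essentially the same route as the paper. The chain is the same: the strict gap $\max_{\partial B_{R_0}} I\circ f<\frac{c+c_0}{2}<c_0\le\bar c$ from Lemmas \ref{lm4.4}, \ref{lm4.5} and \eqref{c}; the general minimax principle on $\mathcal N$, which gives $w_n$ with $I(w_n)\to\bar c$ and $\|I'(w_n)-\lambda_nJ'(w_n)\|_*\to0$ (the paper uses \cite[Proposition 5.12]{W} to identify the tangential norm with $\min_{\lambda}\|I'(w_n)-\lambda J'(w_n)\|_*$); removal of the multiplier exactly as in Lemma \ref{lm3.6}; and finally Lemma \ref{lm4.3}. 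Your optional Szulkin--Weth reduction to the unit sphere is a valid and somewhat more self-contained justification of the minimax step on $\mathcal N$, which the paper simply cites.
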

\noindent{\bf Proof.} %refer to [Shan, CVPDE] proof of Theorem 1.3
According to Lemmas \ref{lm4.3} and \ref{lm4.6}, to show the existence of a high energy solution for equation \eqref{eqs1.1.0}, it suffices to prove that $I$ has a $(PS)_{\bar{c}}$ sequence. Using \eqref{c}, Lemmas \ref{lm4.4} and \ref{lm4.5}, we get
\begin{align*}
        \max_{z\in \partial B_{R_0}}I(f(z))<\frac{c+c_0}{2}<c_0\leq \bar{c}.
\end{align*}
Due to the general  minimax principle \cite{W}, there exists $\{w_n\}\subset \mathcal{N}$ such that
\begin{align}
\label{4.25}
        I(w_n)\to \bar{c}, \ \|I'(w_n)\| _{**}\to 0\ \text{as}\ n\to +\infty,
\end{align}
 where
 \begin{align*}
         \|I'(w)\|_{**}:=&\sup_{\psi \in T_{w}\mathcal{N} \atop \|\psi\|_A=1} \left\langle I'(w),\psi \right\rangle ,\\
         T_{w}\mathcal{N}:=&\{\psi \in H_A^1(\R^3,\mathbb{C})\ | \ \left\langle J'(w),\psi \right\rangle=0\}.
 \end{align*}
 In light of \eqref{4.25} and \cite[Proposition 5.12]{W}, one has
 \begin{align*}%\label{4.27}
         o_n(1)=\|I'(w_n)\| _{**}=\min_{\lambda \in \R}\|I'(w_n)-\lambda J'(w_n)\|_*
         =\|I'(w_n)-\lambda_n J'(w_n)\|_*.
 \end{align*}
 Following a similar argument as in Lemma \ref{lm3.6}, we obtain that $\{w_n\}$ is a $(PS)_{\bar{c}}$ sequence of $I$. Then, by Lemma \ref{lm4.3}, equation \eqref{eqs1.1.0} admits a high energy solution $u$ with $I(u)=\bar{c}$.
 This completes the proof.
\qed 

\noindent\textbf{Proof of Theorem \ref{th1.4}.}
According to Lemmas \ref{lm3.6} and \ref{lm3.12}, we get Theorem \ref{th1.4}.
\qed

\section{Proof of Theorems \ref{th1.1}-\ref{th1.2}}\label{s4}
In this section, we prove Theorems \ref{th1.1}-\ref{th1.2}.
First, we give the proof of Theorem \ref{th1.1}.

\noindent\textbf{Proof of Theorem \ref{th1.1}.}
Suppose $(A)$, $(V)$ and $(H_2)$ hold. From Lemma \ref{lm3.6}, it suffices to prove $c<c_{\infty}$. 
%refer to [Shan, CVPDE] Lemma 2.2
% To begin with, we claim there exists $\delta >0$ such that
% \begin{align}%\label{2.0}
%         J(u)>0\ \ \text{for}\ \ 0<\|u\|\leq \delta.
% \end{align}
% According to the Hardy-Littlewood-Sobolev inequality and Sobolev inequality, we get
% \begin{align*}
%         \int _{\mathbb{R}^3}\Phi_u u^2 dx\leq C\|u\|_{L^{\frac{12}{5}}(\mathbb{R} ^3)}^4\leq C\|u\|^4,
% \end{align*}
% which together with \eqref{2.1}, implies
% \begin{align*}
%         J(u)=\|u\|^2-\int_{\R^3}\left[\Phi_u u^2+\Phi_0 u^2 \right]dx
%         \geq \frac{1}{2}\|u\|^2-C\|u\|^4.
% \end{align*}
% Thus, the claim holds.
% We deduce from \eqref{2.0} that
% \begin{align}\label{2.2.0}
%         \|u\|>\delta \ \ \text{for each}\ \ u\in \mathcal{N},
% \end{align}
% which together with \eqref{2.1}, gives
% \begin{align}\label{2.2}
%         I(u)=I(u)-\frac{1}{4}J(u)=\frac{1}{4}\left(\|u\|^2-\int _{\mathbb{R}^3}\Phi_0 u^2 dx\right)
%         \geq \frac{1}{8}\|u\|^2>\frac{1}{8}\delta^2, \ u\in \mathcal{N}.
% \end{align}
% Therefore, $c\geq \frac{1}{8}\delta^2>0$.
%It follows from $(H_2)$ that there exists $x_0\in \R^3$ such that $|A(x_0)|^2+V(x_0)-V_{\infty}<0$. Then, by $A$ and $V$ are continuous, there exists $r>0$ such that for all $x\in B(x_0,r)$, we have $|A(x)|^2+V(x)-V_{\infty}<0$. 
Recall that $u_{\infty}$ is a positive ground state solution of equation \eqref{1.8.2}, so we have $\int_{\R^3}(|A(x)|^2+V(x)-V_{\infty})u_{\infty}^2dx<0$. Then
\begin{align*}
        J(u_{\infty})=J_{\infty}(u_{\infty})+\int_{\R^3}\left(|A(x)|^2+V(x)-V_{\infty}\right)u_{\infty}^2dx<J_{\infty}(u_{\infty})=0.
\end{align*}
According to Lemma \ref{lm2.1}, there exists $t\in (0,1)$ such that $tu_{\infty}\in \mathcal{N}$. Consequently,
\begin{align*}
        c\leq I(tu_{\infty})=\frac{t^4}{4}\int _{\mathbb{R}^3}\Phi_{|u_{\infty}|} u_{\infty}^2 dx
        <\frac{1}{4}\int _{\mathbb{R}^3}\Phi_{|u_{\infty}|} u_{\infty}^2 dx=I_{\infty}(u_{\infty})=c_{\infty}.
\end{align*}
We complete the proof.
\qed

\noindent\textbf{Proof of Theorem \ref{th1.2}.}
Assume $(A)$, $(V)$ and $(H_3)$ hold. Suppose by contradiction that $u\in \mathcal{N}$ is a minimizer of $c$, namely, $I(u)=c$ and $J(u)=0$.
% Moreover, by Lemma \ref{lem:1}, we know there exist $\theta_0\in \R$ and $x_0\in \R^3$ such that $u(x)=e^{i\theta_0}u_{\infty}(x-x_0)$. So, 
% \begin{align*}
%     &\|\nabla_A u\|_{L^2(\R^3)}^2=\int_{\R^3}|-i\nabla u+Au|^2dx
%     =\int_{\R^3}(-i\nabla u+Au)(i\overline{\nabla u}+A\overline{u})dx\\
%     =&\int_{\R^3}\left(|\nabla u|^2+|Au|^2\right)dx+i\int_{\R^3}(A u\cdot \overline{\nabla u}-\nabla uA\overline{u})dx
%     =\|\nabla u\|_{L^2(\R^3)}^2+\|Au\|_{L^2(\R^3)}^2.
% \end{align*} 
%First, we claim $c=c_{\infty}$. %By Lemma \ref{lm3.3}, we know $c\le c_{\infty}$. Then we are going to prove $c\ge c_{\infty}$. 
%Indeed, for any $u\in \mathcal{N}$, it holds
% Moreover, recall \eqref{nor} that 
% \[
% |\nabla_A u|^2\ge |\nabla |u||^2, \ \text{ for all } u\in H_A^1(\R^3,\C).
% \]
Thus, combining $(H_3)$ and \eqref{nor}, we have
\begin{align*}
        J_{\infty}(|u|)=J(u)+\int_{\R^3}(|\nabla |u||^2-|\nabla_A u|^2)dx +\int_{\R^3}(V_{\infty}-V(x))|u|^2dx<J(u)=0.
\end{align*}
It follows from Lemma \ref{lm2.0.1} that there exists $t\in (0,1)$ such that $t|u|\in \mathcal{N}_{\infty}$, which yields
\begin{align*}%\label{3.01}
        c_{\infty}\leq I_{\infty}(t|u|)=\frac{t^4}{4}\int_{\R^3}\Phi_{|u|} |u|^2dx<\frac{1}{4}\int_{\R^3}\Phi_{|u|} |u|^2dx=I(u)=c.
\end{align*}
This is a contradiction with Lemma \ref{lm3.3}. 
Consequently, equation \eqref{eqs1.1.0} has no ground state solution. %$c_{\infty}\leq \inf \big\{I(u)\ |\ u\in \mathcal{N} \big\}=c$, which, together with Lemma \ref{lm3.3}, yields $c=c_{\infty}$.
%By \eqref{3.01}, we have $c_{\infty}<I(u)=c$, which contradicts $c=c_{\infty}$.
\qed

%\textbf{The proof of Theorem \ref{th1.3}.} Assume $(A)$, $(V)$, $(H_1)$  and $(H_3)$ hold. Combining Theorem \ref{th1.2} and Lemma \ref{lm3.12}, we obtain Theorem \ref{th1.3}.
%\qed

\smallskip

\noindent{\bf{Data availability statement}}  

No new data was created or analysed in this study.
%Data sharing is not applicable to this article as no datasets were generated or analysed during the current study.
\smallskip

\noindent{\bf{Acknowledgments}} 

This work is supported by National Natural Science Foundation of China (No. 12471107).
\smallskip

\noindent{\bf{Conflict of interest statement}}  

All authors declare that they have no conflict of interest.
%The authors declare that there is no conflict of interest.

\end{document}